\tikzset{
sedge/.append style={shorten <=9pt, shorten >=9pt}
}
\tikzset{
  @arc through/.style 2 args={
    to path={
      \pgfextra
        \pgfextract@process\pgf@tostart{\tikz@scan@one@point\pgfutil@firstofone(\tikztostart)\relax}%
        \pgfextract@process\pgf@tothrough{\tikz@scan@one@point\pgfutil@firstofone#1}%
        \pgfextract@process\pgf@totarget{\tikz@scan@one@point\pgfutil@firstofone(\tikztotarget)\relax}%
        \pgfextract@process\pgf@topointMidA{\pgfpointlineattime{.5}{\pgf@tostart}{\pgf@tothrough}}%
        \pgfextract@process\pgf@topointMidB{\pgfpointlineattime{.5}{\pgf@totarget}{\pgf@tothrough}}%
        \pgfextract@process\pgf@tocenter{%
          \pgfpointintersectionoflines{\pgf@topointMidA}
            {\pgfmathrotatepointaround{\pgf@tothrough}{\pgf@topointMidA}{90}}
            {\pgf@topointMidB}{\pgfmathrotatepointaround{\pgf@tothrough}{\pgf@topointMidB}{90}}}%
        \pgfcoordinate{arc through center}{\pgf@tocenter}%
        \pgfpointdiff{\pgf@tocenter}{\pgf@tostart}%
        \pgfmathveclen@{\pgfmath@tonumber\pgf@x}{\pgfmath@tonumber\pgf@y}%
        \edef\pgf@toradius{\pgfmathresult pt}
        \pgfmathanglebetweenpoints{\pgf@tocenter}{\pgf@tostart}%
        \let\pgf@tostartangle\pgfmathresult
        \pgfmathanglebetweenpoints{\pgf@tocenter}{\pgf@totarget}%
        \let\pgf@toendangle\pgfmathresult
        \ifdim\pgf@tostartangle pt>\pgf@toendangle pt\relax
          \pgfmathsetmacro\pgf@tostartangle{\pgf@tostartangle-360}%
        \fi
        #2%
          \pgfmathsetmacro\pgf@toendangle{\pgf@toendangle-360}%
        \fi
      \endpgfextra
      arc [radius=+\pgf@toradius, start angle=\pgf@tostartangle, end angle=\pgf@toendangle] \tikztonodes
    }},
  arc through ccw/.style={@arc through={#1}{\iffalse}},
  arc through cw/.style={@arc through={#1}{\iftrue}},
}
\tikzset{middlearrow/.style={
        decoration={markings,
            mark= at position 0.5 with {\arrow{#1}} ,
        },
        postaction={decorate}
    }
}
\def\th@plain{\slshape}                                        %
\newcommand{\Cbb}{\mathbb{C}}
\newcommand{\Qbb}{\mathbb{Q}}
\newcommand{\Rbb}{\mathbb{R}}
\newcommand{\Zbb}{\mathbb{Z}}
\newcommand{\Dcal}{\mathcal{D}}
\newcommand{\Hcal}{\mathcal{H}}
\newcommand{\Jcal}{\mathcal{J}}
\newcommand{\Kcal}{\mathcal{K}}
\newcommand{\Lcal}{\mathcal{L}}
\newcommand{\Scal}{\mathcal{S}}
\newcommand{\abf}{\mathbf{a}}
\newcommand{\bbf}{\mathbf{b}}
\newcommand{\Afrak}{\mathfrak{A}}
\newcommand{\ud}{\,\mathrm{d}}
\newcommand{\p}{_{\ge0}}
\newcommand{\pp}{_{>0}}
\newcommand{\m}{^{-1}}
\newcommand{\bigcupdot}{\mathop{\dot\bigcup}}
\newcommand{\argomento}{\operatorname{--}}
\newcommand{\ooi}{[0,1)}
\newcommand{\abs}[1]{\lvert#1\rvert}
\newcommand{\norm}[1]{\lVert#1\rVert}
\newcommand{\newword}[1]{\textsl{#1}}
\newcommand{\angles}[1]{\langle #1 \rangle}
\newcommand{\floor}[1]{\lfloor #1 \rfloor}
\newcommand{\set}[1]{\{ #1 \}}
\newcommand{\cppvector}[2]{\bigl(\begin{smallmatrix}#1\\#2\end{smallmatrix}\bigr)}
\newcommand{\ppmatrix}[4]{\bigl(\begin{smallmatrix}#1&#2\\#3&#4\end{smallmatrix}\bigr)}
\newcommand{\bbmatrix}[4]{\bigl[\begin{smallmatrix}#1&#2\\#3&#4\end{smallmatrix}\bigr]}
\DeclareMathOperator{\Gal}{Gal}
\DeclareMathOperator{\GL}{GL}
\DeclareMathOperator{\OO}{O}
\DeclareMathOperator{\SOO}{SO}
\DeclareMathOperator{\PP}{P}
\DeclareMathOperator{\PSL}{PSL}
\DeclareMathOperator{\PSU}{PSU}
\DeclareMathOperator{\SL}{SL}
\DeclareMathOperator{\SO}{SO}
\DeclareMathOperator{\tr}{tr}
\DeclareMathOperator{\im}{im}
\DeclareMathOperator{\sgn}{sgn}
\DeclareMathOperator{\ord}{ord}
\DeclareMathOperator{\arccosh}{arccosh}
\theoremstyle{plain}
\newtheorem{theorem}{Theorem}[section]
\newtheorem{lemma}[theorem]{Lemma}
\theoremstyle{definition}
\newtheorem{definition}[theorem]{Definition}
\newtheorem{remark}[theorem]{Remark}
\newtheorem{example}[theorem]{Example}
\newtheorem{notation}[theorem]{Notation}
\newtheorem{convention}[theorem]{Convention}
\DeclareMathOperator{\mfp}{mfp}
\DeclareMathOperator{\arclength}{arclength}
\DeclareMathOperator{\arccot}{arccot}
\newcommand{\diag}{\operatorname{diagonal}}
\newcommand{\sm}[1]{\mathscr{#1}}
\numberwithin{equation}{section}
\begin{document}

\bibliographystyle{plain}

\sloppy

\title[Billiards on pythagorean triples]{Billiards on pythagorean triples\\
and their Minkowski functions}

\author[]{Giovanni Panti}
\address{Department of Mathematics, Computer Science and Physics\\
University of Udine\\
via delle Scienze 206\\
33100 Udine, Italy}
\email{giovanni.panti@uniud.it}

\begin{abstract}
It has long been known that the set of primitive pythagorean triples can be enumerated by descending certain ternary trees. We unify these treatments by considering hyperbolic billiard tables in the Poincar\'e disk model. Our tables have $m\ge3$ ideal vertices, and are subject to the restriction
that reflections in the table walls are induced by matrices in the triangle group $\PSU^\pm_{1,1}\Zbb[i]$. The resulting billiard map $\widetilde B$ acts on the de Sitter space $x_1^2+x_2^2-x_3^2=1$, and has a natural factor $B$ on the unit circle, the pythagorean triples appearing as the $B$-preimages of fixed points. We compute the invariant densities of these maps, and prove the Lagrange and Galois theorems: A complex number of unit modulus has a preperiodic (purely periodic) $B$-orbit precisely when it is quadratic (and isolated from its conjugate by a billiard wall) over $\Qbb(i)$.

Each $B$ as above is a $(m-1)$-to-$1$ orientation-reversing covering map of the circle, a property shared by the group character $T(z)=z^{-(m-1)}$. We prove that there exists a homeomorphism $\Phi$, unique up to postcomposition with elements in a dihedral group, that conjugates $B$ with~$T$; in particular $\Phi$ ---whose prototype is the classical Minkowski question mark function--- establishes a bijection between the set of points of degree $\le2$ over $\Qbb(i)$ and the torsion subgroup of the circle. We provide an explicit formula for $\Phi$, and prove that $\Phi$ is singular
and H\"older continuous with exponent $\log(m-1)$ divided by the maximal periodic mean free path in the associated billiard table.
\end{abstract}

\thanks{\emph{2020 Math.~Subj.~Class.}: 37D40; 11J70.}
\thanks{The author is partially supported by the research project 
SiDiA of the University of Udine.}

\maketitle

\section{Introduction}\label{ref1}

Rational points in the real projective line $\PP^1\Rbb$ involve two integers, a numerator and a denominator; we can enumerate them by reversing the euclidean algorithm or ---equivalently--- taking inverse branches of continued fraction maps.
Rational points in the unit circle $S^1$ involve three integers, the two legs and the hypotenuse of a pythagorean triangle. As the line and the circle can be mutually parametrized with preservation of rational points, the complexity of the enumeration is the same, and there is a line of work (starting from~\cite{berggren34}, and running through \cite{barning63}, \cite{cassarpaia90}, \cite{alperin05},
\cite{miller13}, \cite{cha_et_al18} and references therein) describing how pythagorean triples can be generated by descending trees.

Ascending the same trees amounts to iterating continued fraction maps, and in~\cite{romik08} Romik analyzes one such map, relating it to the geodesic flow on the three-punctured sphere. It turns out that Romik's map can also be seen as the Gauss map of even continued fractions; see~\cite[\S4]{aaronsondenker99}, \cite[\S5]{cha_et_al18}, \cite[\S2]{bocalinden18} for various developments.

Although there is a birational bijection with rational coefficients between 
the line and the circle, continued fraction maps on the two spaces are not exactly the same thing. Indeed, the rational symmetry group of the projective line is the extended modular group $\PSL^\pm_2\Zbb$, while that of the circle is $\SOO_{2,1}\Zbb$, the stabilizer of the Lorentz form inside $\SL_3\Zbb$. When embedded in a larger ambient group ---say $\PSL^\pm_2\Rbb$---
they appear as the $(2,3,\infty)$ and the $(2,4,\infty)$ extended triangle groups, and neither is a subgroup of the other (of course, they are commensurable).

In this paper we develop continued fraction maps (of the ``slow'' type, that is with parabolic fixed points) directly on the circle, as factors of billiard maps determined by ideal polygons in the hyperbolic plane.
We summarize our main results as follows:
\begin{itemize}
\item Let~$D$ be a polygon in the Poincar\'e disk having $m\ge3$ vertices, all at the boundary at infinity~$S^1$. Let $B:S^1\to S^1$ be the map that sends the interval between two vertices to the union of the remaining intervals via reflection in the corresponding polygon side. Let $T$ be the group character $z\mapsto z^{-(m-1)}$. Then $B$ and $T$ are conjugate by an essentially unique homeomorphism $\Phi$, which provides a bijection between the set of points of degree at most $2$ over $\Qbb(i)$ and the torsion subgroup of $S^1$. The homeomorphism $\Phi$ is singular and H\"older continuous, of exponent $\log(m-1)$ divided by the maximal mean free path (see Definition~\ref{ref41})
of periodic trajectories in the hyperbolic billiard determined by~$D$.
\end{itemize}
The route leading to the above statement is somehow long; we offer two justifications.
\begin{enumerate}
\item The end result is a flexible and applicable tool. Indeed, the maximal mean free path referred to above equals twice the logarithm of the joint spectral radius of the set~$\Sigma$ of matrices expressing reflections in the billiard walls. When the vertices of $D$ determine a unimodular partition of $S^1$ (an arithmetical condition explained in~\S\ref{ref40}), this joint spectral radius can often be explicitly computed; see Example~\ref{ref45}.
\item Along that route we encounter fair landscapes.
\end{enumerate}

We describe our route: in~\S\ref{ref2} we 
determine finite sets of reflections generating the orthogonal group $\OO_{2,1}\Zbb$ and its subgroups $\SOO_{2,1}\Zbb$ and $\OO^\uparrow_{2,1}\Zbb$, the latter being the stabilizer of the upper sheet of the hyperboloid $x_1^2+x_2^2-x_3^2=-1$. 
Then, as a warmup, in~\S\ref{ref9} we review the construction of the Romik map using our formalism.
In~\S\ref{ref11} we provide explicit $\PSL^\pm_2\Rbb$-equivariant bijections between the homogeneous space $\PSL_2\Rbb/\set{\text{diagonal matrices}}$, the de Sitter space $x_1^2+x_2^2-x_3^2=1$, the space of oriented geodesics in the hyperbolic plane, and that of quadratic forms of discriminant~$1$. These correspondences are known, but since they appear scattered in the literature and some care is required to extend the acting group from the usual $\PSL_2\Rbb$ to the full $\PSL^\pm_2\Rbb$, our brief self-contained treatment in Theorem~\ref{ref13} may have some value.
In~\S\ref{ref40} we treat unimodular partitions of the circle; a reader not interested in arithmetical issues may safely skip Theorems~\ref{ref15} and~\ref{ref17}.

The preliminaries being over, we introduce in~\S\ref{ref3} our continued fraction maps~$B$ as factors of billiard maps $\widetilde B$ associated to ideal polygons whose vertices form a unimodular partition of the circle. Reflections in the table walls are expressed by elements of
$\PSU_{1,1}^\pm\Zbb[i]$ ---which we naturally take as matrices--- in
the Poincar\'e model, and by matrices in $\OO^\uparrow_{2,1}\Zbb$ in the Klein model.
Here the de Sitter space plays a twofold r\^ole, as the phase space of $\widetilde B$
as well as the space of shrinking intervals, this double nature being reflected in a double action of $\PSL^\pm_2\Rbb$; see Remark~\ref{ref44}.
In~\S\ref{ref5} we use the bijections in~\S\ref{ref11} to characterize the natural extension and the absolutely continuous invariant measure of~$B$. In~\S\ref{ref6} we show that the map~$B$ and the extended fuchsian group generated by~$\Sigma$ are orbit-equivalent, and prove the following statement, which combines the classical Lagrange and Galois theorems.  A complex number of unit modulus is quadratic over $\Qbb(i)$ if and only if its $B$-orbit is eventually periodic; moreover, if this is the case, then the conjugate point has the reverse period, and the two points are purely periodic precisely when they are separated by a billiard wall.

In~\S\ref{ref4} we introduce the conjugacy alluded to above. It is a natural conjugacy; indeed, $B$ is an $(m-1)$-to-$1$ orientation-reversing covering map of the circle, a topological property shared by precisely one group character,
namely $T(z)=z^{-(m-1)}$.
We thus have a ``linearized'' version of a continued fraction map, precisely as the tent map on $[0,1]$ is a linearized version of the Farey map. It turns out (Lemma~\ref{ref36}) that the natural symbolic coding of points via $B$, as well as the analogous coding via $T$, characterizes the ternary betweenness relation on the circle. Since the latter relation determines the circle topology, we obtain in Theorem~\ref{ref34} that $B$ and $T$ are conjugate by a homeomorphism $\Phi$, unique up to 
postcomposition with elements of the dihedral group with $2m$ elements. This homeomorphism is the analogue of the classical Minkowski question mark function~\cite{denjoy38}, \cite{salem43}, \cite{jordansahlsten16}, which conjugates the Farey map with the tent map. We provide in Theorem~\ref{ref37} an explicit expression for~$\Phi$ analogous to the Denjoy-Salem
formula~\cite[p.~436]{salem43}
for the question mark function, and show in Examples~\ref{ref31} and~\ref{ref47} how the arithmetic properties of $B$ and $T$ are intertwined by~$\Phi$. In Theorem~\ref{ref33} we provide an ergodic-theoretic proof of the fact that~$\Phi$ has zero derivative at Lebesgue-all points.

In the final Theorem~\ref{ref43} we complete the proof of the connection sketched above between 
the joint spectral radius of~$\Sigma$ and the
H\"older exponent of~$\Phi$. In all instances we examined the Lagarias-Wang finiteness conjecture (\cite{lagariaswang95}, see \S\ref{ref38}) turned out to be true for~$\Sigma$, and a maximizing periodic billiard trajectory was easily guessed and verified. It is plausible that the conjecture holds for all billiard tables determined by unimodular partitions of the circle, and we leave this as an interesting open problem.

\section{Notation and preliminaries}\label{ref2}

Since we treat various spaces of matrices, we will distinguish them
notationally, by using boldface for $3\times3$ matrices and lightface for $2\times2$ ones.
Points in $\Rbb^3$ are written in boldface and are always column vectors,
although we may write $\bm{x}=(x_1,x_2,x_3)$ for typographical reasons.
We will use square or round brackets for vectors and matrices, according whether we are in a projective setting (that is, up to multiplication by nonzero scalars) or in a
linear-algebra one. Zero entries in matrices are replaced by blank spaces.

Let
\[
\bm{L}=\begin{pmatrix}
1 & & \\
& 1 & \\
& & -1
\end{pmatrix}
\]
be the matrix of the three-variable Lorentz quadratic form, and let $\angles{\bm{x},\bm{y}}=\bm{x}^\top \bm{L} \bm{y}$ be the corresponding symmetric bilinear map.
The upper sheet $\Lcal=\set{\bm{x}:\angles{\bm{x},\bm{x}}=-1,\,x_3>0}$ of the $2$-sheeted hyperboloid $\angles{\bm{x},\bm{x}}=-1$ is one of the standard models of the hyperbolic plane, other models being the upper halfplane $\Hcal=\set{z\in\Cbb:
\im z>0}$, the Klein disk $\Kcal=\set{[x_1,x_2,x_3]\in\PP^2\Rbb:x_1^2+x_2^2<x_3^2}$, and the Poincar\'e disk $\Dcal=\set{z\in\Cbb:\abs{z}<1}$; we refer the reader to~\cite{cannon_et_al97} for an enjoyable introduction to hyperbolic geometry.
We need explicit bijections between these models, so we introduce a fifth auxiliary model, namely the upper hemisphere $\Jcal=\set{\bm{x}\in\Rbb^3:x_1^2+x_2^2+x_3^2=1,\,x_3>0}$, and state a lemma.

\begin{lemma}
The\label{ref8} spaces $\Lcal$, $\Hcal$, $\Kcal$, $\Dcal$, $\Jcal$ are in bijective correspondence via the commuting diagram
\[
\adjustbox{scale=1.1}{%
\begin{tikzcd}[column sep=1.5em]
& \Lcal \arrow[dl,"\pi"'] \arrow[d,"\tau_0"'] \arrow[dr,"\eta"] & \\
\Kcal\arrow[r,"\upsilon"'] & \Jcal\arrow[r,"\mu"']\arrow[d,"\tau"'] & \Hcal\arrow[dl,"C"]\\
& \Dcal &
\end{tikzcd}
}
\]
where
\begin{itemize}
\item $\pi:\Rbb^3\setminus\set{0}\to\PP^2\Rbb$ is the natural quotient map,
\item $\tau_0$ is the stereographic projection through $(0,0,-1)$,
\item $\eta(\bm{x})=(x_1+i)/(x_3-x_2)$,
\item $\upsilon([x_1,x_2,x_3])=\bigl(x_1/x_3,x_2/x_3,(x_3^2-x_1^2-x_2^2)^{1/2}/x_3\bigr)$ is the ``vertical'' projection,
\item $\mu$ is the stereographic projection through $(0,1,0)$ to the halfplane $\set{x_2=0,x_3>0}$, followed by the obvious identification of the latter with~$\Hcal$,
\item $\tau$ is the stereographic projection through $(0,0,-1)$ to the disk $\set{x_1^2+x_2^2<1,x_3=0}$, followed by the obvious identification of the latter with $\Dcal$,
\item $C$ is the M\"obius transformation $z\mapsto C*z=(z-i)/(-iz+1)$ induced by the Cayley matrix $C=2^{-1/2}\bbmatrix{1}{-i}{-i}{1}\in\PSL_2\Cbb$ (as customary, we blur the distinction between matrices and the maps they induce).
\end{itemize}
These correspondences extend to the respective ideal boundaries.
\end{lemma}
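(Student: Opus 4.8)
The plan is to verify the diagram one arrow at a time: that each of the seven maps is well defined with the indicated source and target, that each is a bijection, that the three elementary $2$-cells $\Lcal\Kcal\Jcal$, $\Lcal\Jcal\Hcal$, $\Jcal\Hcal\Dcal$ commute (whence commutativity of the whole diagram is formal), and that everything extends to the ideal boundaries. For well-definedness the only delicate point is sign conventions. On $\Lcal$ one has $x_3>\abs{x_2}$, since $x_3>0$ and $x_3^2=1+x_1^2+x_2^2>x_2^2$; hence $x_3-x_2>0$, so $\eta(\bm x)=(x_1+i)/(x_3-x_2)$ lies in $\Hcal$ with $\im\eta(\bm x)=1/(x_3-x_2)>0$, and $\angles{\bm x,\bm x}=-1$ gives $x_1^2+x_2^2<x_3^2$, so $\pi(\bm x)\in\Kcal$. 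A point of $\Kcal$ has a unique representative with $x_3>0$, for which $x_3^2-x_1^2-x_2^2>0$, so $\upsilon$ is well defined with positive last coordinate; the stereographic projections $\tau_0,\mu,\tau$ and the Cayley map $C$ are handled by the usual pole-avoidance.

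For bijectivity I would write down explicit inverses. The map $\pi|_\Lcal$ is inverted by sending a $\Kcal$-point (normalized to $x_3>0$) to $(x_3^2-x_1^2-x_2^2)^{-1/2}\bm x$; the map $\upsilon$ is inverted by $(y_1,y_2,y_3)\mapsto[y_1,y_2,1]$; and $\eta$ is inverted by recovering $x_3-x_2=1/\im z$ and $x_1=\re z/\im z$ from $z\in\Hcal$ and then using $\angles{\bm x,\bm x}=-1$ to get $x_3=(1+\abs z^2)/(2\im z)$, a single point of $\Lcal$. The maps $\tau_0,\mu,\tau$ and $C$ are classically invertible.

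Commutativity becomes a short computation once the projections are written out: parametrizing the line through the relevant pole and intersecting with the target surface gives $\tau_0(\bm x)=(x_1/x_3,x_2/x_3,1/x_3)$, $\mu(y_1,y_2,y_3)=(y_1+iy_3)/(1-y_2)$ and $\tau(y_1,y_2,y_3)=(y_1+iy_2)/(1+y_3)$. Then $\upsilon\circ\pi=\tau_0$ (using $x_3^2-x_1^2-x_2^2=1$ on $\Lcal$) and $\mu\circ\tau_0=\eta$ are immediate, while $C\circ\mu=\tau$ reduces, after clearing the denominator in $C*z=(z-i)/(-iz+1)$, to the identity
\[
\frac{y_1+i(y_2+y_3-1)}{(1+y_3-y_2)-iy_1}=\frac{y_1+iy_2}{1+y_3},
\]
valid whenever $y_1^2+y_2^2+y_3^2=1$ (cross-multiply and compare real and imaginary parts).

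Finally, $\partial\Lcal$ is the projectivized null cone $\set{[\bm x]:\angles{\bm x,\bm x}=0}$, $\partial\Kcal$ the conic $\set{x_1^2+x_2^2=x_3^2}$ in $\PP^2\Rbb$, $\partial\Jcal$ the equator $\set{x_3=0}$, $\partial\Hcal=\PP^1\Rbb$ and $\partial\Dcal=S^1$; each of the seven maps and its inverse is given by the same rational formula in a neighborhood of the boundary and hence extends continuously to a bijection between boundaries (on the null cone $x_3-x_2=0$ forces $x_1=0$, so $\eta$ sends the ideal point $[0:1:1]$ to $\infty\in\PP^1\Rbb$, and so on), and the extended diagram commutes by continuity and density of the interiors. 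I expect the only real obstacle to be the bookkeeping --- which pole each stereographic projection uses, which hemisphere or half-space is the target, and the choice of projective representative; there is no conceptual difficulty, and every displayed identity is polynomial once denominators are cleared.
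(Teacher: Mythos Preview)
Your proposal is correct and follows essentially the same approach as the paper: verify the three triangles $\upsilon\circ\pi=\tau_0$, $\mu\circ\tau_0=\eta$, $C\circ\mu=\tau$ by direct computation using the explicit formulas for the stereographic projections, then extend to boundaries. The only notable difference is that the paper handles the ideal boundary of $\Lcal$ more formally, by passing to the projective closure $x_1^2+x_2^2-x_3^2+x_4^2=0$ in $\PP^3\Rbb$ and checking that the explicit boundary formulas agree with limits along paths in $\Lcal$; your ``same rational formula in a neighborhood of the boundary'' sketch is correct in spirit but would need this compactification to be made fully rigorous, since $\partial\Lcal$ lies at infinity.
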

\begin{proof}
The proof reduces to a commentary on the figure on
page~$70$ of~\cite{cannon_et_al97}.
The upper-left triangle commutes because $\upsilon\circ\pi$ sends $\bm{x}=(x_1,x_2,x_3)\in\Lcal$ to $\bigl(x_1,x_2,(x_3^2-x_1^2-x_2^2)^{1/2}\bigr)/x_3=(x_1,x_2,1)/x_3=(1/x_3)(x_1,x_2,x_3)+(1-1/x_3)(0,0,-1)$.
The upper-right triangle commutes because $\mu$ sends $(x_1,x_2,1)/x_3\in\Jcal$ to $x_1/(x_3-x_2)+i/(x_3-x_2)=\eta(\bm{x})$. The lower-right triangle commutes because,
given $\bm{y}\in\Jcal$,
\[
C*\bigl(\mu(\bm{y})\bigr)=
\begin{bmatrix}
1 & -i\\
-i & 1
\end{bmatrix}*\frac{y_1+y_3i}{1-y_2}=
\frac{y_1+y_2i}{1+y_3}=\tau(\bm{y}).
\]

The fact that these correspondences extend to the ideal boundaries is obvious as soon as the boundary $\partial\Lcal$ of $\Lcal$ and the maps $\pi$, $\tau_0$, $\eta$
on it are properly defined. We see $\partial\Lcal$ as the intersection of the projective closure of $\Lcal\cup(-\Lcal)$ (i.e., the variety
$x_1^2+x_2^2-x_3^2+x_4^2=0$ in $\PP^3\Rbb$) with the plane at infinity $x_4=0$, and we set
\begin{align*}
\pi([x_1,x_2,x_3,0])&=[x_1,x_2,x_3],\\
\tau_0([x_1,x_2,x_3,0])&=(x_1/x_3,x_2/x_3,0),\\
\eta([x_1,x_2,x_3,0])&=x_1/(x_3-x_2).
\end{align*}
We can then view $[x_1,x_2,x_3,0]\in\partial\Lcal$ as the limit (in the euclidean metric of an appropriate local chart) of
$\bm{x}(t)=t\bigl(x_1,x_2,(x_1^2+x_2^2+1/t^2)^{1/2}\bigr)\in\Lcal$, for $t\to+\infty$.
An easy computation shows that the $\pi$-, $\tau_0$-, $\eta$-images of 
$[x_1,x_2,x_3,0]\in\partial\Lcal$, as defined above, agree with the limits (in the euclidean metric) of $\pi(\bm{x}(t))$,
$\tau_0(\bm{x}(t))$, $\eta(\bm{x}(t))$, for $t\to+\infty$. This guarantees the required commutativity.
\end{proof}

It is well known that the orthogonal group $\OO_{2,1}\Rbb$ of the Lorentz form has four connected components, namely the component of the identity (which is a normal subgroup)
and its cosets with respect to the diagonal matrices having diagonal entries $(-1,1,1)$, $(1,1,-1)$, $(-1,1,-1)$. The union of the component of the identity with its $(-1,1,-1)$-coset is the special orthogonal group $\SOO_{2,1}\Rbb$, while its union with the $(-1,1,1)$-coset is the group $\OO_{2,1}^\uparrow\Rbb$ of all matrices that preserve $\Lcal$; equivalently, $\OO_{2,1}^\uparrow\Rbb=\set{\bm{A}\in\OO_{2,1}\Rbb:\text{the $(3,3)$-entry of $\bm{A}$ is $>0$}}$. We will write $\SOO^\uparrow_{2,1}\Rbb=\SOO_{2,1}\Rbb\cap\OO^\uparrow_{2,1}\Rbb$ for the component of the identity.

The group of isometries (including the orientation-reversing ones) of $\Hcal$ is $\PSL^\pm_2\Rbb=\set{A\in\GL_2\Rbb:\abs{\det A}=1}/\set{\pm I}$, which acts on $\Hcal$ as follows: given $A=\bbmatrix{a}{b}{c}{d}$, then $A*z$ equals $(az+b)/(cz+d)$ if $\det A=1$, and equals 
$(a\bar z+b)/(c\bar z+d)$ if $\det A=-1$.
Conjugating $\PSL^\pm_2\Rbb$ with the Cayley matrix we obtain the group
\[
\PSU^\pm_{1,1}\Cbb=\biggl\{
\begin{pmatrix}
\alpha & \beta \\
\bar\beta & \bar\alpha
\end{pmatrix}
\in\GL_2\Cbb:\bigl\vert\abs{\alpha}^2-\abs{\beta}^2\bigr\vert=1
\biggr\}
\Big/\pm I,
\]
which acts on $\Dcal$ via
\[
\begin{bmatrix}
\alpha & \beta\\
\bar\beta & \bar\alpha
\end{bmatrix}
*z=
\begin{cases}
(\alpha z+\beta)/(\bar\beta z+\bar\alpha),
&\text{if $\abs{\alpha}^2-\abs{\beta}^2=1$};\\
(\beta\bar{z}+\alpha)/(\bar\alpha\bar{z}+\bar\beta),
&\text{if $\abs{\alpha}^2-\abs{\beta}^2=-1$}.
\end{cases}
\]

We construct an isomorphic representation $\PSL^\pm_2\Rbb\to\OO^\uparrow_{2,1}\Rbb$ by identifying the vector $\bm{w}=(w_1,w_2,w_3)\in\Rbb^3$ with the matrix
\begin{equation}\label{eq5}
W=\begin{pmatrix}
-w_2+w_3 & -w_1\\
-w_1 & w_2+w_3
\end{pmatrix},
\end{equation}
on which $A\in\PSL^\pm_2\Rbb$ acts on the left by $W\mapsto(A\m)^\top WA\m$. 
This is a well defined action, independent from the lift of $A$ to $\SL^\pm_2\Rbb$, linear, and preserving the form $\angles{\bm{w},\bm{w}}=-\det W$. Computing the images of the $1$-parameter subgroups in the Iwasawa decomposition of $\PSL_2\Rbb$ provides a geometric picture of the representation, namely
\begin{equation}\label{eq2}
\begin{split}
\begin{bmatrix}
\cos(t) & -\sin(t)\\
\sin(t) & \cos(t)
\end{bmatrix}&\mapsto
\begin{pmatrix}
\cos(-2t) & -\sin(-2t) & \\
\sin(-2t) & \cos(-2t) & \\
& & 1
\end{pmatrix},\\
\begin{bmatrix}
\exp(t/2) & \\
& \exp(-t/2)
\end{bmatrix}&\mapsto
\begin{pmatrix}
1 & & \\
& \cosh(t) & \sinh(t) \\
& \sinh(t) & \cosh(t) 
\end{pmatrix},\\
\begin{bmatrix}
1 & t\\
& 1
\end{bmatrix}&\mapsto
\begin{pmatrix}
1 & -t & t \\
t & 1-t^2/2 & t^2/2 \\
t & -t^2/2 & 1+t^2/2
\end{pmatrix}.
\end{split}
\end{equation}

\begin{convention}
In\label{ref19} order to simplify notation we adopt the convention that, whenever a matrix in $\PSL^\pm_2\Rbb$ is denoted by a certain capital letter, then its image under the above representation, and its $C$-conjugate, are denoted by the same capital letter in bold and in calligraphic fonts, 
respectively. With this understanding, we give names to a few matrices that will recur throughout this paper.
\begin{equation}\label{eq13}
\begin{aligned}
J&=\begin{bmatrix}
-1 &\\
& 1
\end{bmatrix},
&\sm{J}&=\begin{bmatrix}
& -i\\
i &
\end{bmatrix},
&\bm{J}&=\begin{pmatrix}
-1 & & \\
& 1 & \\
& & 1
\end{pmatrix},\\
F&=\begin{bmatrix}
& 1 \\
1 & 
\end{bmatrix},
&\sm{F}&=\begin{bmatrix}
& 1\\
1 &
\end{bmatrix},
&\bm{F}&=\begin{pmatrix}
1 & & \\
& -1 & \\
& & 1
\end{pmatrix},\\
P&=\begin{bmatrix}
-1 & 2 \\
 & 1
\end{bmatrix},
&\sm{P}&=\begin{bmatrix}
i & 1-i\\
1+i & -i
\end{bmatrix},
&\bm{P}&=\begin{pmatrix}
-1 & -2 & 2\\
-2 & -1 & 2\\
-2 & -2 & 3
\end{pmatrix},\\
G&=\frac{1}{\sqrt2}\begin{bmatrix}
1 & 1 \\
1 & -1
\end{bmatrix},
&\sm{G}&=\frac{1}{\sqrt2}\begin{bmatrix}
& 1+i\\
1-i &
\end{bmatrix},
&\bm{G}&=\begin{pmatrix}
& 1 & \\
1 & & \\
& & 1
\end{pmatrix}.
\end{aligned}
\end{equation}
\end{convention}

Explicit computation ---which we omit--- shows that $\eta\circ\bm{A}=A\circ\eta$ on $\Lcal$, for every~$A$ in the above $1$-parameter subgroups, and also for $A=J$; therefore the identity $\eta\circ\bm{A}=A\circ\eta$ holds for every $A\in\PSL^\pm_2\Rbb$.
The action of $\OO^\uparrow_{2,1}\Rbb$ on $\Rbb^3$ descends to a projective action on $\PP^2\Rbb$ that fixes the Klein model $\Kcal$ and its boundary $\partial\Kcal$. These observations, together with Lemma~\ref{ref8}, imply that for every $A\in\PSL^\pm_2\Rbb$ the diagram
\begin{equation}\label{eq14}
\adjustbox{scale=1.2}{%
\begin{tikzcd}[column sep=1.5em]
\Kcal \arrow[r,"\tau\circ\upsilon"]\arrow[d,"\bm{A}"'] & \Dcal \arrow[d,"\sm{A}"'] &
\Hcal \arrow[l,"C"'] \arrow[d,"A"] \\
\Kcal\arrow[r,"\tau\circ\upsilon"'] & \Dcal & \Hcal\arrow[l,"C"]
\end{tikzcd}
}
\end{equation}
commutes. The analogous diagram involving the ideal boundaries of $\Kcal,\Dcal,\Hcal$ commutes as well, and actually simplifies. Indeed, the nontrivial bijection
$\tau\circ\upsilon$ reduces on $\partial\Kcal$ to the obvious identification
$[x_1,x_2,x_3]\mapsto(x_1+x_2i)/x_3$, while
$C\m\circ\tau\circ\upsilon$ reduces to
the stereographic projection through $[0,1,1]$, namely $[x_1,x_2,x_3]\mapsto x_1/(x_3-x_2)$. We will thus switch freely between $\partial\Kcal$ and $\partial\Dcal$, using $S^1$ as a neutral name for both.

Let $D$ be a polygon in $\Hcal$, bounded by $m\ge3$
geodesics $l_0,\ldots,l_{m-1}$, and having angles at vertices $\pi/e_0,\ldots,\pi/e_{m-1}$, with $e_0,\ldots,e_{m-1}$ integers $\ge2$ or $\infty$ (if the corresponding vertex lies in $\partial\Hcal$); the Gauss-Bonnet formula forces $m-2>\sum_a e_a\m$. The \newword{extended Coxeter group} associated to $D$ is the subgroup $\Gamma^\pm$ of $\PSL^\pm_2\Rbb$ generated by the reflections in the sides of $D$. It has the presentation
\[
\angles{x_0,\ldots,x_{m-1}\vert x_0^2=\cdots=x_{m-1}^2=(x_0x_1)^{e_0}=\cdots=(x_{m-1}x_0)^{e_{m-1}}=1}
\]
(with the understanding that relators $(x_ax_{a+1})^\infty$ do not appear), and $D$ is a fundamental domain for it. Its index-$2$ subgroup of orientation-preserving elements $\Gamma=\Gamma^\pm\cap\PSL_2\Rbb$ is a fuchsian group of finite covolume; see~\cite{katok10}, \cite{maclachlanreid03}. When $D$ is a triangle we write $\Delta(e_0,e_1,e_2)$ and $\Delta^\pm(e_0,e_1,e_2)$
for $\Gamma$ and $\Gamma^\pm$, referring to them as a \newword{triangle group} and an \newword{extended triangle group}, respectively (the adjective \newword{extended} stresses the fact that orientation-reversing isometries are allowed; in both cases, the action on $\Hcal$ is properly discontinuous). Note that the numbers $e_0,e_1,e_2$ determine the triangle up to isometry, and hence the groups up to conjugation. We will freely use all of the above terminology when working in other models of the hyperbolic plane.

Let us return to the Lorentz form $\angles{\argomento,\argomento}$.
We recall that, given a nonisotropic vector $\bm{w}$, the \newword{reflection}
$\bm{R_w}$ is the unique linear involution of $\Rbb^3$ that fixes pointwise the polar hyperplane $\set{\bm{x}:\angles{\bm{w},\bm{x}}=0}$ and exchanges $\bm{w}$ with $-\bm{w}$. An easy computation (of course, all of this is well known) shows that:
\begin{itemize}
\item[(i)]
\begin{equation}\label{eq24}
\bm{R_w}(\bm{x})=\bm{x}-\frac{2\angles{\bm{w},\bm{x}}}{\angles{\bm{w},\bm{w}}}\bm{w},
\end{equation}
\item[(ii)] $\bm{R_w}$ preserves $\angles{\argomento,\argomento}$,
\item[(iii)] in terms of matrices,
\begin{equation}\label{eq23}
\bm{R_w}=\bm{I}-\frac{2}{\angles{\bm{w},\bm{w}}}\bm{w}\,\bm{w}^\top\,\bm{L},
\end{equation}
\item[(iv)] $\bm{R_w}\in\OO_{2,1}^\uparrow\Rbb$ if and only if $\angles{\bm{w},\bm{w}}>0$.
\end{itemize}

\begin{notation}
\begin{itemize}
\item $\OO_{2,1}\Zbb$ (respectively, $\SOO_{2,1}\Zbb$, $\OO_{2,1}^\uparrow\Zbb$, $\SOO_{2,1}^\uparrow\Zbb$) is the intersection of $\OO_{2,1}\Rbb$ (respectively, $\SOO_{2,1}\Rbb$, $\OO_{2,1}^\uparrow\Rbb$, $\SOO_{2,1}^\uparrow\Rbb$) with $\GL_3\Zbb$.
\item $\PSL^\pm_2\Zbb=\set{A\in\PSL^\pm_2\Rbb:\text{$A$ has entries in $\Zbb$}}$.
\item $\PSU^\pm_{1,1}\Zbb[i]=\set{\sm{A}\in\PSU^\pm_{1,1}\Cbb:\sm{A}\text{ has entries in }\Zbb[i]}$.
\item $\angles{F,P,G}^+$ (and analogously for other groups generated by involutions) is the group of all products of an even number of elements in $\set{F,P,G}$.
\end{itemize}
\end{notation}

The four matrices $\bm{J},\bm{F},\bm{P},\bm{G}$ in~\eqref{eq13} are in $\OO^\uparrow_{2,1}\Zbb$; in particular they are of the form $\bm{R_w}$, for $\bm{w}$ equal to $(1,0,0)$, $(0,1,0)$, $(1,1,1)$, $(-1,1,0)$, respectively.
In~\cite{morita86} it is proved that the five reflections $\bm{J}$, $\bm{F}$, $\bm{R}_{(0,0,1)}=\operatorname{diag}(1,1,-1)$, $\bm{R}_{(1,1,0)}=\bm{J}\bm{G}\bm{J}$, $\bm{P}$ generate $\OO_{2,1}\Zbb$ (see~\cite{conradpd} for an elementary proof which avoids the theory of Kac-Moody Lie algebras); we give an independent and expanded version in the following theorem.

\begin{theorem}
We\label{ref7} have $\OO_{2,1}^\uparrow\Zbb=\angles{\bm{F}$, $\bm{P}$, $\bm{G}}$, which is isomorphic to the extended triangle group $\Delta^\pm(2,4,\infty)$;
adding $\bm{R}_{(0,0,1)}$ as a further generator we obtain the full group $\OO_{2,1}\Zbb$.
The group $\angles{\bm{F},\bm{P},\bm{J}}$ is an index-$2$ subgroup of $\OO_{2,1}^\uparrow\Zbb$, and equals $\Delta^\pm(2,\infty,\infty)$; its image $\angles{\sm{F},\sm{P},\sm{J}}$ inside $\PSU^\pm_{1,1}\Cbb$ is $\PSU^\pm_{1,1}\Zbb[i]$.
\end{theorem}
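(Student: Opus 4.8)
\emph{Plan.}
The plan is to read all four assertions off the geometry of two ideal triangles in the Klein model~$\Kcal$, where the bold matrices act projectively, and then to promote the evident inclusions to equalities. First, by~\eqref{eq23} the matrices $\bm{F}=\bm{R}_{(0,1,0)}$, $\bm{G}=\bm{R}_{(-1,1,0)}$, $\bm{P}=\bm{R}_{(1,1,1)}$, $\bm{J}=\bm{R}_{(1,0,0)}$ are the reflections in the geodesics $g_F=\set{x_2=0}$, $g_G=\set{x_1=x_2}$, $g_P=\set{x_1+x_2=x_3}$, $g_J=\set{x_1=0}$ of~$\Kcal$; two geodesics polar to spacelike $\bm{u},\bm{v}$ meet at the angle $\theta$ with $\cos^2\theta=\angles{\bm{u},\bm{v}}^2/\bigl(\angles{\bm{u},\bm{u}}\angles{\bm{v},\bm{v}}\bigr)$, and are asymptotic when this ratio equals~$1$. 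A one-line evaluation of the Lorentz form gives: $g_F,g_G$ meet at $\pi/4$; $g_G,g_P$ meet at $\pi/2$; $g_F,g_P$ are asymptotic; $g_F,g_J$ meet at $\pi/2$; and $g_J,g_P$ are asymptotic. Hence $g_F,g_G,g_P$ bound an ideal triangle~$T$ of type $(2,4,\infty)$ and $g_F,g_J,g_P$ bound an ideal triangle~$T'$ of type $(2,\infty,\infty)$; and since $\bm{G}$ interchanges $g_F$ with $g_J$ and fixes each of $g_G,g_P$ setwise, reflecting $T$ in its side $g_G$ yields $T'=T\cup\bm{G}(T)$. By the discussion of extended Coxeter groups in~\S\ref{ref2} (the Gauss--Bonnet conditions $\tfrac12+\tfrac14<1$ and $\tfrac12<1$ holding), $\Gamma^\pm_T:=\angles{\bm{F},\bm{G},\bm{P}}$ has~$T$ as a fundamental domain and is $\Delta^\pm(2,4,\infty)$, while $\Gamma^\pm_{T'}:=\angles{\bm{F},\bm{J},\bm{P}}$ has~$T'$ as a fundamental domain and is $\Delta^\pm(2,\infty,\infty)$. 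As $\bm{J}=\bm{G}\bm{F}\bm{G}\in\angles{\bm{F},\bm{G}}$ we get $\Gamma^\pm_{T'}\le\Gamma^\pm_T$, and because $T'$ is the union of the two $\Gamma^\pm_T$-translates $T$ and $\bm{G}(T)$ the index is~$2$.

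Next I would prove $\Gamma^\pm_T=\OO^\uparrow_{2,1}\Zbb$ and $\angles{\bm{F},\bm{G},\bm{P},\bm{R}_{(0,0,1)}}=\OO_{2,1}\Zbb$. The inclusions~$\subseteq$ are clear since $\bm{F},\bm{G},\bm{P}\in\OO^\uparrow_{2,1}\Zbb$. For the reverse, recall that $\bm{J},\bm{F},\bm{R}_{(0,0,1)},\bm{J}\bm{G}\bm{J},\bm{P}$ generate $\OO_{2,1}\Zbb$; as $\bm{J}=\bm{G}\bm{F}\bm{G}$, both $\bm{J}$ and $\bm{J}\bm{G}\bm{J}$ lie in $\angles{\bm{F},\bm{G}}$, so $\OO_{2,1}\Zbb=\angles{\Gamma^\pm_T,\bm{R}_{(0,0,1)}}$. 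Now $\bm{R}_{(0,0,1)}=\operatorname{diag}(1,1,-1)$ commutes with $\bm{F}$ and $\bm{G}$, and, using the conjugation rule $g\bm{R}_{\bm{w}}g^{-1}=\bm{R}_{g\bm{w}}$ together with $\bm{R}_{(0,0,1)}(1,1,1)=(1,1,-1)$, $(\bm{J}\bm{F})(1,1,1)=(-1,-1,1)$ and $\bm{J}\bm{F}=(\bm{G}\bm{F})^2\in\angles{\bm{F},\bm{G}}$, one gets $\bm{R}_{(0,0,1)}\bm{P}\bm{R}_{(0,0,1)}=\bm{R}_{(1,1,-1)}=\bm{R}_{(-1,-1,1)}=(\bm{J}\bm{F})\bm{P}(\bm{J}\bm{F})^{-1}\in\Gamma^\pm_T$. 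Thus $\bm{R}_{(0,0,1)}$ normalises $\Gamma^\pm_T$, so $\Gamma^\pm_T\trianglelefteq\OO_{2,1}\Zbb$ with quotient generated by the class of the involution $\bm{R}_{(0,0,1)}$; that class is nontrivial since $\bm{R}_{(0,0,1)}\notin\OO^\uparrow_{2,1}\Rbb\supseteq\Gamma^\pm_T$, whence $[\OO_{2,1}\Zbb:\Gamma^\pm_T]=2$. Together with $\Gamma^\pm_T\subseteq\OO^\uparrow_{2,1}\Zbb$ and $[\OO_{2,1}\Zbb:\OO^\uparrow_{2,1}\Zbb]=2$ (again witnessed by $\bm{R}_{(0,0,1)}$) this forces $\Gamma^\pm_T=\OO^\uparrow_{2,1}\Zbb$, and then $\OO_{2,1}\Zbb=\angles{\Gamma^\pm_T,\bm{R}_{(0,0,1)}}$. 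With the first paragraph this proves everything except the final clause, and shows in addition that $\angles{\bm{F},\bm{P},\bm{J}}$ is an index-$2$ subgroup of $\OO^\uparrow_{2,1}\Zbb$ equal to $\Delta^\pm(2,\infty,\infty)$.

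For the final clause I would argue as follows. By Convention~\ref{ref19}, the composite of the representation $\OO^\uparrow_{2,1}\Rbb\cong\PSL^\pm_2\Rbb$ (\S\ref{ref2}) with Cayley conjugation carries $\bm{F},\bm{J},\bm{P}$ to the matrices $\sm{F},\sm{J},\sm{P}$ of~\eqref{eq13}, which have entries in $\Zbb[i]$; hence $\angles{\sm{F},\sm{J},\sm{P}}\subseteq\PSU^\pm_{1,1}\Zbb[i]$. Each of these two groups is the disjoint union of its orientation-preserving part (the matrices with $\abs{\alpha}^2-\abs{\beta}^2=1$) with the coset $\sm{F}\cdot(\text{that part})$, so it suffices to match the two orientation-preserving parts, which I transfer to $\PSL_2\Rbb$ by the inverse Cayley conjugation $g\mapsto C^{-1}gC$. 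The orientation-preserving part of $\angles{\sm{F},\sm{J},\sm{P}}$ goes to $\angles{FJ,FP}$, and since $F,J,P$ act on $\Hcal$ as the reflections $z\mapsto1/\bar z$, $z\mapsto-\bar z$, $z\mapsto2-\bar z$, the maps $FJ$ and $FP$ are $z\mapsto-1/z$ and $z\mapsto1/(2-z)$; as $(z\mapsto-1/z)\circ(z\mapsto1/(2-z))$ is $z\mapsto z-2$, the group $\angles{FJ,FP}$ equals the theta subgroup $\Gamma_\theta=\angles{z\mapsto-1/z,\ z\mapsto z+2}$ of $\PSL_2\Zbb$, of index~$3$. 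On the other side, a direct computation yields
\[
C^{-1}\bbmatrix{\alpha}{\beta}{\bar\beta}{\bar\alpha}C=\bbmatrix{\re\alpha+\im\beta}{\re\beta+\im\alpha}{\re\beta-\im\alpha}{\re\alpha-\im\beta}\in\SL_2\Zbb,
\]
a matrix congruent mod~$2$ to $I$ or to $\bbmatrix{0}{1}{1}{0}$, and conversely every such integer matrix arises; so the image of the orientation-preserving part of $\PSU^\pm_{1,1}\Zbb[i]$ is again $\Gamma_\theta$ (this being its standard congruence description). Hence the two orientation-preserving parts coincide, and therefore $\angles{\sm{F},\sm{P},\sm{J}}=\PSU^\pm_{1,1}\Zbb[i]$.

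\emph{Main obstacle.}
Almost everything above is bookkeeping with the models of Lemma~\ref{ref8} and an application of Poincar\'e's polygon theorem; the one step carrying genuine weight is the inclusion $\OO^\uparrow_{2,1}\Zbb\subseteq\angles{\bm{F},\bm{G},\bm{P}}$, which I reduced to the quoted fact that five reflections generate $\OO_{2,1}\Zbb$. A self-contained proof must reprove that generation by a reduction algorithm: using the isotropic vector $(1,0,1)$ attached to the cusp of~$T$ as a size function, one shows that every element of $\OO^\uparrow_{2,1}\Zbb$ outside a fixed finite ``reduced'' set is strictly shrunk by one of $\bm{F},\bm{G},\bm{P}$, and then checks directly that the reduced set lies in $\angles{\bm{F},\bm{G},\bm{P}}$; this is the laborious part, and where I expect the real work to be. A secondary, purely notational, pitfall is keeping the conventions for the orientation-reversing (antiholomorphic) isometries straight when passing between $\PSU^\pm_{1,1}\Cbb$, $\PSL^\pm_2\Rbb$ and the Cayley matrix~$C$, which is where sign errors creep in.
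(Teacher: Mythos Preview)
Your proposal is correct, and takes a genuinely different route from the paper at the crucial step.

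For the identification $\OO^\uparrow_{2,1}\Zbb=\angles{\bm{F},\bm{P},\bm{G}}$, the paper works on the Fuchsian side: it observes that $\angles{F,P,G}^+=\Delta(2,4,\infty)$ sits inside the arithmetic group $\Gamma=\{A\in\PSL_2\Rbb:\bm{A}\in\SOO^\uparrow_{2,1}\Zbb\}$, invokes Singerman's theorem that any Fuchsian overgroup of a triangle group is itself a triangle group, and then appeals to Takeuchi's classification of non-cocompact arithmetic triangle groups, in which $\Delta(2,4,\infty)$ is maximal. You instead quote the Morita/Conrad generation of $\OO_{2,1}\Zbb$ by five reflections, express four of them in terms of $\bm{F},\bm{G}$, and show via your normalizer computation that $\bm{R}_{(0,0,1)}$ normalises $\angles{\bm{F},\bm{P},\bm{G}}$ with quotient of order~$2$. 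Your approach is more algebraic and entirely self-contained modulo the Morita result; the paper's is shorter but imports heavier classification theorems. You are right that, to make your argument self-contained, the work migrates to reproving the five-reflection generation---and indeed the paper explicitly states that its theorem is meant as an ``independent and expanded version'' of Morita's, so by quoting Morita you are in a sense assuming what the paper sets out to reprove.

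For the final clause, the paper shows $C^{-1}\PSU^\pm_{1,1}\Zbb[i]\,C\le\PSL^\pm_2\Zbb$, notes that $\angles{F,P,J}$ has index~$3$ there, and rules out equality with the full modular group by a trace argument (no elements of order~$3$). Your argument---identifying both orientation-preserving parts with the theta group $\Gamma_\theta$ via its congruence description---is more explicit and arguably cleaner; the parity check that the Cayley-conjugate lands in $\{I,\bigl(\begin{smallmatrix}0&1\\1&0\end{smallmatrix}\bigr)\}\bmod 2$ is exactly the content the paper's index-$3$ remark encodes.
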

\begin{proof}
We work in $\Hcal$.
Let $\Gamma=\set{A\in\PSL_2\Rbb:\bm{A}\in\SOO^\uparrow_{2,1}\Zbb}$; then, by definition, $\Gamma$ is an arithmetic fuchsian group.
We observe that $\angles{F,P,G}^+$ is
the triangle group $\Delta(2,4,\infty)$. Indeed $F,P,G$ are the reflections in the three geodesics
\begin{itemize}
\item $l_0$, whose endpoints are $1$ and $-1$;
\item $l_1$, whose endpoints are $\infty$ and $1$;
\item $l_2$, whose endpoints are $1-\sqrt2$ and $1+\sqrt2$.
\end{itemize}
These geodesics determine a triangle $D$ in $\Hcal$ with vertices at $1+i\sqrt2$ with angle $\pi/2$, at $i$ with angle $\pi/4$, and at the ideal point $1$ with angle $0$.

Clearly $\angles{F,P,G}^+$ is a subgroup of $\Gamma$, and
it is well-known that a fuchsian group containing a triangle group must itself be a triangle group~\cite[\S6]{singerman72}.
The partially ordered set of all nine non-cocompact arithmetic triangle groups has been determined by Takeuchi in~\cite{takeuchi77a}, and $\Delta(2,4,\infty)$ is maximal in it; therefore $\Gamma=\angles{F,P,G}^+$. Adding $F$ as a further generator to $\angles{F,P,G}^+$ we obtain $\angles{F,P,G}=\set{A\in\PSL^\pm_2\Rbb:\bm{A}\in\OO^\uparrow_{2,1}\Zbb}$, as claimed.

For the second statement, observe that replacing the generator $G$ with $J$ means replacing $l_2$ with the geodesic $l'_2$ whose endpoints are $0$ and $\infty$. The polygon determined by $l_0,l_1,l'_2$ is the triangle $D'=D\cup G[D]$, with angles $\pi/2$ at $i$, and $0$ at~$1$ and at $\infty$; hence $\angles{F,P,J}$ is the extended triangle group $\Delta^\pm(2,\infty,\infty)$. 
Clearly $\angles{\sm{F},\sm{P},\sm{J}}\le\PSU^\pm_{1,1}\Zbb[i]$, and by computing
\[
C\m
\begin{bmatrix}
a+bi & c+di \\
c-di & a-bi
\end{bmatrix}
C=
\begin{bmatrix}
a+d & b+c \\
-b+c & a-d
\end{bmatrix},
\]
we see that $C\m\bigl(\PSU^\pm_{1,1}\Zbb[i]\bigr)C$ is a subgroup of $\PSL^\pm_2\Zbb$.
Taking into account the respective fundamental domains, it is easy to check that $\angles{F,P,J}$ has index~$3$ in $\PSL^\pm_2\Zbb$; therefore 
$C\m\bigl(\PSU^\pm_{1,1}\Zbb[i]\bigr)C$ equals either $\angles{F,P,J}$ or the full
$\PSL^\pm_2\Zbb$. However, this second possibility is ruled out by the fact that
$\PSL^\pm_2\Zbb$ (which is the extended $(2,3,\infty)$ triangle group) contains elements of order~$3$, and hence of trace~$1$ (up to sign), while clearly no element of $\PSU^\pm_{1,1}\Zbb[i]$ may have trace~$1$.
\end{proof}

\section{Pythagorean triples and the Romik map}\label{ref9}

A \newword{[primitive] pythagorean triple} is a point $\bm{t}=(t_1,t_2,t_3)\in\Zbb^3$ such that $t_3>0$, $\gcd(t_1,t_2,t_3)=1$, and $t_1^2+t_2^2=t_3^2$. Pythagorean triples correspond bijectively to rational points in the unit circle, which in turn correspond, via stereographic projection, to points in $P^1\Qbb$. These correspondences provide various techniques for enumerating triples, among which the one known to Euclid: given any reduced fraction $a/b$, the triple
$(a^2-b^2,2ab,a^2+b^2)/\gcd(a^2-b^2,2ab,a^2+b^2)$ is pythagorean, and every pythagorean triple is uniquely obtainable in this way (the gcd in the denominator is $1$ if $2\mid ab$, and $2$ otherwise).
As noted in the introduction, many techniques are cast in the form of the descent of a binary or ternary tree.

A remarkable connection with the theory of continued fractions is offered in~\cite{romik08}; as a warmup, we sketch it using our notation.
We partition $S^1$ in four quarters $I_0,I_1,I_2,I_3$, with $I_a=\set{\exp(2\pi ti):a/4\le t\le (a+1)/4}$. Let $\bm{A}=\bm{R}_{(1,-1,1)}=\bm{F}\bm{P}\bm{F}$. Then $\bm{A}$ acts on $S^1$ (viewed as $\partial\Kcal$, see the diagram~\eqref{eq14} and the resulting identifications)
by exchanging $\bm{x}$ with the other point of intersection of $S^1$ with the line through $\bm{x}$ and $[1,-1,1]$; the interval $I_3$ is thus bijectively mapped to the union of the other three intervals.
We fold back $I_0\cup I_1\cup I_2$ to~$I_3$ via the reflection $\bm{F}$ acting on $I_0$, the rotation $\bm{J}\bm{F}$ on $I_1$, and the reflection $\bm{J}$ on $I_2$; see Figure~\ref{fig1}. Conjugating this process via the stereographic projection through $[0,1,1]$ we obtain the Romik map in Figure~\ref{fig2}. By construction, it is a continuous piecewise-projective selfmap of the real unit interval $[0,1]$. It is composed of three \newword{pieces}, each one mapping bijectively a subinterval of $[0,1]$ to the whole interval.
The computation of these pieces is built-in in our formalism: 
indeed, since stereographic projection from $[0,1,1]$ is $C\m\circ\tau\circ\upsilon$ on $\partial\Kcal$, computation amounts to switching from boldface to lightface.
Thus, the first piece is induced by $J(FPF)=\bbmatrix{1}{}{-2}{1}$ acting on $FPFJ*[0,1]=[0,1/3]$, the second one by $(JF)(FPF)=JPF=\bbmatrix{-2}{1}{1}{}$ acting on $FPJ*[0,1]=[1/3,1/2]$, and the third by $F(FPF)=PF=\bbmatrix{2}{-1}{1}{}$ on $FP*[0,1]=[1/2,1]$.

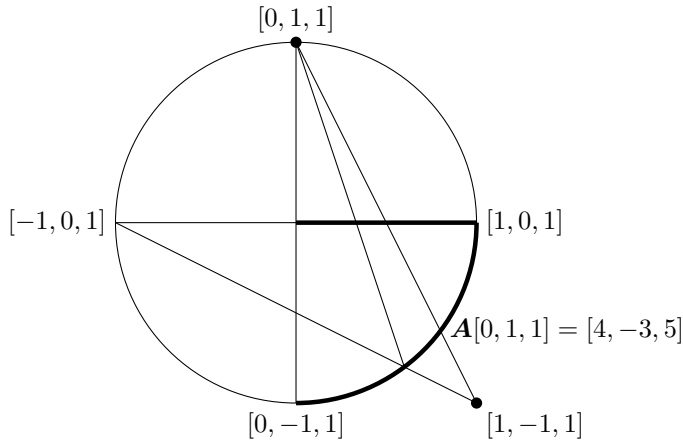
\begin{figure}
\begin{tikzpicture}[scale=2.4]
\coordinate (p0) at (1,0);
\coordinate (p1) at (0,1);
\coordinate (p2) at (-1,0);
\coordinate (p3) at (0,-1);
\coordinate (p4) at (1,-1);
\coordinate (p5) at (3/5,-4/5);
\coordinate (p6) at (4/5,-3/5);

\draw (0,0) circle [radius=1cm];
\draw[line width=0.6mm] (p3) to[arc through ccw=(p5)] (p0);
\draw[line width=0.6mm] (0,0) to (p0);
\draw (p0) to (p2);
\draw (p1) to (p3);
\draw (p4) to (p1);
\draw (p4) to (p2);
\draw (p5) to (p1);

\node[below] at (p3) {$[0,-1,1]$};
\node[right] at (p0) {$[1,0,1]$};
\node at (p1) [circle,fill,inner sep=1.5pt]{};
\node[above] at (p1) {$[0,1,1]$};
\node[left] at (p2) {$[-1,0,1]$};
\node[right] at (p6) {$\bm{A}[0,1,1]=[4,-3,5]$};
\node at (p4) [circle,fill,inner sep=1.5pt]{};
\node[below right] at (p4) {$[1,-1,1]$};
\end{tikzpicture}
\caption{A hint of the construction of the Romik map; the interval $I_3$ and its stereographic projection to $[0,1]$ as thick lines}
\label{fig1}
\end{figure}

\begin{figure}
\includegraphics[width=5.5cm]{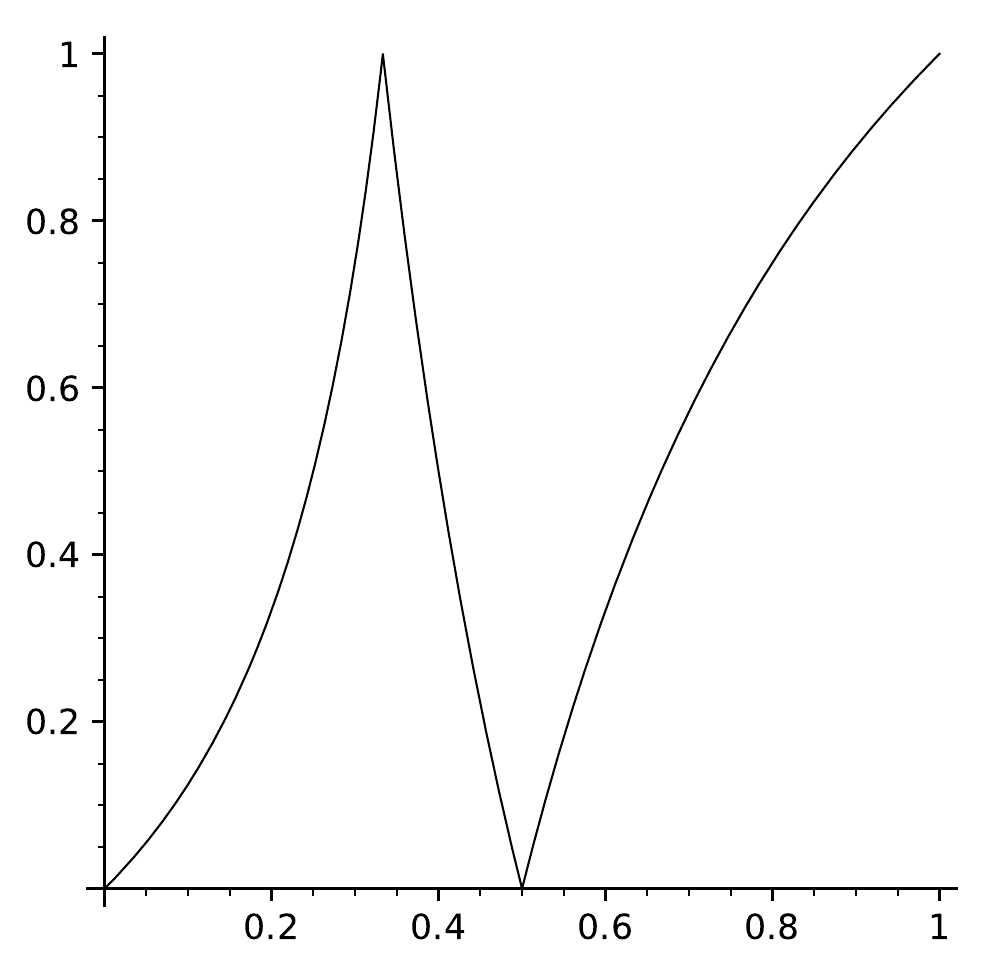}
\caption{The Romik map.}
\label{fig2}
\end{figure}

We adopt another notational shorthand, by consistently writing $\bm{t},\theta$ (or $\bm{s},\sigma$, $\ldots$) for pairs $\bm{t}=[t_1,t_2,t_3]\in\partial\Kcal$, $\theta=(t_1+t_2i)/t_3\in\partial\Dcal$, identified as in the discussion following the diagram~\eqref{eq14}.
We recall that the \newword{residue field} of the point $\bm{t}=[t_1,t_2,t_3]$ in the projective variety $\set{x_1^2+x_2^2-x_3^2=0}=\partial\Kcal$ is $\Qbb(\bm{t})=\Qbb(t_1/t_3,t_2/t_3)$.
If $\Qbb(\bm{t})=\Qbb$ we say that $\bm{t}$ is a \newword{rational point}; in this case $\bm{t}$ has a canonical presentation as a pythagorean triple. The corresponding $\theta\in\Qbb(i)$ has a canonical presentation as well, but a subtler one.
For each prime integer $p\equiv1\pmod4$, write uniquely $p=a^2+b^2$, for integers $a>b>0$, and let $\theta_p=(a+bi)/(a-bi)$ (corresponding, as in Euclid's setting, to $\bm{t}_p=[a^2-b^2,2ab,a^2+b^2]$). It is well known ---and easy to prove~\cite{eckert84}--- that every $\theta\in S^1\cap\Qbb(i)$ factors uniquely in $\Qbb(i)$ as a product of a unit in $\Zbb[i]$ and finitely many numbers $\theta_p$ and their inverses. This implies that the set of primitive pythagorean triples
forms a multiplicative group, isomorphic to
the direct sum of the cyclic group of order $4$ with countably many copies of the infinite cyclic group. We thus obtain our second canonical presentation:
every $\theta\in S^1\cap\Qbb(i)$ can be uniquely expressed as $\theta=\kappa\mu/\bar\mu$, with $\kappa\in\set{1,i,-1,-i}$ and $\mu\in\Zbb[i]$ having prime decomposition of the form
\[
\mu=(a_1+b_1i)^{e_1}\cdots(a_q+b_qi)^{e_q},
\]
with $a_j>\abs{b_j}>0$, $e_j>0$ for every $j$, and the pairs
$(a_1,\abs{b_1}),\ldots,(a_q,\abs{b_q})$ all distinct.

\section{The de Sitter space}\label{ref11}

The \newword{de Sitter space} is the one-sheeted hyperboloid $\Scal=\set{\bm{x}\in\Rbb^3:\angles{\bm{x},\bm{x}}=1}$; it is a lorentzian manifold of constant positive curvature~\cite{nomizu82}, \cite{moschella06}. The de Sitter space is in natural bijection with various spaces of interest to us: these bijections are
well known, albeit a bit scattered in the literature. We collect the relevant facts in Theorem~\ref{ref13}, whose nonstandard feature is the
r\^ole of $\PSL^\pm_2\Rbb$ as the acting group, instead of the usual $\PSL_2\Rbb$.

We recall from \S\ref{ref2} that $A\mapsto\bm{A}$ is a group isomorphism from $\PSL^\pm_2\Rbb$ to $\OO^\uparrow_{2,1}\Rbb$. We define now another isomorphism $\Lambda:\PSL^\pm_2\Rbb\to\SOO_{2,1}\Rbb$ by $\Lambda(A)=(\det A)\bm{A}$.
In the following theorem we let
$e:\set{1,-1}\to\set{0,1}$ have value $0$ on $1$, and $1$ on $-1$; also, we denote any group action by a star.

\begin{theorem}
The\label{ref13} spaces in the following list, together with the specified base points and transitive left actions of $\PSL_2^\pm\Rbb$, are in bijective correspondence.
These correspondences preserve the base points and are equivariant with respect to the actions.
\begin{itemize}
\item[(S1)] The de Sitter space $\Scal$, with base point $(1,0,0)$ and action $A*\bm{x}=\Lambda(A)\bm{x}$.
\item[(S2)] The coset space $\PSL_2\Rbb/\Afrak$, for $\Afrak$ the subgroup of diagonal matrices, with base point $\Afrak$ and action $A*E\Afrak=AEJ^{e(\det A)}\Afrak$.
\item[(S3)] $(\PP^1\Rbb\times\PP^1\Rbb)\setminus(\diag)$, with base point
$(\infty,0)$ and action $A*(\omega,\alpha)=(A*\omega,A*\alpha)$.
\item[(S4)] $(S^1\times S^1)\setminus(\diag)$, with base point
$(i,-i)$ and action $A*(\sigma,\rho)=(\sm{A}*\sigma,\sm{A}*\rho)$.
\item[(S5)] The space of oriented geodesics in $\Dcal$, with base point the geodesic from $-i$ to $i$ and action $A*g=\sm{A}[g]$.
\item[(S6)] The space of quadratic forms $q\cppvector{x}{y}=q_1x^2-q_2xy+q_3y^2$ of discriminant~$1$, with base point $-xy$ and action $(A*q)\cppvector{x}{y}=(\det A)q\bigl(A\m\cppvector{x}{y}\bigr)$.
\end{itemize}
Each space carries a $\PSL^\pm_2\Rbb$-invariant infinite measure, which is the quotient Haar measure in (S2), and is induced by the form $(\omega-\alpha)^{-2}\ud\omega\ud\alpha$ in (S3).
In (S1), the measure of a Borel subset $B$ of $\Scal$ is the euclidean volume of the cone $\set{t\bm{x}:t\in[0,1],\,\bm{x}\in B}$, and analogously for (S6).
\end{theorem}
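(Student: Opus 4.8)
\emph{Proof proposal.} The plan is to realize all six spaces as the \emph{same} homogeneous space $\PSL^\pm_2\Rbb/H$, where $H=\angles{\Afrak,J}$ is the image in $\PSL^\pm_2\Rbb$ of the group of diagonal matrices of determinant $\pm1$. Granting this, everything follows formally: for each item I produce one $\PSL^\pm_2\Rbb$-equivariant map to (or from) a fixed pivot model, say (S3); I check that it is well defined (coset-independent in (S2), independent of the sign of the form in (S6)), equivariant, and base-point-preserving; and I check that each action is transitive with base-point stabilizer exactly $H$. Then orbit--stabilizer gives for each space a canonical equivariant bijection with $\PSL^\pm_2\Rbb/H$, and since a base-point-preserving equivariant map between transitive $G$-sets is automatically the unique bijection between them, all the stated correspondences and their compatibility drop out at once, with base points matched by construction.

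The explicit maps are the obvious geometric ones. From (S2) to (S3): $E\Afrak\mapsto(E*\infty,E*0)$, which is coset-independent because $\Afrak$ fixes $\infty$ and $0$, and equivariant because for $\det A=-1$ one has $J*\infty=\infty$, $J*0=0$, so the twist by $J^{e(\det A)}$ disappears on the relevant points. From (S3) to (S4): the componentwise Cayley transform $(\omega,\alpha)\mapsto(C*\omega,C*\alpha)$, which carries $\PP^1\Rbb$ bijectively onto $S^1$, sends $(\infty,0)$ to $(i,-i)$, and by Convention~\ref{ref19} (so that $\sm{A}=CAC\m$ on $\Dcal$) intertwines the two actions. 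Between (S4) and (S5): an oriented geodesic of $\Dcal$ is recorded by its ordered pair of ideal endpoints, the orientation convention being fixed by the requirement that ``the geodesic from $-i$ to $i$'' correspond to $(i,-i)$. Between (S3) and (S6): a real binary form of discriminant $1$ goes to its ordered pair of real roots, the two sign choices of the form being separated by the ordering (normalize the leading coefficient to $1/(\omega-\alpha)$). Finally, between (S1) and (S5): transport an oriented geodesic of $\Dcal$ to a chord of $\Kcal$ via Lemma~\ref{ref8}, and send it to the unit spacelike vector of $\Scal$ that is polar, with respect to the conic $\partial\Kcal$, to the projective line carrying that chord, the orientation picking one of the two lifts. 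Equivariance of each of these is a short computation using that $A\mapsto\bm A$, $A\mapsto\sm A$ and $\Lambda$ are group homomorphisms; the key point is that the twist by $\det A$ in (S1) and (S6) and by $J^{e(\det A)}$ in (S2) is exactly what makes the orientation-reversing half act compatibly. For instance the identity $A*\omega-A*\alpha=(\det A)(\omega-\alpha)(c\omega+d)\m(c\alpha+d)\m$ handles the ordering on roots and endpoints uniformly, and $\Lambda(J)=\diag(1,-1,-1)$ fixes $(1,0,0)$, so it lies in the (S1)-stabilizer.

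Transitivity and the stabilizer computation are classical for (S3): $\PSL_2\Rbb$ acts transitively on ordered pairs of distinct points of $\PP^1\Rbb$ with stabilizer $\Afrak$, and adjoining $J$ gives $H$ and fills out $\PSL^\pm_2\Rbb$. For (S1) one reads both off the Iwasawa picture~\eqref{eq2}: $\Lambda$ of the hyperbolic-diagonal subgroup fixes $(1,0,0)$, while the rotation and unipotent subgroups move it, so the orbit is all of $\Scal$ and the stabilizer is the $\Lambda$-preimage of the pointwise fixer of $(1,0,0)$ in $\SOO_{2,1}\Rbb$, which is $H$. The remaining cases reduce to (S3) via the maps above. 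For the measures: $\PSL^\pm_2\Rbb$ is unimodular (it contains the unimodular open subgroup $\PSL_2\Rbb$ of finite index) and $H$ is unimodular (virtually abelian), so $\PSL^\pm_2\Rbb/H$ carries a nonzero invariant Radon measure, unique up to a positive scalar, and it is infinite since $H$ is noncompact and the quotient is two-dimensional and noncompact. It then suffices to verify invariance of each concrete description: the quotient Haar measure of (S2) by construction; the form $(\omega-\alpha)^{-2}\ud\omega\ud\alpha$ on (S3) via $\ud(A*\omega)=(c\omega+d)^{-2}\ud\omega$ together with the displacement identity above (the $\det A=-1$ branch squaring away cleanly); and the cone-volume measures on (S1) and (S6) because $\Lambda(A)\in\SOO_{2,1}\Rbb$ and the twisted $\mathrm{Sym}^2$-action on forms both preserve Lebesgue measure on $\Rbb^3$ and send cones to cones. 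By uniqueness these measures correspond under the bijections, up to a scalar one can pin down by evaluation at the common base point if exact equality is wanted.

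The main obstacle is not any single computation but the uniform bookkeeping of the orientation and sign twists. The nonstandard feature here is that the acting group is the full $\PSL^\pm_2\Rbb$, and each model required its own device---the conjugate-linear action on $S^1$ in (S4) and (S5), the factor $J^{e(\det A)}$ in (S2), the factor $\det A$ in (S1) and (S6)---to accommodate the orientation-reversing elements; one must check that these devices are mutually consistent across all the explicit maps and, crucially, that the base-point stabilizer never degenerates from $H$ to $\Afrak$ (or to something anti-diagonal). A secondary, smaller point is fixing the orientation conventions in (S3), (S5), (S6) so that the three base points $(\infty,0)$, the geodesic from $-i$ to $i$, and $-xy$ genuinely match; once a single consistent convention is adopted (first coordinate $=$ forward ideal endpoint, leading coefficient normalized to $1/(\omega-\alpha)$) the rest is routine.
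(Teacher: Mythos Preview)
Your proof is correct and takes a genuinely different route from the paper's. The paper argues by writing down an explicit cycle of maps $(\text{S6})\to(\text{S3})\to(\text{S2})\to(\text{S6})$---with the $(\text{S6})\to(\text{S3})$ step split into three cases according to whether $q_1=0$ and the sign of $q_2$---and then verifying by a matrix computation that the composite is the identity; the link $(\text{S1})\leftrightarrow(\text{S6})$ is a bare linear change of variables, and $(\text{S3})\leftrightarrow(\text{S4})\leftrightarrow(\text{S5})$ are declared obvious. Your argument instead identifies a single homogeneous model $\PSL^\pm_2\Rbb/H$ with $H=\angles{\Afrak,J}$, checks for each item that the action is transitive with base-point stabilizer exactly $H$, and invokes the uniqueness of equivariant base-point-preserving maps between transitive $G$-sets.

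Each approach buys something. Yours is cleaner and more uniform: once the stabilizer computations are done (and you correctly isolate the only delicate one, that the various $\det A$ and $J^{e(\det A)}$ twists all encode the \emph{same} index-$2$ extension of $\Afrak$), no case analysis or cycle-closing computation is needed, and the measure statement follows immediately from unimodularity and uniqueness. The paper's approach, on the other hand, yields as a byproduct the explicit coordinate formulas relating $\bm w\in\Scal$, the form $q$, and the pair $(\omega,\alpha)$---and these formulas are used repeatedly later in the paper (in the description of circle intervals, in the worked billiard examples, and in the singularity proof). Your polarity description of $(\text{S1})\leftrightarrow(\text{S5})$ is in fact the geometric content of those formulas, but if one actually needs the coordinates downstream, the paper's brute-force change of variables is the faster way to get them.
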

\begin{proof}
The natural bijections among the spaces in (S3), (S4), (S5) are the obvious ones resulting from the diagram~\eqref{eq14}. Here we will first describe the bijections among (S2), (S3), (S6), and then the one between (S1) and (S6).

Let $q$ be a form as in (S6), associated to the symmetric matrix
\begin{equation}\label{eq6}
Q=\begin{pmatrix}
q_1 & -q_2/2 \\
-q_2/2 & q_3
\end{pmatrix},
\end{equation}
of determinant $-1/4$. We obtain a pair $(\omega,\alpha)$ as in (S3) by labeling the two roots of $q(x,1)$ as follows:
\begin{itemize}
\item[(a)] if $q_1=0$ and $q_2=1$, then $\omega=\infty$ and $\alpha=q_3$;
\item[(b)] if $q_1=0$ and $q_2=-1$, then $\omega=-q_3$ and $\alpha=\infty$;
\item[(c)] if $q_1\not=0$, then
\[
\omega=\frac{q_2+1}{2q_1}, \qquad \alpha=\frac{q_2-1}{2q_1}.
\]
\end{itemize}
Given a pair $(\omega,\alpha)$ as in (S3), we set
\[
E=\begin{cases}
\begin{bmatrix}
1 & \alpha\\
& 1
\end{bmatrix}, & \text{if $\omega=\infty$};\\
\begin{bmatrix}
\omega & -1\\
1 & 
\end{bmatrix}, & \text{if $\alpha=\infty$};\\
\abs{\omega-\alpha}^{-1/2}
\begin{bmatrix}
\omega & \alpha\\
1 & 1
\end{bmatrix}
J^{e(\sgn(\omega-\alpha))}, & \text{otherwise};
\end{cases}
\]
thus defining a coset $E\Afrak$ as in (S2).

Finally, any $E\Afrak$ in (S2) determines a symmetric matrix $Q'$ of determinant
$-1/4$ via
\[
Q'=-\frac{1}{2}(E\m)^\top
\begin{pmatrix}
 & 1 \\
1 & \\
\end{pmatrix}
E\m;
\]
note that $Q'$ is well defined, i.e., independent from the choice of a representative in $E\Afrak$ and from the lift of this representative to $\SL_2\Rbb$.

It is clear that each of these constructions preserves the base points and is equivariant with respect to the listed actions. Therefore, the claimed correspondence between (S2), (S3), (S6) follows as soon as we prove that the final $Q'$ equals the starting $Q$. We check case (c), leaving the simpler cases (a) and (b) to the reader. By definition,
\[
E=\frac{1}{2\abs{q_1}^{1/2}}
\begin{bmatrix}
q_2+1 & q_2-1 \\
2q_1 & 2q_1
\end{bmatrix}
J^{e(\sgn q_1)},
\]
so that
\[
E\m=\frac{1}{2\abs{q_1}^{1/2}}
J^{e(\sgn q_1)}
\begin{bmatrix}
2q_1 & -q_2+1 \\
-2q_1 & q_2+1
\end{bmatrix}.
\]
Hence
\begin{align}
Q'&=-\frac{1}{8\abs{q_1}}
\begin{pmatrix}
2q_1 & -2q_1 \\
-q_2+1 & q_2+1
\end{pmatrix}
J^{e(\sgn q_1)}
\begin{pmatrix}
 & 1 \\
1 & 
\end{pmatrix}
J^{e(\sgn q_1)}
\begin{pmatrix}
2q_1 & -q_2+1 \\
-2q_1 & q_2+1
\end{pmatrix}                           \notag  \\
&=-(\sgn q_1)\frac{1}{8\abs{q_1}}
\begin{pmatrix}
2q_1 & -2q_1 \\
-q_2+1 & q_2+1
\end{pmatrix}
\begin{pmatrix}
 & 1 \\
1 & 
\end{pmatrix}
\begin{pmatrix}
2q_1 & -q_2+1 \\
-2q_1 & q_2+1
\end{pmatrix}                           \label{eq3} \\
&=-\frac{1}{8q_1}
\begin{pmatrix}
-8q_1^2 & 4q_1q_2 \\
4q_1q_2 & -2q_2^2+2
\end{pmatrix},                           \notag
\end{align}
which is the initial $Q$; note the use of the identity $J^{\pm1}\ppmatrix{}{1}{1}{}J^{\pm1}=\pm1\ppmatrix{}{1}{1}{}$ in the computation.

The bijection between (S1) and (S6) is a simple change of variables, namely
\begin{equation}\label{eq7}
\begin{pmatrix}
w_1\\
w_2\\
w_3
\end{pmatrix}=
\begin{pmatrix}
& 1 & \\
-1 & & 1 \\
1 & & 1
\end{pmatrix}
\begin{pmatrix}
q_1\\
q_2\\
q_3
\end{pmatrix}.
\end{equation}
This change of variables transforms the matrix $Q$ in~\eqref{eq6} to $W/2$, where $W$ is the matrix in~\eqref{eq5}. This implies that the bijection is equivariant with respect to the actions listed in (S1) and (S6); see also Remark~\ref{ref44}.

The statement about invariant measures is well known; see, e.g.,~\cite[\S8]{fried96}.
\end{proof}

For future reference we list here the form $q$ and the point $\bm{w}\in\Scal$ as a function of~$(\omega,\alpha)$:
\begin{align}
q&=-xy+\alpha y^2,  &\bm{w}&=(1,\alpha,\alpha),  &\text{if $\omega=\infty$};\notag\\
q&=xy-\omega y^2,  &\bm{w}&=-(1,\omega,\omega),  &\text{if $\alpha=\infty$};\label{eq4}\\
q&=\frac{x^2-(\omega+\alpha)xy+\omega\alpha y^2}{\omega-\alpha}, &\bm{w}&=\frac{(\omega+\alpha,\omega\alpha-1,\omega\alpha+1)}{\omega-\alpha},  &\text{otherwise}.\notag
\end{align}

\section{Circle intervals} \label{ref40}

The unit circle $S^1$ is cyclically ordered by the ternary betweenness relation $\bm{t}\prec \bm{x}\prec \bm{t}'$, which reads ``$\bm{t},\bm{t}',\bm{x}$ are pairwise distinct, and traveling from $\bm{t}$ to~$\bm{t}'$ counterclockwise we meet $\bm{x}$''. Every pair of distinct points $\bm{t},\bm{t}'$ determines two closed intervals, namely $[\bm{t},\bm{t}']=\set{\bm{t},\bm{t}'}\cup\set{\bm{x}:\bm{t}\prec \bm{x}\prec \bm{t}'}$ and $[\bm{t}',\bm{t}]$. Given $\bm{w}$ in the de Sitter space, the set $I_{\bm{w}}=\set{\bm{x}\in S^1:x_3\angles{\bm{w},\bm{x}}\ge0}$ is an interval as well (the factor $x_3$, i.e., the third coordinate of $\bm{x}$, makes the definition independent from the choice of a representative for $\bm{x}$).
Let us denote the ordinary cross product of two vectors in $\Rbb^3$ by $\bm{x}\times\bm{y}$.

\begin{lemma}
Let\label{ref14} $\bm{t},\bm{t}'\in S^1$ be distinct,
and let
\begin{equation}\label{eq1}
\bm{w}=\frac{\bm{Lt}'\times\bm{Lt}}{\angles{\bm{t}',\bm{t}}},
\end{equation}
the right-hand side being independent from the chosen lifts of $\bm{t},\bm{t}'$ to $\Rbb^3\setminus\set{0}$.
Then the following statements hold.
\begin{itemize}
\item[(i)] $\bm{w}\in\Scal$, and $I_{\bm{w}}=[\bm{t},\bm{t}']$.
\item[(ii)] Let $(\omega,\alpha)\in(\PP^1\Rbb\times\PP^1\Rbb)\setminus(\diag)$ be the pair corresponding to $\bm{w}$ according to Theorem~\ref{ref13}. Then we have
\[
(\omega,\alpha)=\bigl((\mu\circ\upsilon)(\bm{t}'),
(\mu\circ\upsilon)(\bm{t})\bigr).
\]
\item[(iii)] For every $\bm{A}\in\OO^\uparrow_{2,1}\Rbb$, we have $\bm{A}[I_{\bm{w}}]=I_{\bm{Aw}}$, which equals $[\bm{At},\bm{At}']$ if $\det\bm{A}=1$, and $[\bm{At}',\bm{At}]$ otherwise.
\item[(iv)] $\bm{w}\in\Qbb^3$ if and only if both $\bm{t}$ and $\bm{t}'$ are rational points.
\item[(v)] The arclength of $[\bm{t},\bm{t}']$ and the third coordinate $w_3$ of $\bm{w}$ are related by $\arclength([\bm{t},\bm{t}'])=2\arccot(w_3)$.
\item[(vi)] If $\bm{t}$ and $\bm{t}'$ do not lie on the same diameter (i.e., by~(v), if $w_3\not=0$), then the unique circle in $\Rbb^2$ perpendicular to $S^1$ and passing through $\bm{t},\bm{t}'$ has center $(w_1/w_3,w_2/w_3)$ and curvature $\abs{w_3}$.
\item[(vii)] Assume that
\[
I_{\bm{w}_0}\supseteq I_{\bm{w}_1}\supseteq I_{\bm{w}_2}\supseteq \cdots,
\]
with arclength tending to~$0$ (i.e., $\lim_{t\to\infty} w_{t,3}=\infty$).
Then $\lim_{t\to\infty}\arclength (I_{\bm{w}_t}) \big/ (2/w_{t,3})=1$.
\end{itemize}
\end{lemma}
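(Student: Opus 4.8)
The plan is to reduce the seven assertions to a short list of elementary identities relating the ordinary cross product $\bm x\times\bm y$ to the Lorentz form, plus one transitivity argument to pin down an orientation. The preliminary facts I would record first are: \textbf{(a)} for invertible $\bm M$, $(\bm M\bm u)\times(\bm M\bm v)=(\det\bm M)\bm M^{-\top}(\bm u\times\bm v)$, so in particular $\bm L\bm u\times\bm L\bm v=-\bm L(\bm u\times\bm v)$ and, using $\bm A^{-\top}=\bm L\bm A\bm L$ for $\bm A\in\OO_{2,1}\Rbb$, both $\langle\bm A\bm t',\bm A\bm t\rangle=\langle\bm t',\bm t\rangle$ and the equivariance
\[
\frac{\bm L(\bm A\bm t')\times\bm L(\bm A\bm t)}{\langle\bm A\bm t',\bm A\bm t\rangle}=(\det\bm A)\,\bm A\bm w;
\]
\textbf{(b)} the Lorentzian Binet--Cauchy identity $\langle\bm a\times\bm b,\bm c\times\bm d\rangle=-\bigl(\langle\bm a,\bm c\rangle\langle\bm b,\bm d\rangle-\langle\bm a,\bm d\rangle\langle\bm b,\bm c\rangle\bigr)$, which follows from the Euclidean one and $\langle\bm u,\bm v\rangle=\bm u\cdot\bm L\bm v$; \textbf{(c)} $\bm u\times\bm v$ is Euclidean-orthogonal to $\bm u$ and $\bm v$. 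I would also note that the right-hand side of~\eqref{eq1} is unchanged under rescaling the lifts $\bm t,\bm t'$ (it is homogeneous of degree $0$ in each) and that $\langle\bm t',\bm t\rangle\ne0$ since a form of signature $(2,1)$ has no totally isotropic plane; finally, from $\langle\bm A\bm w,\bm x\rangle=\langle\bm w,\bm A^{-1}\bm x\rangle$ (valid for $\bm A\in\OO_{2,1}\Rbb$) and the fact that $\bm A^{-1}\in\OO^\uparrow_{2,1}\Rbb$ preserves the sign of $x_3$ on the light cone, one obtains $\bm A[I_{\bm w}]=I_{\bm A\bm w}$ for all $\bm A\in\OO^\uparrow_{2,1}\Rbb$.

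For (i): by \textbf{(a)}, $\langle\bm w,\bm w\rangle=\langle\bm t'\times\bm t,\bm t'\times\bm t\rangle/\langle\bm t',\bm t\rangle^2$, and the numerator equals $\langle\bm t',\bm t\rangle^2$ by \textbf{(b)} and the isotropy of $\bm t,\bm t'$, so $\bm w\in\Scal$. By \textbf{(c)}, $\langle\bm w,\bm t\rangle=\langle\bm w,\bm t'\rangle=0$, so $\bm t,\bm t'$ are precisely the two points where the line $\langle\bm w,\argomento\rangle=0$ meets the conic $S^1$; since $x_3\ne0$ on $S^1$ and $x_3\langle\bm w,\bm x\rangle$ is continuous and nonvanishing off $\set{\bm t,\bm t'}$, $I_{\bm w}$ is one of the two closed arcs bounded by $\bm t,\bm t'$. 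To identify it as $[\bm t,\bm t']$ I check the base case $\bm t=[1,0,1]$, $\bm t'=[0,1,1]$, where~\eqref{eq1} gives $\bm w=(1,1,1)$ and $I_{(1,1,1)}=\set{x_1^2+x_2^2=1,\;x_1+x_2\ge1}$ is the first-quadrant arc $=[\bm t,\bm t']$; since $\SOO^\uparrow_{2,1}\Rbb\cong\PSL_2\Rbb$ acts transitively and orientation-preservingly on ordered pairs of distinct points of $S^1$, the two equivariances above carry this to an arbitrary pair. Item (iii) is then immediate: $\bm A[I_{\bm w}]=I_{\bm A\bm w}$ is already known, while $\bm A\bm w$ for $\det\bm A=1$ (resp.\ $-\bm A\bm w$ for $\det\bm A=-1$) is the \eqref{eq1}-vector attached to $(\bm A\bm t,\bm A\bm t')$ (resp.\ to $(\bm A\bm t',\bm A\bm t)$), so (i) identifies $I_{\bm A\bm w}$ with $[\bm A\bm t,\bm A\bm t']$ (resp.\ $[\bm A\bm t',\bm A\bm t]$).

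For (ii): writing $\alpha=(\mu\circ\upsilon)(\bm t)=t_1/(t_3-t_2)$ and $\omega=(\mu\circ\upsilon)(\bm t')=t_1'/(t_3'-t_2')$ (the restriction of $\mu\circ\upsilon$ to $\partial\Kcal$, by Lemma~\ref{ref8} and the discussion following~\eqref{eq14}), one has $\bm t=[2\alpha,\alpha^2-1,\alpha^2+1]$ and $\bm t'=[2\omega,\omega^2-1,\omega^2+1]$, and substituting into~\eqref{eq1} and simplifying yields exactly the vector $(\omega+\alpha,\omega\alpha-1,\omega\alpha+1)/(\omega-\alpha)$ appearing in~\eqref{eq4}, with the cases $\bm t$ or $\bm t'$ equal to $[0,1,1]$ (i.e.\ $\alpha$ or $\omega$ infinite) matched against the remaining lines of~\eqref{eq4}; by Theorem~\ref{ref13} this vector corresponds to the pair $(\omega,\alpha)$. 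For (iv): $\Leftarrow$ is read off~\eqref{eq1}; for $\Rightarrow$, the endpoints $\bm t,\bm t'$ are cut out of the rational conic $x_1^2+x_2^2=x_3^2$ by the rational line $\langle\bm w,\argomento\rangle=0$, which in the parametrization $[s:u]\mapsto[2su,s^2-u^2,s^2+u^2]$ becomes the binary quadratic $(w_2-w_3)s^2+2w_1su-(w_2+w_3)u^2$ over $\Qbb$ of discriminant $4(w_1^2+w_2^2-w_3^2)=4\langle\bm w,\bm w\rangle=4$, a perfect square, hence split over $\Qbb$. For (v): normalizing $\bm t=(\cos\theta_1,\sin\theta_1,1)$, $\bm t'=(\cos\theta_2,\sin\theta_2,1)$ and setting $\phi=\arclength([\bm t,\bm t'])\in(0,2\pi)$, the third coordinate of~\eqref{eq1} is $w_3=(t_1't_2-t_2't_1)/\langle\bm t',\bm t\rangle=-\sin\phi/(\cos\phi-1)=\cot(\phi/2)$, so $\phi=2\arccot(w_3)$. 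For (vi): a circle in $\Rbb^2$ with centre $\bm c$ is orthogonal to $S^1$ and passes through $\bm x\in S^1$ exactly when $\bm c\cdot\bm x=1$; since $\langle\bm w,\bm t\rangle=\langle\bm w,\bm t'\rangle=0$, the point $\bm c=(w_1/w_3,w_2/w_3)$ meets both conditions, its squared radius is $\abs{\bm c}^2-1=\langle\bm w,\bm w\rangle/w_3^2=1/w_3^2$, so the curvature is $\abs{w_3}$, and uniqueness holds because $\bm t,\bm t'$ are linearly independent when $w_3\ne0$. Finally (vii): by (v) and $\arccot(x)=\arctan(1/x)\sim1/x$ as $x\to+\infty$, the hypotheses force $w_{t,3}=\cot\bigl(\arclength(I_{\bm w_t})/2\bigr)\to+\infty$ and $\arclength(I_{\bm w_t})/(2/w_{t,3})=w_{t,3}\arccot(w_{t,3})\to1$. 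The only steps that are not pure computation are the cyclic-order bookkeeping in (i) and (iii) — ensuring $I_{\bm w}$ is the arc from $\bm t$ to $\bm t'$ in the correct direction and correctly tracking the sign $\det\bm A$ — and the observation in (iv) that it is exactly the normalization $\langle\bm w,\bm w\rangle=1$ that makes the relevant discriminant a square; I expect the former to be the main place where care is needed.
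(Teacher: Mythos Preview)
Your proof is correct and complete; every step checks out, including the preliminary identities \textbf{(a)}--\textbf{(c)} (your Lorentzian Binet--Cauchy follows exactly as you indicate, via $\bm L(\bm c\times\bm d)=-(\bm L\bm c\times\bm L\bm d)$ and the Euclidean identity).

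The differences from the paper's argument are mostly stylistic but worth noting. For~(i), the paper rotates to the normal form $\bm t=[1,0,1]$, $\bm t'=[\cos r,\sin r,1]$, computes $\bm w$ explicitly, and then studies the sign of $u\mapsto\langle\bm w,\bm x(u)\rangle$ along the circle to pin down $I_{\bm w}$; you instead get $\langle\bm w,\bm w\rangle=1$ from Binet--Cauchy and settle the orientation by one base case plus the transitive, orientation-preserving action of $\SOO^\uparrow_{2,1}\Rbb$ on ordered pairs. Your route is cleaner and yields~(iii) for free via the equivariance $\bm w\mapsto(\det\bm A)\bm A\bm w$, whereas the paper argues~(iii) separately. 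For~(iv), the paper passes through Theorem~\ref{ref13} to obtain a rational quadratic form of discriminant~$1$, while you stay on the conic: the parametrization $[s:u]\mapsto[2su,s^2-u^2,s^2+u^2]$ is exactly the inverse stereographic projection, so your discriminant~$4$ and the paper's discriminant~$1$ are the same observation in different coordinates. For~(vi) you characterize the center by $\bm c\cdot\bm x=1$, while the paper identifies $[\bm w]\in\PP^2\Rbb$ as the pole of the chord (intersection of the two tangents); again the same fact from two angles. Items~(ii), (v), (vii) are handled identically in both treatments.
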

\begin{proof}
(i) Every rotation
\[
\bm{S}=
\begin{pmatrix}
\cos s & -\sin s & \\
\sin s & \cos s & \\
& & 1
\end{pmatrix}
\]
leaves invariant the arclength of $[\bm{t},\bm{t}']$ and the third coordinate of $\bm{w}$ (because $\bm{S}$ belongs to $\SOO_3\Rbb$ as well as to $\SOO_{2,1}\Rbb$, and hence $(\bm{L}\bm{S}\bm{t}'\times\bm{L}\bm{S}\bm{t})\big/\angles{\bm{S}\bm{t}',\bm{S}\bm{t}}=\bm{S}\bm{w}$).
Therefore we assume without loss of generality $\bm{t}=[1,0,1]$ and $\bm{t}'=[\cos r,\sin r,1]$, for some $0<r<2\pi$. Then, by explicit computation,
$\bm{w}=\bigl((\sin r)\big/(1-\cos r),1,(\sin r)\big/(1-\cos r)\bigr)$, which is indeed in $\Scal$. Let $\bm{x}(u)=[\cos u,\sin u,1]$, and let $f(u)=\angles{\bm{w},\bm{x}(u)}:[0,2\pi)\to\Rbb$. Then, by elementary projective geometry, $f$ takes value $0$ in precisely two points, namely in $u=0$ and in the unique solution to $\bm{x}(u)=\bm{t}'$. Again by explicit computation, $f$ has derivative $f'(u)=\cos u-(\sin r)(\sin u)/(1-\cos r)$, which is positive at $0$. This,
and extending $f$ to be periodic, then implies that $\angles{\bm{w},\bm{x}}\ge0$ if and only if $\bm{x}\in[\bm{t},\bm{t}']$, as claimed.

\noindent (ii) We have $(\mu\circ\upsilon)\m(\omega)=(2\omega,\omega^2-1,\omega^2+1)$, and analogously for $\alpha$. Our statement amounts then to the verification that the vector
\[
\frac{\bm{L}(2\omega,\omega^2-1,\omega^2+1)\times\bm{L}(2\alpha,\alpha^2-1,\alpha^2+1)}{\angles{(2\omega,\omega^2-1,\omega^2+1),(2\alpha,\alpha^2-1,\alpha^2+1)}}
\]
resulting from~\eqref{eq1}
equals the vector $\bm{w}$ given by~\eqref{eq4}. This is a straightforward computation.

\noindent (iii) Let $\bm{x}$ be a point in $S^1$, and choose a representative for it with positive third coordinate. Then, for every $\bm{A}\in\OO^\uparrow_{2,1}\Rbb$, the third coordinate of $\bm{A}\m\bm{x}$ is still positive; we thus have $\bm{x}\in\bm{A}[I_{\bm{w}}]$ iff $\bm{A}\m\bm{x}\in I_{\bm{w}}$ iff $\angles{\bm{w},\bm{A}\m\bm{x}}\ge0$ iff
$\angles{\bm{A}\bm{w},\bm{x}}\ge0$ iff $\bm{x}\in I_{\bm{A}\bm{w}}$.
The second statement follows from the first and the remark that
$\bm{t}\prec\bm{A}\m\bm{x}\prec\bm{t}'$ is equivalent to 
$\bm{A}\bm{t}\prec\bm{x}\prec\bm{A}\bm{t}'$ if $\det\bm{A}=1$,
and to $\bm{A}\bm{t}'\prec\bm{x}\prec\bm{A}\bm{t}$ if
$\det\bm{A}=-1$.

\noindent (iv) The right-to-left implication follows from the definition of~$\bm{w}$. Conversely, if $\bm{w}\in\Qbb^3$ then the proof of the equivalence between (S1) and (S6) in Theorem~\ref{ref13} yields that the form $q$ corresponding to $\bm{w}$ has rational coefficients. Since $q$ has discriminant~$1$, the roots of $q(x,1)$ (given by (a), (b), (c) in the proof of the same 
Theorem~\ref{ref13}) are rational numbers.
By~(ii),
$\bm{t}$ and $\bm{t}'$
are the reverse stereographic projections through $[0,1,1]$ of these roots, and thus are rational points.

\noindent (v) As in~(i), we assume $\bm{t}=[1,0,1]$ and $\bm{t}'=[\cos r,\sin r,1]$. Then, as computed in~(i), $w_3=(\sin r)\big/(1-\cos r)=\cot(r/2)$, and our statement follows.

\noindent (vi) Looking at $\bm{w}$ as a point in $\PP^2\Rbb$, the identities
$\angles{\bm{w},\bm{t}}=\angles{\bm{w},\bm{t}'}=0$ mean that~$\bm{w}$ is the intersection point of the two lines tangent to $S^1$ at $\bm{t}$ and $\bm{t}'$; thus the described circle has center $(w_1/w_3,w_2/w_3)$. Upon applying the rotation in the proof of~(i), the statement about the curvature follows by direct inspection.

\noindent (vii) This is clear.
\end{proof}

\begin{remark}
Since,\label{ref44} as it is easily seen, the map $\bm{w}\mapsto I_{\bm{w}}$ is a bijection between $\Scal$ and the space of closed circle intervals, it is tempting to add a seventh item to the list in Theorem~\ref{ref13}. 
However this would not be correct, since the action in Lemma~\ref{ref14}(iii) does not agree with the one in Theorem~\ref{ref13}(S1). In other words, $\PSL^\pm_2\Rbb$ acts on the space of intervals via the ``bold'' isomorphism $A\mapsto\bm{A}$, while it acts on the de Sitter space via $\Lambda$.
The following commuting diagram may clarify the situation
\begin{equation}\label{eq16}
\begin{tikzcd}[column sep=1.5em]
& & & \OO^\uparrow_{2,1}\Rbb \arrow[r,hook] \arrow[dd,leftrightarrow] & \OO_{2,1}\Rbb \arrow[dd,leftrightarrow]\\
\PSU^\pm_{1,1}\Cbb\arrow[rr,tail,two heads,outer sep=1mm,"C\m\argomento C"] & & \PSL^\pm_2\Rbb \ar[dr,tail,two heads,"\Lambda"'] \ar[ur,tail,two heads,"\operatorname{bold}"] &  &  \\
& & & \SOO_{2,1}\Rbb \ar[r,hook] & \OO_{2,1}\Rbb
\end{tikzcd}
\end{equation}
In~\eqref{eq16}, the rightmost vertical arrow is the involutive automorphism $\bm{A}\mapsto(\det \bm{A})(\sgn \bm{A}_{3,3})\bm{A}$ of $\OO_{2,1}\Rbb$, which restricts to the isomorphisms $\Lambda\circ\operatorname{bold}\m$ and $\operatorname{bold}\circ\Lambda\m$. 
Since these isomorphisms obviously preserve the fact that a matrix has integer entries, Theorem~\ref{ref7} implies that $\SOO_{2,1}\Zbb=\Lambda\bigl[\angles{F,P,G}\bigr]=\angles{-\bm{F},-\bm{P},-\bm{G}}\simeq\Delta^\pm(2,4,\infty)$ and $\SOO^\uparrow_{2,1}\Zbb=\Lambda\bigl[\angles{F,P,G}^+\bigr]=\angles{\bm{F},\bm{P},\bm{G}}^+\simeq\Delta(2,4,\infty)$.
\end{remark}

When working with continued fractions algorithms one naturally deals with unimodular intervals in $\PP^1\Rbb$, namely intervals $[p/q,p'/q']$ with rational endpoints and such that $\det\ppmatrix{p}{p'}{q}{q'}=-1$; for example, the intervals $[1/(a+1),1/a]$ of continuity for the Gauss map $x\mapsto 1/x-\floor{1/x}$ are unimodular.
It is a trivial ---but key--- fact that the modular group $\PSL_2\Zbb$ acts simply transitively on such intervals. The situation for intervals on the circle is more involved.

\begin{theorem}
The\label{ref15} set $\Scal\cap\Zbb^3$ is partitioned in two orbits, corresponding to the parity of $w_3$, by the action of $\SOO^\uparrow_{2,1}\Zbb$. On each orbit the action is simply transitive.
Replacing $\SOO^\uparrow_{2,1}\Zbb$ with its index-$2$ subgroup $\Lambda\bigl[\angles{F,P,J}^+\bigr]$ each orbit is further split in two.
\end{theorem}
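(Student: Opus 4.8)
The plan is to prove, in order: (a)~the parity of $w_3$ is a $\SOO^\uparrow_{2,1}\Zbb$-invariant, so there are at least two orbits; (b)~the action is free; (c)~each of the two parity classes is a single orbit; (d)~the last assertion is a formal consequence. For~(a) I would write $\SOO^\uparrow_{2,1}\Zbb=\angles{\bm{F},\bm{P},\bm{G}}^+$ (Theorem~\ref{ref7}, Remark~\ref{ref44}) and reduce the generators modulo~$2$: since $\bm{F}=\operatorname{diag}(1,-1,1)$ and $\bm{G}$ leave $w_3$ untouched while $\bm{P}\equiv\bm{I}\pmod2$, every element of the group preserves $w_3\bmod 2$; and $(1,0,0),(1,1,1)\in\Scal\cap\Zbb^3$ have $w_3$ even, respectively odd.

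For~(b), suppose $\bm{A}\in\SOO^\uparrow_{2,1}\Zbb$ fixes $\bm{w}\in\Scal\cap\Zbb^3$. Then $\bm{w}$ is a spacelike $(+1)$-eigenvector of $\bm{A}$; since the $(+1)$-eigenspace of a nontrivial parabolic is a null line and that of a nontrivial elliptic is a timelike line, $\bm{A}$ is the identity or hyperbolic. If hyperbolic, its eigenvalues are $1,\lambda,\lambda^{-1}$ with $\lambda$ real, $\lambda\ne\pm1$, and $n:=\lambda+\lambda^{-1}=\tr\bm{A}-1\in\Zbb$; then $|n|\ge3$, $n^2-4$ is not a square, so $\lambda\notin\Qbb$. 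The $\lambda$-eigenvector is isotropic, so by Lemma~\ref{ref14}(i)--(ii) (the axis of $\bm{A}$ being the geodesic dual to $\bm{w}$) it projectivizes to an endpoint $\bm{t}$ or $\bm{t}'$ of $I_{\bm{w}}$, and it is not a rational point, since a rational isotropic eigenvector would force $\lambda\in\Qbb$. By Lemma~\ref{ref14}(iv) this contradicts $\bm{w}\in\Zbb^3$, so $\bm{A}=\bm{I}$.

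The substance is~(c), which I would settle by a reduction resembling a continued-fraction algorithm run on $|w_3|$. Given $\bm{w}$, first use $\angles{\bm{F},\bm{G}}$ (acting on $(w_1,w_2)$ by signed permutations, fixing $w_3$) to arrange $0\le w_1\le w_2$. If $|w_3|\le1$ one lands in $\set{(\pm1,0,0),(0,\pm1,0)}$ (when $w_3$ is even) or $\set{(\pm1,\pm1,\pm1)}$ (when odd). Otherwise $w_1,w_2\ge1$ and $(w_1,w_2)\ne(1,1)$ (a zero among $w_1,w_2$, or the pair $(1,1)$, would force $|w_3|\le1$), whence $2w_1^2+2w_2^2+(w_1-w_2)^2>4$, i.e.\ $(w_1+w_2)^2<4w_3^2$; with $|w_3|<w_1+w_2$ (which reads $-1<2w_1w_2$) this gives the two-sided bound $|w_3|<w_1+w_2<2|w_3|$. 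Applying $\bm{P}=\bm{R}_{(1,1,1)}$ when $w_3>0$ and $\bm{R}_{(1,1,-1)}$ when $w_3<0$ (both integral reflections with positive $(3,3)$-entry, hence in $\OO^\uparrow_{2,1}\Zbb$), a direct computation gives new third coordinate $3w_3-2(w_1+w_2)$, resp.\ $2(w_1+w_2)+3w_3$, of absolute value $<|w_3|$ precisely by that bound. Iterating, with a renormalization between steps, drives $|w_3|$ down to $\le1$ in finitely many steps; and each of the two finite sets above is one $\OO^\uparrow_{2,1}\Zbb$-orbit, since $\bm{F}$, $\bm{G}$ and their conjugates realize all sign/coordinate changes of $(w_1,w_2)$ while $\bm{P}$ carries $(1,1,1)$ to $(-1,-1,-1)$, reversing the sign of $w_3$ within the odd set. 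So $\OO^\uparrow_{2,1}\Zbb$ has exactly two orbits, with base points $(1,0,0)$ and $(1,1,1)$. To descend to $\SOO^\uparrow_{2,1}\Zbb$, of index~$2$ in $\OO^\uparrow_{2,1}\Zbb$, I would, whenever the reduction $\bm{B}$ has $\det\bm{B}=-1$, replace it by $\bm{g}_0\bm{B}$ where $\bm{g}_0$ is the reflection $\bm{F}$ (fixing $(1,0,0)$) or $\bm{G}$ (fixing $(1,1,1)$); with~(b) this yields simple transitivity on each parity class.

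Finally~(d): as $\Lambda$ is an isomorphism, $[\SOO^\uparrow_{2,1}\Zbb:\Lambda[\angles{F,P,J}^+]]=[\angles{F,P,G}^+:\angles{F,P,J}^+]$, and this is~$2$ because, by Gauss--Bonnet, $\angles{F,P,G}^+=\Delta(2,4,\infty)$ has covolume $\pi/2$ while $\angles{F,P,J}^+=\Delta(2,\infty,\infty)$ has covolume~$\pi$. A group acting simply transitively on a set, restricted to a subgroup of index~$n$, has exactly $n$ orbits and acts simply transitively on each (identify the set $\SOO^\uparrow_{2,1}\Zbb$-equivariantly with $\SOO^\uparrow_{2,1}\Zbb$); with $n=2$ this splits each of the two orbits into two. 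The only genuinely fiddly point I anticipate is the inequality bookkeeping in~(c); everything else is formal.
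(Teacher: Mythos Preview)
Your proof is correct and takes a genuinely different route from the paper's.  For transitivity, the paper passes to the model~(S3) of Theorem~\ref{ref13} and runs the Romik map (invoking~\cite[Theorem~2(i)]{romik08}) on the first coordinate~$\omega$, then cleans up~$\alpha$ with parabolic elements; you instead perform a direct descent on~$|w_3|$ in~$\Rbb^3$ via Lorentzian reflections, in the spirit of the Berggren--Conrad ``Pythagorean descent'' (compare~\cite{conradpd}).  For freeness, the paper appeals to the standard fact that a fuchsian group element fixing two cusps is trivial, whereas your eigenvalue/irrationality argument through Lemma~\ref{ref14}(iv) is more intrinsic.  For the final splitting, the paper exhibits four explicit pairs and shows they remain inequivalent under $\angles{F,P,J}^+$ by a cusp argument, while you deduce it formally from the index being~$2$.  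Your approach is more self-contained and elementary, avoiding any external input from~\cite{romik08}; the paper's approach has the advantage of exercising the (S1)--(S3) dictionary that drives the rest of the paper and of producing the four explicit orbit representatives $(\infty,0),(\infty,1),(-1,1),(-1,\infty)$, which are reused in the proof of Theorem~\ref{ref17}.
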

\begin{proof}
It is easy to check that each of $-\bm{F}$, $-\bm{P}$, $-\bm{G}$ preserves the parity of $w_3$; hence there are at least two orbits.

Choose $\bm{w}\in\Scal\cap\Zbb^3$ and let $(\omega,\alpha)\in(\PP^1\Qbb\times\PP^1\Qbb)\setminus(\diag)$ be the corresponding ordered pair according to Theorem~\ref{ref13}.
An appropriate power $(FP)^k$ of the parabolic matrix $FP$ (that fixes~$1$) sends $(\omega,\alpha)$ to a new pair $(\omega',\alpha')$ with $0\le\omega'\le1$. 
By~\cite[Theorem~2(i)]{romik08}, the orbit $\omega'=\omega'_0,\omega'_1,\omega'_2,\ldots$ of $\omega'$ under the Romik map ends up after finitely many steps, say the $n$th step, in one of the two parabolic fixed points $0$, $1$. For each $0\le t<n$, let
\[
A_t=\begin{cases}
JFPF,&\text{if $0<\omega'_t<1/3$};\\
JPF,&\text{if $1/3\le\omega'_t<1/2$};\\
PF,&\text{if $1/2\le\omega'_t<1$};
\end{cases}
\]
be the matrix acting at time $t$.
Then $A=FJA_{n-1}A_{n-2}\cdots A_0(FP)^k\in\angles{F,P,J}$, and $A*(\omega,\alpha)=(\omega'',\alpha'')$ is such that $\omega''\in\set{\infty,-1}$. Postcomposing $A$, if necessary, with $J$ (if $\omega''=\infty$) or with $F$ (if $\omega''=-1$), we have $A\in\angles{F,P,J}^+$.

Suppose $\omega''=\infty$. Then $\alpha''\in\Zbb$ because the point $\bm{w}''$ corresponding to $(\infty,\alpha'')$ equals $(1,\alpha'',\alpha'')$ by~\eqref{eq4}, and also equals $\Lambda(A)\bm{w}$, which is a point in $\Zbb^3$. 
This implies that an appropriate power of the parabolic matrix $PJ=\bbmatrix{1}{2}{}{1}$ maps $(\infty,\alpha'')$ either to $(\infty,0)$ or to $(\infty,1)$.
If, on the other hand, $\omega''=-1$, then the same argument with $PJ$ replaced by $(JPJ)F=\bbmatrix{2}{1}{-1}{}$ (which is parabolic fixing $-1$) yields that
a power of $JPJF$ maps $(-1,\alpha'')$ either to $(-1,1)$ or to $(-1,\infty)$.

Summing up, we have proved that the pair $(\omega,\alpha)$ is in the $\angles{F,P,J}^+$-orbit of one of the pairs $(\infty,0),(\infty,1),(-1,1),(-1,\infty)$.
Now, the rotation $GF\in\angles{F,P,G}^+$ maps the first pair to the third, and the second to the fourth. By Theorem~\ref{ref13} this means that the original point $\bm{w}$ is in the $\Lambda\bigl[\angles{F,P,G}^+\bigr]$-orbit of either $(1,0,0)$ or of $(1,1,1)$. Since $\Lambda\bigl[\angles{F,P,G}^+\bigr]=\SOO^\uparrow_{2,1}\Zbb$ by Remark~\ref{ref44}, our first claim is established.

Simple transitivity follows from the fact that both $(\infty,0)$ and $(\infty,1)$ have trivial stabilizer in $\angles{F,P,G}^+$ (because 
an element of a fuchsian group that fixes two distinct cusps must be the identity).

Finally, the pairs $(\infty,0),(\infty,1),(-1,1),(-1,\infty)$ remain distinct modulo $\angles{F,P,J}^+$. Indeed, the latter is the triangle group $\Delta(2,\infty,\infty)$, which has two distinct cusp orbits, and it is easy to check that any identification of the above four pairs would collapse these two orbits.
\end{proof}

We can now define unimodularity for circle intervals.

\begin{definition}
Let\label{ref16} $\bm{t},\bm{t}'$ be distinct rational points in $S^1$, and let $\bm{w}\in\Scal\cap\Qbb^3$ be the point corresponding to $[\bm{t},\bm{t}']$ according to Lemma~\ref{ref14}. 
If $\bm{w}\in\Zbb^3$ and $w_3$ is even (odd), then we say that $[\bm{t},\bm{t}']$ is an \newword{even} (\newword{odd}) \newword{unimodular interval}.
\end{definition}

\begin{theorem}
Let\label{ref17} $\bm{t},\bm{t}',\bm{w}$ be as in Definition~\ref{ref16}; then the following conditions are equivalent.
\begin{enumerate}
\item $[\bm{t},\bm{t}']$ is unimodular (either even or odd).
\item $\bm{R}_{\bm{w}}$ has integer entries.
\item $[\bm{t},\bm{t}']$ is the image either of $\bigl[[0,-1,1],[0,1,1]\bigr]$ or of $\bigl[[1,0,1],[0,1,1]\bigr]$ under some (necessarily unique) element of $\SOO^\uparrow_{2,1}\Zbb$.
\item $\angles{\bm{t},\bm{t}'}\in\set{-1,-2}$ (here $\bm{t},\bm{t}'$ are the canonical presentations of $\bm{t},\bm{t}'$ as pri\-mi\-ti\-ve pythagorean triples).
\end{enumerate}
If these conditions hold, then $[\bm{t},\bm{t}']$ is odd iff it is the image of
$\bigl[[1,0,1],[0,1,1]\bigr]$ iff $\angles{\bm{t},\bm{t}'}=-1$. Moreover, $\bm{R}_{\bm{w}}$ belongs to $\angles{\bm{F},\bm{P},\bm{J}}$, and the matrix $\sm{R}_{\bm{w}}\in\PSU^\pm_{1,1}\Zbb[i]$ corresponding to it under Convention~\ref{ref19} is
\begin{equation}\label{eq22}
\begin{bmatrix}
\theta & \theta' \\
1 & 1
\end{bmatrix}J
\begin{bmatrix}
\theta & \theta' \\
1 & 1
\end{bmatrix}\m,
\end{equation}
where $\theta,\theta'\in S^1\cap\Qbb(i)$ are identified with $\bm{t},\bm{t}'$ as in~\S\ref{ref9}.
\end{theorem}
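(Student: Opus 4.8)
The plan is to prove the cycle $(1)\Rightarrow(3)\Rightarrow(4)\Rightarrow(1)$ and, separately, the equivalence $(1)\Leftrightarrow(2)$, keeping the even/odd dichotomy aligned throughout; the ``moreover'' clauses then follow. The first step is to put the two reference intervals into de Sitter coordinates: substituting $\bm{t}=[0,-1,1]$, $\bm{t}'=[0,1,1]$ (resp.\ $\bm{t}=[1,0,1]$, $\bm{t}'=[0,1,1]$) into~\eqref{eq1} gives $\bm{w}=(1,0,0)$ (resp.\ $\bm{w}=(1,1,1)$), which are exactly the two base points of Theorem~\ref{ref15}, the first with $w_3$ even and the second with $w_3$ odd; recall also that $\bm{J}=\bm{R}_{(1,0,0)}$ and $\bm{P}=\bm{R}_{(1,1,1)}$. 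For $(1)\Leftrightarrow(3)$: on $\SOO^\uparrow_{2,1}\Zbb=\langle\bm{F},\bm{P},\bm{G}\rangle^+$ (Remark~\ref{ref44}) the de Sitter action of Theorem~\ref{ref13}(S1) coincides with the linear action, every generator having determinant $1$; so if $\bm{w}\in\Scal\cap\Zbb^3$ then by Theorem~\ref{ref15} there is a unique $\bm{A}\in\SOO^\uparrow_{2,1}\Zbb$ with $\bm{w}=\bm{A}\bm{w}_0$, $\bm{w}_0\in\set{(1,0,0),(1,1,1)}$ according to the parity of $w_3$, and Lemma~\ref{ref14}(iii) rewrites this as $[\bm{t},\bm{t}']=\bm{A}\bigl[I_{\bm{w}_0}\bigr]$, i.e.\ $(3)$. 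Conversely $(3)$ and Lemma~\ref{ref14}(iii) give $I_{\bm{w}}=I_{\bm{A}\bm{w}_0}$, hence $\bm{w}=\bm{A}\bm{w}_0\in\Zbb^3$ by injectivity of $\bm{w}\mapsto I_{\bm{w}}$; since $\SOO^\uparrow_{2,1}\Zbb$ preserves the parity of $w_3$ (check the generators), the even/odd labels match up.

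For $(3)\Rightarrow(4)$: an element of $\SOO^\uparrow_{2,1}\Zbb$ lies in $\OO^\uparrow_{2,1}\cap\GL_3\Zbb$, hence preserves the future null cone and primitivity, so it carries the canonical pythagorean presentations of the reference endpoints to those of $\bm{t},\bm{t}'$; as it preserves $\angles{\argomento,\argomento}$ we get $\angles{\bm{t},\bm{t}'}=\angles{(0,-1,1),(0,1,1)}=-2$ in the even case and $\angles{\bm{t},\bm{t}'}=\angles{(1,0,1),(0,1,1)}=-1$ in the odd case. For $(4)\Rightarrow(1)$: in~\eqref{eq1} the numerator $\bm{Lt}'\times\bm{Lt}$ is integral, so $\angles{\bm{t},\bm{t}'}=-1$ gives $\bm{w}\in\Zbb^3$ at once; if $\angles{\bm{t},\bm{t}'}=-2$, use that a primitive pythagorean triple is $\equiv(0,1,1)$ or $(1,0,1)\pmod2$, that $\angles{\bm{t},\bm{t}'}\equiv0\pmod2$ forces $\bm{t}\equiv\bm{t}'\pmod2$, and therefore $\bm{Lt}'\times\bm{Lt}\equiv\bm{Lt}'\times\bm{Lt}'=\bm{0}\pmod2$, so again $\bm{w}\in\Zbb^3$. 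This closes the cycle and forces the three dichotomies ($w_3$ even $\leftrightarrow$ even reference interval $\leftrightarrow$ $\angles{\bm{t},\bm{t}'}=-2$) to be mutually consistent, which is the displayed ``if these conditions hold'' clause.

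For $(1)\Leftrightarrow(2)$: since $\bm{w}\in\Scal$, \eqref{eq23} becomes $\bm{R}_{\bm{w}}=\bm{I}-2\bm{w}\bm{w}^\top\bm{L}$, so $\bm{R}_{\bm{w}}$ is integral iff $2w_iw_j\in\Zbb$ for all $i,j$; writing $\bm{w}=\bm{p}/q$ in lowest terms this gives $q^2\mid 2p_i^2$ for each $i$, and comparing $\ell$-adic valuations for each prime $\ell$ against $\gcd(p_1,p_2,p_3,q)=1$ forces $q=1$, i.e.\ $(1)$. For the reverse implication it suffices to show $\bm{R}_{\bm{w}}\in\langle\bm{F},\bm{P},\bm{J}\rangle\subseteq\OO^\uparrow_{2,1}\Zbb$; this also settles the ``$\bm{R}_{\bm{w}}\in\langle\bm{F},\bm{P},\bm{J}\rangle$'' clause. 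I obtain it by refining the proof of Theorem~\ref{ref15}, which in fact places $\bm{w}$ in the $\langle\bm{F},\bm{P},\bm{J}\rangle^+$-orbit (again determinant $1$, so linear and de Sitter actions agree) of one of the four de Sitter points $(1,0,0)$, $(1,1,1)$, $(0,1,0)$, $(-1,1,1)$, whose reflections $\bm{J}$, $\bm{P}$, $\bm{F}$, $\bm{J}\bm{P}\bm{J}$ all lie in $\langle\bm{F},\bm{P},\bm{J}\rangle$; conjugating by the word realizing the reduction keeps us inside $\langle\bm{F},\bm{P},\bm{J}\rangle$, and passing to the $C$-conjugate and invoking Theorem~\ref{ref7} puts $\sm{R}_{\bm{w}}$ in $\langle\sm{F},\sm{P},\sm{J}\rangle=\PSU^\pm_{1,1}\Zbb[i]$.

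It remains to verify formula~\eqref{eq22}. Because $\bm{w}$ is proportional to $\bm{Lt}'\times\bm{Lt}$, the relevant scalar triple products vanish, $\angles{\bm{w},\bm{t}}=\angles{\bm{w},\bm{t}'}=0$, so the polar hyperplane of $\bm{w}$ --- fixed pointwise by $\bm{R}_{\bm{w}}$ --- meets $\partial\Kcal$ exactly in $\bm{t},\bm{t}'$; hence $\bm{R}_{\bm{w}}$ is the reflection of $\Kcal$ in the geodesic with endpoints $\bm{t},\bm{t}'$, and transporting through diagram~\eqref{eq14}, $\sm{R}_{\bm{w}}$ is the reflection of $\Dcal$ in the geodesic with endpoints $\theta,\theta'$. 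Now let $N$ be the right-hand side of~\eqref{eq22}; as an ordinary matrix product $N^2=I$ since $J^2=I$, so (the action of $\PSU^\pm_{1,1}\Cbb$ being genuine) $N$ is an involution; a direct computation using $\abs\theta=\abs{\theta'}=1$ shows that $N$ fixes $\theta$ and $\theta'$ and is projectively of the form $\ppmatrix{\alpha}{\beta}{\bar\beta}{\bar\alpha}$ with $\abs\alpha^2-\abs\beta^2<0$, hence $N\in\PSU^\pm_{1,1}\Cbb$ and acts on $\Dcal$ as an orientation-reversing isometry. An orientation-reversing involutive isometry of $\Dcal$ is the reflection in its geodesic of fixed points, which here joins $\theta$ and $\theta'$, so $N=\sm{R}_{\bm{w}}$. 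The main friction throughout is the bookkeeping --- keeping the two reference intervals aligned with the correct base points and parities across $(1)\Leftrightarrow(3)\Leftrightarrow(4)$ --- together with the two short parity/valuation arguments; the only genuine computation is the $\PSU^\pm_{1,1}$-membership check for~\eqref{eq22}.
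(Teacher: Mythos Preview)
Your proof is correct and in several places cleaner than the paper's. The main differences are:

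\begin{itemize}
\item[\textbf{Cycle structure.}] The paper argues $(1)\Rightarrow(2)\Rightarrow(3)\Rightarrow(4)\Rightarrow(1)$, with $(2)\Rightarrow(3)$ the substantive step: it reduces $(\omega,\alpha)$ via the Romik map to a pair $(\infty,q)$, then uses integrality of $\bm{R}_{\bm{w}}$ to force $q\in\Zbb$. You instead invoke Theorem~\ref{ref15} directly for $(1)\Leftrightarrow(3)$, and give a separate elementary valuation argument for $(2)\Rightarrow(1)$ (writing $\bm{w}=\bm{p}/q$, using $q^2\mid 2p_i^2$ and $\gcd(p_1,p_2,p_3)=1$ to force $q=1$). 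This bypasses the reduction entirely and is shorter.

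\item[\textbf{Formula~\eqref{eq22}.}] The paper verifies by explicit computation that the matrix in~\eqref{eq22} has entries in $\Zbb[i]$, using the factorization $\theta=\kappa\mu/\bar\mu$ and the fact that the determinant $\delta=\kappa\mu\bar\nu-\lambda\bar\mu\nu$ divides~$2$; equality with $\sm{R}_{\bm{w}}$ then follows because their quotient lies in a fuchsian group and fixes two cusps. Your argument is purely geometric: you check that $N$ is an orientation-reversing isometric involution of~$\Dcal$ fixing $\theta,\theta'$, hence is the reflection in the geodesic joining them, hence equals $\sm{R}_{\bm{w}}$. Integrality of the entries then comes for free from the already-established $\sm{R}_{\bm{w}}\in\PSU^\pm_{1,1}\Zbb[i]$. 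This avoids the arithmetic of~\S\ref{ref9}, at the cost of appealing to the classification of orientation-reversing involutions of~$\Dcal$.
\end{itemize}

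One small point worth making explicit in your write-up: when you say ``$N^2=I$ since $J^2=I$'', this is a matrix identity, but you use it to conclude that $N$ acts as an involution on $\Dcal$. This is correct because the group action of $\PSU^\pm_{1,1}\Cbb$ on $\Dcal$ is a genuine action (composition of maps corresponds to matrix product), but since the action formula for the $\abs{\alpha}^2-\abs{\beta}^2=-1$ case involves complex conjugation, a reader might pause here.
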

\begin{proof}
(1) $\Rightarrow$ (2) Since $\angles{\bm{w},\bm{w}}=1$, this is immediate from the explicit formula for 
$\bm{R}_{\bm{w}}$ in~\eqref{eq23}.

\noindent (2) $\Rightarrow$ (3) Let
\[
(\omega,\alpha)=\bigl((\mu\circ\upsilon)(\bm{t}'),(\mu\circ\upsilon)(\bm{t})\bigr)\in(\PP^1\Qbb\times\PP^1\Qbb)\setminus(\diag)
\]
(see Lemma~\ref{ref14}(ii)). Then, as in the proof of Theorem~\ref{ref15}, we construct $A\in\angles{F,P,J}^+$ such that $A*(\omega,\alpha)$ equals either $(\infty,\alpha'')$ or $(-1,\alpha'')$. Since $FG*(-1)=\infty$, there exists $B\in\angles{F,P,G}^+$ with $B*(\omega,\alpha)=(\infty,q)$, for some $q\in\Qbb$. Hence, $\Lambda(B)\bm{w}=(1,q,q)=\bm{v}$.
We then have
\[
\Lambda(B)\bm{R}_{\bm{w}}\Lambda(B)\m=
\bm{R}_{\Lambda(B)\bm{w}}=\bm{R}_{\bm{v}}=
\bm{I}-\frac{2}{\angles{\bm{v},\bm{v}}}\bm{v}\,\bm{v}^\top\,\bm{L},
\]
and the leftmost entry in the display is a matrix with integer entries. 
Multiplying through by $-1$,
subtracting the identity matrix $\bm{I}$, and multiplying
by $\bm{L}$ on the right, we see that the matrix
\[
\frac{2}{\angles{\bm{v},\bm{v}}}\bm{v}\bm{v}^\top=
2\begin{pmatrix}
1 & q & q \\
q & q^2 & q^2 \\
q & q^2 & q^2
\end{pmatrix}
\]
must have integer entries. This implies that the denominator of the rational number~$q$ must divide $2$, and so must do the denominator of $q^2$; therefore $q$ is an integer.
Thus, as in the proof of Theorem~\ref{ref15}, an appropriate power $(\bm{P}\bm{J})^k$ will map $(1,q,q)$ either to $(1,0,0)$ or to $(1,1,1)$; therefore, $\Lambda\bigl((PJ)^kB\bigr)\bm{w}\in\set{(1,0,0),(1,1,1)}$.
Now, $(PJ)^kB\in\angles{F,P,G}^+$, and $\Lambda$ equals the ``bold'' isomorphism on $\angles{F,P,G}^+$, with range $\SOO^\uparrow_{2,1}\Zbb$. Thus $\bm{w}$ is the image either of $(1,0,0)$ or of $(1,1,1)$ under some element of $\SOO^\uparrow_{2,1}\Zbb$, a statement equivalent to~(3) by Remark~\ref{ref44}.

\noindent (3) $\Rightarrow$ (4) This is clear, since $\angles{(0,-1,1),(0,1,1)}=-2$
and $\angles{(1,0,1),(0,1,1)}=-1$.

\noindent (4) $\Rightarrow$ (1) If $\angles{\bm{t},\bm{t}'}=-1$, then $\bm{w}\in\Zbb^3$ by the definition of $\bm{w}$ in  Lemma~\ref{ref14}; assume then $\angles{\bm{t},\bm{t}'}=-2$. In every pythagorean triple one of the legs must be even, and the other leg and the hypotenuse both odd. The condition $t_1t'_1+t_2t'_2-t_3t'_3=-2$ forces $t_1,t'_1$ to be both even and $t_2,t'_2$ both odd (or conversely). Since $t_3,t'_3$ are surely both odd, all the entries in $\bm{L}\bm{t}'\times\bm{L}\bm{t}$ must be even; thus $\bm{w}\in\Zbb^3$.

The stated characterization of $[\bm{t},\bm{t}']$ being even/odd is clear from the previous proof.

By Theorem~\ref{ref15}, $\bm{w}$ is in the $\angles{\bm{F},\bm{P},\bm{J}}^+$-orbit of one of $(1,0,0)$, $(1,1,1)$, $(0,1,0)$, $(-1,1,1)$. Hence $\bm{R}_{\bm{w}}$ is a conjugate either of $\bm{R}_{(1,0,0)}=\bm{J}$, or of $\bm{R}_{(1,1,1)}=\bm{P}$, or of
$\bm{R}_{(0,1,0)}=\bm{F}$, or of
$\bm{R}_{(-1,1,1)}=\bm{J}\bm{P}\bm{J}$ by a matrix in
$\angles{\bm{F},\bm{P},\bm{J}}^+$; in any case, it belongs to $\angles{\bm{F},\bm{P},\bm{J}}$.

Finally, let $\sm{S}$ be the matrix in~\eqref{eq22}. By direct computation
\[
\sm{S}=(\theta-\theta')\m
\begin{bmatrix}
-\theta-\theta' & 2\theta\theta' \\
-2 & \theta+\theta'
\end{bmatrix},
\]
which has the form $\bbmatrix{\alpha}{\beta}{\bar\beta}{\bar\alpha}$, as can easily be checked; hence $\sm{S}\in\PSU^\pm_{1,1}\Cbb$. If we can prove that $\sm{S}$ has entries in $\Zbb[i]$, then necessarily $\sm{S}=\sm{R}_{\bm{w}}$.
Indeed, the matrix $\sm{S}\m\sm{R}_{\bm{w}}$ would then belong to the fuchsian group $\PSU_{1,1}\Zbb[i]$, and would fix the two cusps $\theta,\theta'$; hence, it must be the identity matrix.

Write uniquely $\theta=\kappa\mu/\bar\mu$, $\theta'=\lambda\nu/\bar\nu$, as explained in~\S\ref{ref9}.
By Theorem~\ref{ref15},
there exists $\sm{A}\in\angles{\sm{F},\sm{P},\sm{J}}^+=
\PSU_{1,1}\Zbb[i]$ such that
\[
\sm{A}
\begin{bmatrix}
\kappa\mu & \lambda\nu  \\
\bar\mu & \bar\nu  
\end{bmatrix}
\in
\biggl\{
\begin{bmatrix}
-i & i \\
1 & 1
\end{bmatrix},
\begin{bmatrix}
1 & i \\
1 & 1
\end{bmatrix},
\begin{bmatrix}
1 & -1 \\
1 & 1
\end{bmatrix},
\begin{bmatrix}
i & -1 \\
1 & 1
\end{bmatrix}
\biggr\}.
\]
This implies that the determinant $\delta=\kappa\mu\bar\nu-\lambda\bar\mu\nu$ divides $2$ in $\Zbb[i]$.
Since
\[
\begin{bmatrix}
\theta & \theta' \\
1 & 1
\end{bmatrix}=
\begin{bmatrix}
\kappa\mu & \lambda\nu \\
\bar\mu & \bar\nu
\end{bmatrix}
\begin{bmatrix}
\bar\mu & \\
 & \bar\nu
\end{bmatrix}\m,
\]
we have
\begin{equation*}
\begin{split}
\begin{bmatrix}
\theta & \theta' \\
1 & 1
\end{bmatrix}J
\begin{bmatrix}
\theta & \theta' \\
1 & 1
\end{bmatrix}\m
&=
\begin{bmatrix}
\kappa\mu & \lambda\nu \\
\bar\mu & \bar\nu
\end{bmatrix}
J
\begin{bmatrix}
\kappa\mu & \lambda\nu \\
\bar\mu & \bar\nu
\end{bmatrix}\m \\
&=
\delta\m
\begin{bmatrix}
-\kappa\mu\bar\nu-\lambda\bar\mu\nu & 2\kappa\lambda\mu\nu \\
-2\bar\mu\bar\nu & \lambda\bar\mu\nu+\kappa\mu\bar\nu
\end{bmatrix} \\
&=
\delta\m
\begin{bmatrix}
\delta-2\kappa\mu\bar\nu & 2\kappa\lambda\mu\nu \\
-2\bar\mu\bar\nu & \delta+2\lambda\bar\mu\nu
\end{bmatrix},
\end{split}
\end{equation*}
which has entries in $\Zbb[i]$.
\end{proof}

\section{Billiard maps}\label{ref3}

Having arranged our tools in working order, we proceed to our core objects.

\begin{definition}
A\label{ref28} \newword{unimodular partition} of the unit circle $S^1$ is a counterclockwise cyclically ordered $m$-uple $\bm{t}_0,\bm{t}_1,\ldots,\bm{t}_{m-1}$ of pythagorean triples, of cardinality at least $3$, such that each interval $[\bm{t}_a,\bm{t}_{a+1}]$ is unimodular (including $[\bm{t}_{m-1},\bm{t}_0]$; here and in the following we are writing indices modulo~$m$). We will write $\bm{w}_a=(\bm{L}\bm{t}_{a+1}\bm{\times}\bm{L}\bm{t}_a)/\angles{\bm{t}_{a+1},\bm{t}_a}\in\Scal$ for the points defined by Lemma~\ref{ref14}.
\end{definition}

According to our conventions, and without further notice, we will often switch to a complex-numbers setting, thus writing
$\theta_a$ for $\bm{t}_a$.

For every $a$, let $l_a$ be the geodesic in $\Dcal$ of ideal endpoints $\theta_a$ and $\theta_{a+1}$; of the two halfplanes determined by $l_a$, let $D_a$ be the one containing all other $l_b$, for $b\not=a$. 
Then $D=\bigcap\set{D_a:a=0,\ldots,m-1}$ is a polygon with sides $l_0,\ldots,l_{m-1}$ and ideal vertices $\theta_0,\ldots,\theta_{m-1}$, on which we can play billiards in the usual way.
Namely, any unit velocity vector attached to an infinitesimal ball in the interior of $D$ determines an oriented geodesic $g$ starting from an ideal point $\rho$ and ending at $\sigma$. The ball travels along $g$ at unit speed, until it hits the side $l_a$ determined by the half-open interval $[\theta_a,\theta_{a+1})$ to which $\sigma$ belongs (unless $\sigma$ is one of the vertices, in which case the ball is lost at infinity). When hitting $l_a$, the ball rebounces with angle of reflection equal to the angle of incidence, and continues its trajectory along the geodesic $g'$ which is the image of $g$ with respect to the reflection with mirror $l_a$. This reflection is induced by the matrix $\sm{R}_{\bm{w}_a}$ in~\eqref{eq22}
(with $\theta=\theta_a$ and $\theta'=\theta_{a+1}$), and thus has ideal initial and terminal points $\sm{R}_{\bm{w}_a}*\rho$ and $\sm{R}_{\bm{w}_a}*\sigma$, respectively. All of this naturally suggests the following standard definition~\cite[Chapter~6]{CornfeldFomSi82}, \cite[\S IV.1]{chernovmar01}.

\begin{definition}
The\label{ref23} \newword{billiard map} determined by the unimodular partition $\theta_0,\ldots,\theta_{m-1}$ is the map $\widetilde{B}$ from $(S^1\times S^1)\setminus(\diag)$ to itself defined by $\widetilde{B}(\sigma,\rho)=(\sm{A}_a*\sigma,\sm{A}_a*\rho)$, where $a$ is the index of the unique half-open interval
$I_a={[}\theta_a,\theta_{a+1})$ containing~$\sigma$, and $\sm{A}_a=\sm{R}_{\bm{w}_a}$.
The map $\widetilde{B}$ is continuous, and determines a topological dynamical system. We denote by $(S^1,B)$ the factor system naturally induced by the projection $(\sigma,\rho)\mapsto\sigma$; in short, $B(\sigma)=\sm{A}_a*\sigma$ for $\sigma\in I_a$.
\end{definition}

We will freely use Theorem~\ref{ref13} to conjugate $\widetilde B$ to a map acting on any of the spaces (S1)--(S6); 
we will still denote the conjugated map by~$\widetilde B$, slightly abusing notation.
For ease of visualization (and crucially in \S\ref{ref4} and \S\ref{ref38}) we will also conjugate $\widetilde{B}$ and $B$ to maps on ${[}0,1)^2\setminus(\diag)$ and $[0,1)$, respectively; these last conjugations are realized through the normalized (i.e., the image is divided by $2\pi$) argument function $\arg:\partial\Dcal\to{[}0,1)$.

\begin{example}
The\label{ref22} ordered $6$-uple
\[
\theta_0=1,\,\theta_1=\frac{12+5i}{13},\,
\theta_2=\frac{4+3i}{5},\,\theta_3=i,\,
\theta_4=-i,\theta_5=\frac{4-3i}{5},
\]
is a unimodular partition, whose corresponding billiard table is shown in Figure~\ref{fig3} (left).
The matrices $\sm{A}_0,\ldots,\sm{A}_5$ are
\begin{align*}
&\begin{bmatrix}
-5i & -1+5i \\
-1-5i & 5i
\end{bmatrix},
& &\begin{bmatrix}
-8i & -4+7i \\
-4-7i & 8i
\end{bmatrix},
& &\begin{bmatrix}
-2i & -2+i \\
-2-i & 2i
\end{bmatrix},\\
&\begin{bmatrix}
 & -i \\
i & 
\end{bmatrix}=\sm{J},
& &\begin{bmatrix}
-2i & 2+i \\
2-i & 2i
\end{bmatrix},
& &\begin{bmatrix}
-3i & 1+3i \\
1-3i & 3i
\end{bmatrix}.
\end{align*}

\begin{figure}
\begin{tikzpicture}[scale=2.4]
\coordinate (t0) at (1,0);
\coordinate (t1) at (12/13,5/13);
\coordinate (t2) at (4/5,3/5);
\coordinate (t3) at (0,1);
\coordinate (t4) at (0,-1);
\coordinate (t5) at (4/5,-3/5);
\coordinate (p0) at (4/5,1/5);
\coordinate (p1) at (28/37,17/37);
\coordinate (p2) at (1/2,1/2);
\coordinate (p4) at (4/13,-7/13);
\coordinate (p5) at (12/17,-3/17);
\draw (0,0) circle [radius=1cm];
\draw[line width=0.5mm] (t0) to[arc through cw=(p0)] (t1);
\draw[line width=0.5mm] (t1) to[arc through cw=(p1)] (t2);
\draw[line width=0.5mm] (t2) to[arc through cw=(p2)] (t3);
\draw[line width=0.5mm] (t3) to (t4);
\draw[line width=0.5mm] (t4) to[arc through cw=(p4)] (t5);
\draw[line width=0.5mm] (t5) to[arc through cw=(p5)] (t0);

\node[right] at (t0) {$\theta_0$};
\node[right] at (t1) {$\theta_1$};
\node[above right] at (t2) {$\theta_2$};
\node[above] at (t3) {$\theta_3$};
\node[below] at (t4) {$\theta_4$};
\node[right] at (t5) {$\theta_5$};
\end{tikzpicture}
\hspace{0.6cm}
\includegraphics[width=5.5cm]{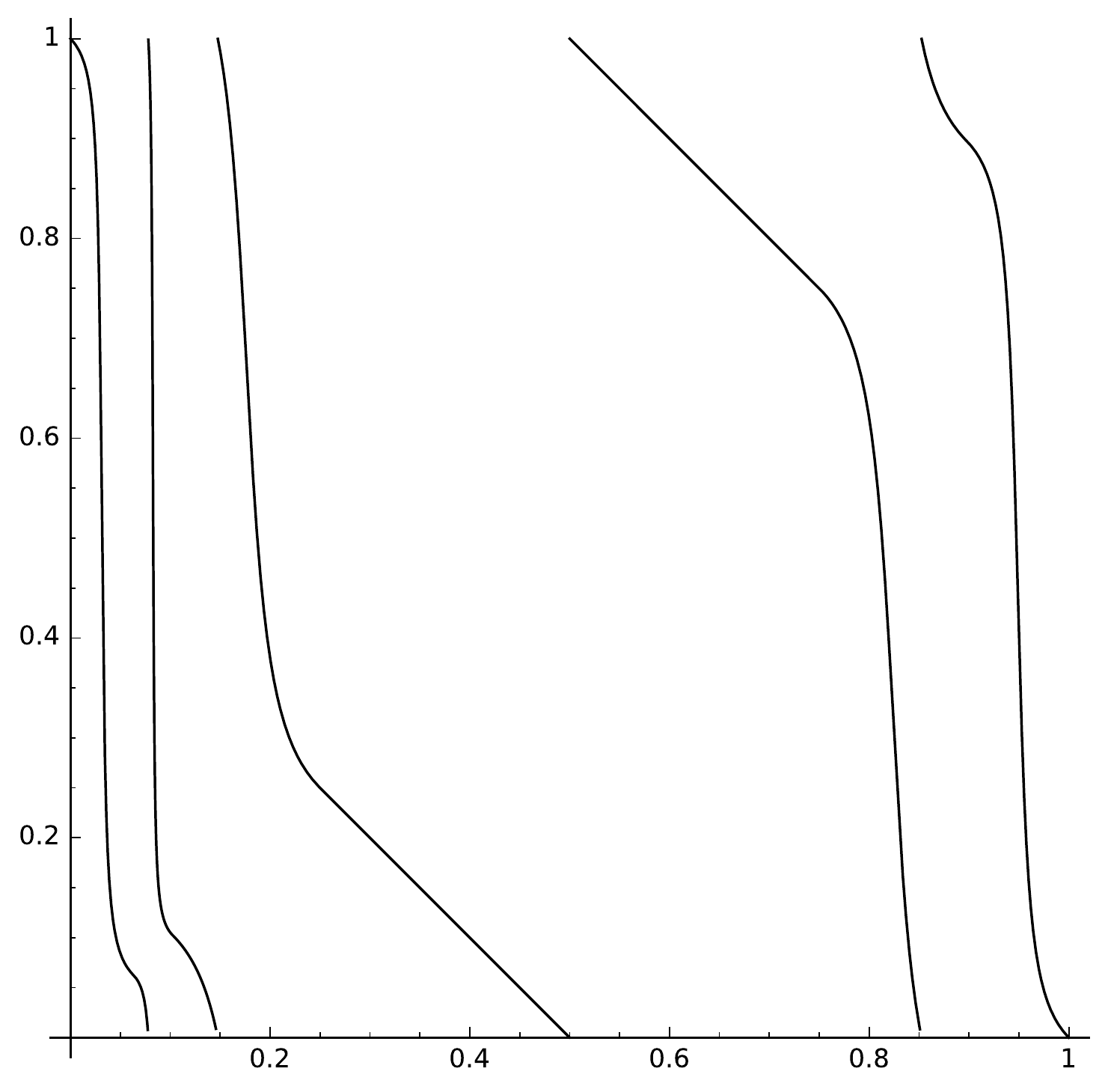}
\caption{A unimodular billiard table and its associated factor map $B$.}
\label{fig3}
\end{figure}

The graph of the $\arg$-conjugate of $B$ is shown in Figure~\ref{fig3} (right); it requires caution in two respects. First, $B$ is a \emph{continuous} map on $S^1$ and, second, it is piecewise-defined via \emph{six} pieces, whose endpoints are given by the six $B$-fixed points ($0=1$ included).
We plot in Figure~\ref{fig4} (left) $5000$ points of the $\widetilde B$-orbit of a ``typical'' point in the de Sitter space~$\Scal$, and in Figure~\ref{fig4} (right) their $\arg$-images.
The cluster points apparent in this latter figure correspond to the six fixed points cited above. These are indifferent fixed points (i.e., the derivative of $B$ has absolute value $1$), and this forces the unique $B$-invariant measure absolutely continuous with respect to the Lebesgue measure to be infinite; see Theorem~\ref{ref32} and Figure~\ref{fig5}.
Note that $\widetilde{B}$ is not injective: the points $(\theta_0,\sm{A}_0*\theta_2)$ and $(\sm{A}_2*\theta_0,\theta_2)$ are different, but both get mapped to $(\theta_0,\theta_2)$ (see however Theorem~\ref{ref26}(i)).

\begin{figure}
\includegraphics[width=6.2cm]{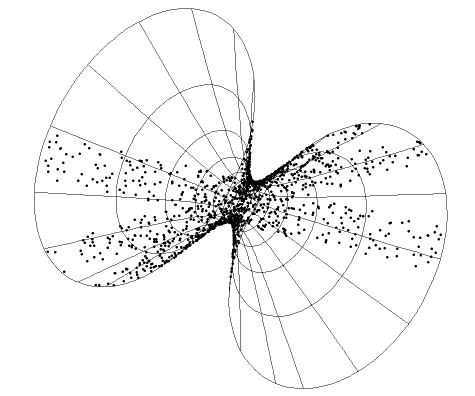}
\quad
\includegraphics[width=5.2cm]{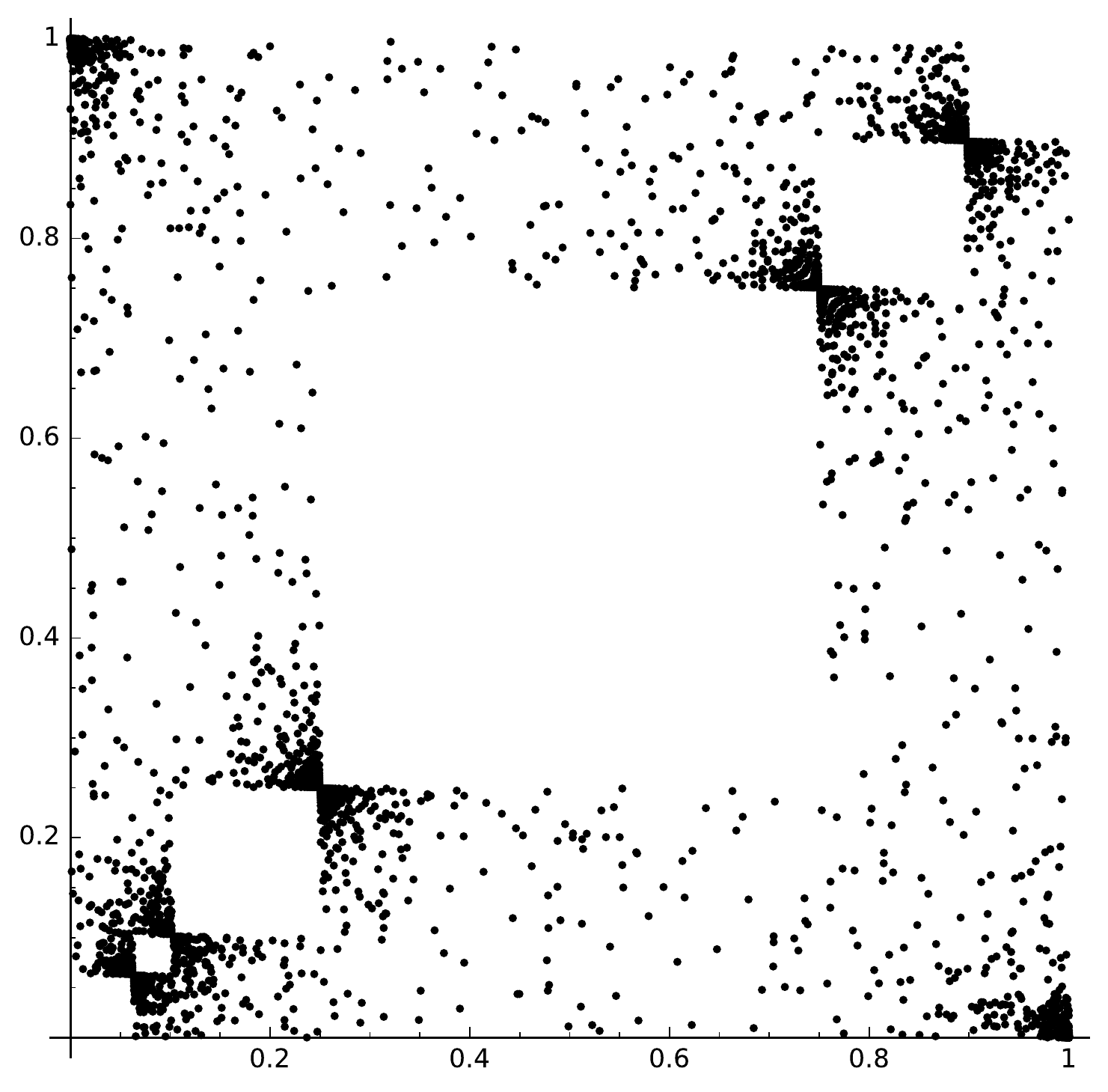}
\caption{A typical $\widetilde B$-orbit on the de Sitter space and its $\arg$-image}
\label{fig4}
\end{figure}
\end{example}

We let $\varGamma^\pm_B$ be the group generated by $\sm{A}_0,\ldots,\sm{A}_{m-1}$, and $\varGamma_B=\varGamma^\pm_B\cap\PSU_{1,1}\Zbb[i]$ the associated fuchsian group.
By conjugating with an appropriate element of $\PSU_{1,1}\Zbb[i]$ we always assume, without loss of generality, that $\theta_0=1$.
As noted in~\S\ref{ref2}, $\varGamma^\pm_B$ admits the presentation $\angles{x_0,\ldots,x_{m-1}\mid x_0^2=x_1^2=\cdots=x_{m-1}^2=1}$, and hence is isomorphic to the free product of $m$ copies of the group of order two. Equivalently stated, each element of $\varGamma^\pm_B$ can be uniquely written as a word in the generators $\sm{A}_0,\ldots,\sm{A}_{m-1}$, subject to the only condition that the same generator does not appear in two consecutive positions.
Since $D$ has finite hyperbolic area, $\varGamma_B$ and $\varGamma^\pm_B$ have finite index in $\PSU^\pm_{1,1}\Zbb[i]$.

\begin{definition}
Let\label{ref35} $B,I_0,\ldots,I_{m-1}$ be as in Definition~\ref{ref23}. For each $t=0,1,2,\ldots$, let $a_t$ be determined by $B^t(\sigma)\in I_{a_t}$; the point $\varphi(\sigma)=a_0a_1a_2\ldots=\abf$ in the Cantor space $\set{0,\ldots,m-1}^\omega$ is the \newword{$B$-symbolic sequence} of $\sigma$.
\end{definition}

\begin{lemma}
The\label{ref24} $B$-symbolic-sequence map $\varphi:S^1\to\set{0,\ldots,m-1}^\omega$ is injective. Its range is the set of all sequences $\abf$ such that:
\begin{itemize}
\item[(i)] if $a_t=a_{t+1}$ for some $t$, then $a_t=a_{t+h}$ for every $h\ge0$;
\item[(ii)] for any $a\in\set{0,\ldots,m-1}$, the tail of $\abf$ is neither of the form $\overline{a(a+1)}$, nor of the form $(a-1)\overline{a}$ (the bar denoting periodicity).
\end{itemize}
\end{lemma}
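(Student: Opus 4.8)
The plan is to first nail down the elementary geometry of $B$, then prove the two inclusions (range $\subseteq$ the described set, and the described set $\subseteq$ range) together with injectivity, isolating one genuinely hard point.

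\emph{Setup.} Each $\sm{A}_a$ is an involution of $\overline{\Dcal}$ whose restriction to $S^1$ is an involution fixing the ideal endpoints $\theta_a,\theta_{a+1}$ of the mirror $l_a$. Since $\sm{A}_{a-1}(\theta_a)=\theta_a=\sm{A}_a(\theta_a)$, the map $B$ is continuous, its fixed set is exactly $\{\theta_0,\dots,\theta_{m-1}\}$, and $B$ agrees on $\bar I_a=[\theta_a,\theta_{a+1}]$ with $\sm{A}_a$, which carries $\bar I_a$ homeomorphically onto the complementary arc $\overline{S^1\setminus I_a}$. I will use three consequences throughout: $B|_{I_a}$ is injective; $B(I_a)\cap I_a=\{\theta_a\}$ (because $\sm{A}_a(I_a)$ is the half-open arc $(\theta_{a+1},\theta_a]$); and $\theta_a\notin B(I_{a-1})$ (because $\sm{A}_{a-1}(I_{a-1})=(\theta_a,\theta_{a-1}]$).

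\emph{Necessity of (i) and (ii).} For (i): if $a_t=a_{t+1}$ then $B^{t+1}(\sigma)\in B(I_{a_t})\cap I_{a_t}=\{\theta_{a_t}\}$, a fixed point, so $a_{t+h}=a_t$ for all $h\ge 0$. For the tail $(a-1)\overline a$: the two consecutive $a$'s and the previous step give $B^{t+2}(\sigma)=\theta_a$, whence $B^{t+1}(\sigma)=\theta_a$ (injectivity of $\sm{A}_a$ together with $\sm{A}_a(\theta_a)=\theta_a$), so $\theta_a\in B(I_{a-1})$, contradicting the setup. For the tail $\overline{a(a+1)}$: passing to a forward iterate, assume $\varphi(\sigma)=\overline{a(a+1)}$; then $\sigma\in I_a$, $B\sigma\in I_{a+1}$, and, each $B^{2k}(\sigma)$ having the same itinerary, inductively $B^{2k}(\sigma)=\Pi^k(\sigma)$ with $\Pi=\sm{A}_{a+1}\sm{A}_a$. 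As $l_a$ and $l_{a+1}$ are distinct geodesics asymptotic at $\theta_{a+1}$, $\Pi$ is parabolic with fixed point $\theta_{a+1}$; conjugating $\theta_{a+1}$ to $\infty$ in the half-plane model turns $\Pi$ into a nonzero translation and $I_a$ into a half-ray which, by the cyclic order of $\theta_a,\theta_{a+1},\theta_{a+2},\dots$, points opposite to the translation, so $\Pi^k(\sigma)\notin I_a$ for $k\gg0$ — contradiction. Hence $\varphi$ lands in the set described by (i) and (ii).

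\emph{Injectivity.} Suppose $\varphi(\sigma)=\varphi(\rho)=\abf$; by the previous step $\abf$ satisfies (i), (ii). If $\abf$ has a repeat, then $B^{t+1}(\sigma)=\theta_{a_t}=B^{t+1}(\rho)$ as above, and applying the injective branches $(B|_{I_{a_s}})^{-1}$ downward in $s$ yields $\sigma=\rho$. If $\abf$ has no repeat, set $h_n=\sm{A}_{a_0}\sm{A}_{a_1}\cdots\sm{A}_{a_{n-1}}$ and $C_n=\overline{[a_0\cdots a_n]}=h_n(\bar I_{a_n})$; since $a_j\ne a_{j+1}$ gives $\sm{A}_{a_j}(\bar I_{a_{j+1}})\subseteq\bar I_{a_j}$, the $C_n$ are nested closed arcs containing both $\sigma$ and $\rho$, so it suffices to show $\bigcap_n C_n$ is a single point. \emph{This is the one hard point.} The infinite word $\sm{A}_{a_0}\sm{A}_{a_1}\cdots$ is reduced in the free product $\varGamma^\pm_B=\langle\sm{A}_0\rangle*\cdots*\langle\sm{A}_{m-1}\rangle$, so the $h_n$ are pairwise distinct and, by proper discontinuity of the lattice $\varGamma^\pm_B$, the tiles $h_n(D)$ eventually leave every compact subset of $\Dcal$; cofiniteness of $\varGamma^\pm_B$ then forces the Euclidean diameter of $\overline{h_n(D)}$ to $0$, so in particular the two endpoints $h_n(\theta_{a_n})$, $h_n(\theta_{a_n+1})$ of $C_n$ (ideal vertices of that tile) coalesce, which with the nesting makes $\bigcap_n C_n$ a point. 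The rest of the lemma is finite bookkeeping; I expect the verification that $\operatorname{diam}\overline{h_n(D)}\to0$ (equivalently, that the $C_n$ shrink) to be the main obstacle, since $D$ is an ideal polygon and one must rule out excursions that march out along a cusp — which is precisely where the finite covolume of $\varGamma^\pm_B$, and ultimately condition (ii), do their work.

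\emph{Surjectivity.} Conversely let $\abf$ satisfy (i), (ii). If $\abf$ is eventually constant, say $a_t=a_N$ for $t\ge N$ with $a_j\ne a_{j+1}$ for $j<N$, I would build the preimage by setting $B^N(\sigma)=\theta_{a_N}$ and pulling back through $\sm{A}_{a_{N-1}},\dots,\sm{A}_{a_0}$; the exclusion of the tail $(a-1)\overline a$ (i.e.\ $a_{N-1}\ne a_N-1$) is exactly what makes $\sm{A}_{a_{N-1}}(\theta_{a_N})$ fall in the \emph{interior} of $I_{a_{N-1}}$, and then each subsequent pullback of a non-vertex is again a non-vertex, so the construction never stalls and produces a point with itinerary $\abf$. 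If $\abf$ is not eventually constant, the nested compact arcs $C_n$ have nonempty intersection; for $x\in\bigcap_n C_n$ one has $B^t(x)\in\bar I_{a_t}$ for all $t$, and if $B^t(x)$ ever equalled the one point $\theta_{a_t+1}$ of $\bar I_{a_t}\setminus I_{a_t}$ then — $\theta_{a_t+1}$ being fixed and lying in $\bar I_{a_s}$ for all $s\ge t$ — the tail of $\abf$ would alternate between $a_t$ and $a_t+1$, against (ii); hence $\varphi(x)=\abf$. Thus $\varphi$ is onto the described set, completing the proof.
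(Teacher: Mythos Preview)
Your overall architecture is correct and closely matches the paper's: necessity of (i)--(ii) first, then build the point from its itinerary and check uniqueness. The necessity arguments and the eventually-constant case are essentially the same as the paper's (your parabolic argument for ruling out the tail $\overline{a(a+1)}$ is what the cited \cite{castle_et_al11} result amounts to).

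The genuine divergence is at the shrinking step, and here your justification is incomplete. You argue: the $h_n$ are distinct, hence $h_n(D)$ leave every compactum, and ``cofiniteness of $\varGamma^\pm_B$ then forces the Euclidean diameter of $\overline{h_n(D)}$ to $0$''. The last implication is true but is not a one-liner. The convergence dynamics of divergent M\"obius sequences give, along subsequences, $h_n\to p$ uniformly on compact subsets of $\overline\Dcal\setminus\{q\}$ for some $p,q\in S^1$; since $\overline D\cap S^1=\{\theta_0,\dots,\theta_{m-1}\}$, trouble arises exactly when $q$ is an ideal vertex, and ruling that case out needs an extra argument. Your parenthetical remark that ``condition~(ii) does the work'' here is also misplaced: the nested cylinders shrink to a point for \emph{every} $\abf$ with $a_t\neq a_{t+1}$ for all $t$, including $\overline{a(a+1)}$ (they shrink to $\theta_{a+1}$). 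Condition~(ii) enters only later, to show that the shrinking point has the correct itinerary --- exactly where you do use it in your surjectivity paragraph.

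The paper bypasses all of this with a two-line arithmetic argument that exploits the unimodularity hypothesis: by Lemma~\ref{ref14}(iii),(v) the intervals $\sm{A}_{a_0}\cdots\sm{A}_{a_{t-1}}[\overline I_{a_t}]$ are unimodular, so the third coordinates of the corresponding de~Sitter points $\bm{A}_{a_0}\cdots\bm{A}_{a_{t-1}}\bm{w}_{a_t}$ are \emph{integers}; strict nesting makes this sequence of integers strictly increasing, hence tending to infinity, hence the arclengths tend to~$0$. Your lattice route is more general (it would work for non-arithmetic ideal polygons), but as written it leaves the main analytic burden unpaid; the paper's route is shorter and specific to the setting of the lemma.
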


\begin{remark}
Since we are considering half-open intervals, each $\sigma$ has precisely one $B$-symbolic sequence; thus $\varphi$ is well defined. This differs slightly form other treatments of Gauss-like maps (see, e.g.,~\cite[\S2.1]{kessebohmerstratmann07} or~\cite[\S1.2.1]{smillieulcigrai10}),
in which rational points have two symbolic sequences. Note that $\varphi$ is not continuous; indeed, if it were it would have compact image, which is not the case (e.g., all
sequences of the form $(01)^n\overline0$ lie in the image, but the resulting sequence of sequences does not have a limit point in $\varphi[S^1]$).
\end{remark}

\begin{proof}[Proof of Lemma~\ref{ref24}]
Each $\sm{A}_a$ is an involution, and exchanges $\overline{I}_a$ with $\bigcup_{b\not=a}\overline{I}_b$, the bar denoting topological closure.
However, in this proof we carefully distinguish $B$ (which maps bijectively $\overline{I}_a$ to $\bigcup_{b\not=b}\overline{I}_b$) from $\sm{A}_a$ (which is
one of the branches of $B\m$, the one that maps bijectively
$\bigcup_{b\not=a}\overline{I}_b$ to $\overline{I}_a$).
We do so in order to prepare the ground for the proof of Theorem~\ref{ref34}, where the argument we are going to provide will be adapted to another $(m-1)$-to-$1$ covering map of $S^1$.

Let $\abf=\varphi(\sigma)$. If $a_t=a_{t+1}=a$, then $B^t(\sigma)\in I_a\cap B\m[I_a]=\set{\theta_a}$. Since $\theta_a$ is a $B$-fixed point, we have $a_{t+h}=a$ for every
$h\ge0$. Moreover, if $t\ge1$ and $a_{t-1}\not=a$, then
we have
$\theta_a=B^t(\sigma)\in B[I_{a_{t-1}}]$, which implies
$a_{t-1}\not=a-1$, because $\theta_a\notin B[I_{a-1}]$.
Hence
$\abf$ cannot have tail $(a-1)\overline{a}$. The fact that $\abf$ cannot have tail $\overline{a(a+1)}$ is proved in~\cite[Theorem~2.1]{castle_et_al11}.
We conclude that every $B$-symbolic sequence must satisfy (i) and (ii).

Conversely, we fix $\abf$ satisfying (i) and (ii) and show that there exists a unique point having $\abf$ as $B$-symbolic sequence. We need a preliminary remark: suppose we know that $\sigma$ is the unique point having $B$-symbolic sequence $\bbf$. Then, by direct inspection, we have:
\begin{itemize}
\item[(a)] if $\sigma$ is in the interior of $I_{b_0}$ and $b\not=b_0$, then $\sm{A}_b*\sigma$ is in the interior of $I_b$ and is the unique point having $B$-symbolic sequence $b\bbf$;
\item[(b)] the same conclusion holds if $\sigma=\theta_{b_0}$, provided that $b\notin\set{b_0,b_0-1}$.
\end{itemize}

\noindent\emph{Case 1.}
The sequence $\abf$ has tail $\overline{a}$, say from time $t$ on.
If $t=0$, then there exists a unique point having $B$-symbolic sequence $\overline{a}$, namely $\theta_a$. If $t>0$, then the previous remark and induction show that 
$\sm{A}_{a_0}\cdots\sm{A}_{a_{t-1}}*\theta_a$ is
the only point having $B$-symbolic sequence $\abf$.

\noindent\emph{Case 2.}
The sequence $\abf$ does not have tail $\overline{a}$, for any $a$. Since $a_t\not=a_{t+1}$ for every~$t$, we have strict inclusions $\overline{I}_{a_t}\supset\sm{A}_{a_t}[\overline{I}_{a_{t+1}}]$ for every $t$,
and hence a strictly decreasing sequence of nested intervals
\begin{equation}\label{eq8}
\overline{I}_{a_0}\supset\sm{A}_{a_0}[\overline{I}_{a_1}]\supset
\sm{A}_{a_0}\sm{A}_{a_1}[\overline{I}_{a_2}]\supset\cdots.
\end{equation}
We claim that this sequence shrinks to a singleton.
Indeed, each set in~\eqref{eq8} is a unimodular interval, strictly containing the following one. By Lemma~\ref{ref14}(v) the third coordinates of the corresponding points $\bm{w}_{a_0},\bm{A}_{a_0}\bm{w}_{a_1},\bm{A}_{a_0}\bm{A}_{a_1}\bm{w}_{a_2},\ldots$
on the de Sitter space form a strictly increasing sequence.
Since we are dealing with unimodular intervals, these third coordinates are integer numbers, and a strictly increasing sequence of integers must go to infinity. Therefore the arclengths of the intervals go to $0$, and the intersection of the sequence in~\eqref{eq8} contains at least one point ---by compactness--- but no more than one.

Let $\sigma$ be the shrinking point of~\eqref{eq8} and let $\varphi(\sigma)=\bbf$; we prove $\abf=\bbf$ by induction (note that, clearly, no point other than $\sigma$ may have $B$-symbolic sequence $\abf$).
We have $\sigma\in\overline{I}_{a_0}\cap I_{b_0}$; if $a_0$ were different from $b_0$, then necessarily $\sigma=\theta_{b_0}$ and $b_0=a_0+1$.
Therefore, for every $t\ge1$ we have $\sigma=B^t(\sigma)\in B^t[\sm{A}_{a_0}\cdots\sm{A}_{a_{t-1}}\bigl[\overline{I}_{a_t}]\bigr]=\overline{I}_{a_t}$, and thus $\sigma$ belongs to $\overline{I}_{a_t}$. This implies $\abf=\overline{a_0(a_0+1)}$, which contradicts~(ii); hence $a_0=b_0$.
For the inductive step, assume $a_r=b_r$ for $0\le r<t$. Then $B^t(\sigma)$ has $B$-symbolic sequence $b_tb_{t+1}\ldots$ and is the unique shrinking point of the chain
\[
\overline{I}_{a_t}\supset\sm{A}_{a_t}[\overline{I}_{a_{t+1}}]\supset
\sm{A}_{a_t}\sm{A}_{a_{t+1}}[\overline{I}_{a_{t+2}}]\supset\cdots.
\]
Applying the base step above to $B^t(\sigma)$ we get $a_t=b_t$.
\end{proof}

\section{Natural extension and invariant measures}\label{ref5}

If $\varphi(\sigma)$ has constant tail~$\overline{a}$ for some $a\in\set{0,\ldots,m-1}$, i.e., $B^h(\sigma)=\theta_a$ for some $h$, we say that $\sigma$ is \newword{$B$-terminating}. 
If $\varphi(\sigma)$ has periodic tail $\overline{a_h\cdots a_{h+p-1}}$ with minimal preperiod $h$ and period $p\ge2$, we say that $\sigma$ is \newword{$B$-periodic} or \newword{$B$-preperiodic}, according whether $h$ is $0$ or greater than $0$.

We will push the identification of the de Sitter space with $(S^1\times S^1)\setminus(\diag)$ a bit further by using the symbol $\Scal$ for both; this is unambiguous since writing $\bm{w}\in\Scal$ or $(\sigma,\rho)\in\Scal$ clearly distinguishes the two uses. With this understanding, we denote by $\Scal_B$ the set of all pairs $(\sigma,\rho)$ such that:
\begin{itemize}
\item[(i)] both $\sigma$ and $\rho$ are $B$-nonterminating;
\item[(ii)] $\sigma$ and $\rho$ belong to different intervals.
\end{itemize}

For the map $B$ of Example~\ref{ref22}, the orbit in Figure~\ref{fig4} is dense in $\Scal_B$.

\begin{theorem}
The\label{ref26} following facts hold.
\begin{itemize}
\item[(i)] $\widetilde{B}\restriction\Scal_B$ is a bijection on $\Scal_B$.
\item[(ii)] If $(\sigma,\rho)\in\Scal$ is such that both $\sigma$ and $\rho$ are $B$-nonterminating, then $\widetilde{B}^t(\sigma,\rho)\in\Scal_B$ for some $t\ge0$.
\item[(iii)] Let $\tilde\mu$ be the $\PSU^\pm_{1,1}\Cbb$-invariant measure on $(S^1\times S^1)\setminus(\diag)$ given by Theorem~\ref{ref13}. Then $(\Scal_B,\tilde\mu,\widetilde B)$ is a measure-preserving system, and so is its factor $(S^1,\mu,B)$, where $\mu=\pi_*\tilde\mu$ is the pushforward measure induced by the projection $\pi(\sigma,\rho)=\sigma$.
\item[(iv)] The invertible system $(\Scal_B,\tilde{\mu},\widetilde{B})$ is the natural extension of $(S^1,\mu,B)$.
\end{itemize}
\end{theorem}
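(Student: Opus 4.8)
The plan is to prove the four assertions in order: (i) and (ii) are combinatorial statements about the half-open intervals $I_a$ and the involutions $\sm A_a=\sm R_{\bm w_a}$, after which (iii) and (iv) follow almost formally. For (i) I would start from the observation that $\sm A_a$ is an involution fixing $\theta_a,\theta_{a+1}$ and carrying $\bigcup_{b\ne a}\overline I_b$ homeomorphically onto $\overline I_a$, interior to interior. If $(\sigma,\rho)\in\Scal_B$ with $\sigma\in I_a$, then, $\rho$ being $B$-nonterminating and hence not a vertex, $\rho$ lies in the interior of some $I_c$ with $c\ne a$; so $\sm A_a*\sigma$ lies in the interior of some $I_b$, $b\ne a$, and $\sm A_a*\rho$ in the interior of $I_a$, and (by the symbolic bookkeeping of the next paragraph) both stay $B$-nonterminating, so $\widetilde B$ maps $\Scal_B$ into itself. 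Injectivity is immediate, since the second coordinate of $\widetilde B(\sigma,\rho)$ lies in $I_a$, the index of the first coordinate, so two pairs with a common image share the active index $a$ and $\sm A_a$ is injective. For surjectivity, given $(\sigma',\rho')\in\Scal_B$ with $\rho'\in I_c$, the pair $(\sm A_c*\sigma',\sm A_c*\rho')$ lies in $\Scal_B$ and $\widetilde B$ (with active index $c$, because $\sm A_c*\sigma'\in I_c$) sends it back to $(\sigma',\rho')$; thus $\widetilde B\restriction\Scal_B$ is a bijection, and $\widetilde B^{-1}$ is again piecewise an $\sm A_c$, the active index now read off the \emph{second} coordinate.

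For (ii) I would write $\sigma_t,\rho_t$ for the coordinates of $\widetilde B^t(\sigma,\rho)$, so $\sigma_t=B^t(\sigma)$ and the active index at step $t$ is $a_t(\sigma)$. The point is that $\varphi(\rho_{t+1})$ always has the same tail as $\varphi(\rho_t)$: if $\rho_t\in I_{a_t(\sigma)}$ then $\rho_{t+1}=\sm A_{a_t(\sigma)}*\rho_t=B(\rho_t)$ and $\varphi(\rho_{t+1})$ is the shift of $\varphi(\rho_t)$; otherwise $\rho_{t+1}=\sm A_{a_t(\sigma)}*\rho_t$ lies in $I_{a_t(\sigma)}$ (or equals $\rho_t$), so $B(\rho_{t+1})=\rho_t$ and $\varphi(\rho_{t+1})=a_t(\sigma)\,\varphi(\rho_t)$. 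Hence every $\rho_t$ (and every $\sigma_t$) shares the tail of $\varphi(\rho)$ (of $\varphi(\sigma)$) and stays $B$-nonterminating. Now if $\widetilde B^t(\sigma,\rho)\notin\Scal_B$ for all $t$, then, both coordinates being $B$-nonterminating throughout, necessarily $\rho_t\in I_{a_t(\sigma)}$ for every $t$, whence $\rho_{t+1}=B(\rho_t)$ and $\rho_t=B^t(\rho)\in I_{a_t(\sigma)}$ for all $t$; this forces $\varphi(\rho)=\varphi(\sigma)$, contradicting the injectivity of $\varphi$ in Lemma~\ref{ref24}. This symbolic bookkeeping — that $B$-nonterminating is preserved along $\widetilde B$-orbits in both directions — is the one step that really needs care; everything else in (i)--(ii) is then routine.

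For (iii), on each piece $(I_a\times S^1)\cap\Scal_B$ the map $\widetilde B$ agrees with the diagonal $\PSU^\pm_{1,1}\Cbb$-action of $\sm A_a$, and these pieces exhaust $\Scal_B$ up to $\tilde\mu$-null sets, since by Theorem~\ref{ref13} the measure $\tilde\mu$ is locally absolutely continuous with respect to Lebesgue$\,\times\,$Lebesgue while the fibres over the vertices are Lebesgue-null. Each $\sm A_a$ preserves $\tilde\mu$, hence so does $\widetilde B$; running the same computation with the inverse branches — again $\sm A_c$'s, by (i) — shows $\widetilde B^{-1}$ does too, so $(\Scal_B,\tilde\mu,\widetilde B)$ is measure preserving. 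Since $\pi\circ\widetilde B=B\circ\pi$ by construction and $\mu=\pi_*\tilde\mu$, we get $\mu(B^{-1}E)=\tilde\mu(\widetilde B^{-1}\pi^{-1}E)=\tilde\mu(\pi^{-1}E)=\mu(E)$, so the factor $(S^1,\mu,B)$ is measure preserving as well.

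For (iv) I would verify the defining property of the natural extension: $\widetilde B\restriction\Scal_B$ is invertible by (i), $\pi$ is a measure-preserving factor by (iii), and it remains to see that $\bigvee_{n\ge0}\widetilde B^{\,n}(\pi^{-1}\Bcal_{S^1})$ is, modulo $\tilde\mu$-null sets, the whole Borel $\sigma$-algebra of $\Scal_B$. For this I would make the symbolic picture explicit: on $\Scal_B$ the map sending $(\sigma,\rho)$ to the two-sided sequence $\bigl(\ldots,a_1(\rho),a_0(\rho)\mid a_0(\sigma),a_1(\sigma),\ldots\bigr)$ is injective (by Lemma~\ref{ref24}) and Borel, and it intertwines $\widetilde B$ with the two-sided shift, because applying $\widetilde B$ deletes $a_0(\sigma)$ from the right-hand half and — since $\sigma,\rho$ are in different intervals — prepends it to the left-hand half (here one uses $\sm A_a*\rho\in I_a$ and $\varphi(\sm A_a*\rho)=a\,\varphi(\rho)$). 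Under this conjugacy $\pi$ is the projection onto the right-hand half, and $\widetilde B^{\,n}(\pi^{-1}\Bcal_{S^1})$ is the $\sigma$-algebra generated by $y\mapsto\varphi\bigl(\pi(\widetilde B^{-n}y)\bigr)$, which records $\varphi(\sigma)$ together with the first $n$ symbols of $\varphi(\rho)$; as $n\to\infty$ these separate points, so their join is the full Borel $\sigma$-algebra (the coding map being, by the Lusin--Souslin theorem, a Borel isomorphism onto its image) — and this is precisely where the injectivity of $\varphi$ does the real work. Hence $(\Scal_B,\tilde\mu,\widetilde B)$ is the natural extension of $(S^1,\mu,B)$.
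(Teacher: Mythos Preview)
Your proof is correct and follows essentially the same route as the paper's. A few remarks on the minor differences. For~(i) the paper packages your injectivity/surjectivity argument into a single observation: writing $f(\sigma,\rho)=(\rho,\sigma)$, one has $f\circ\widetilde B\circ f=\widetilde B^{-1}$ on $\Scal_B$, which is exactly your statement that the inverse branch is read off the \emph{second} coordinate. For~(ii) the paper is terser: since $\varphi$ is injective there is a first $t$ with $a_t(\sigma)\ne a_t(\rho)$, and up to that time $\widetilde B$ acts as $B\times B$, so $\widetilde B^t(\sigma,\rho)=(B^t\sigma,B^t\rho)\in\Scal_B$; your careful bookkeeping about tails is not needed at this stage, though it does no harm. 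For~(iv) the paper uses Rohlin's point-separation criterion directly (if $(\sigma,\rho)\ne(\sigma',\rho')$ then $\pi\widetilde B^{-t}(\sigma,\rho)\ne\pi\widetilde B^{-t}(\sigma',\rho')$ for some $t$), which avoids the appeal to Lusin--Souslin; your $\sigma$-algebra argument is equivalent but a bit heavier than necessary.
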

\begin{proof}
\noindent (i) The fact that $\widetilde{B}$ maps $\Scal_B$ into itself is clear. Writing $f$ for the involution $(\sigma,\rho)\mapsto(\rho,\sigma)$ of $\Scal_B$, it is also clear that $f\circ\widetilde{B}\circ f=\widetilde{B}\m$ on $\Scal_B$. In terms of symbolic sequences, all of this just amounts to
$\widetilde{B}:(a_0a_1\ldots,b_0b_1\ldots)\mapsto(a_1\ldots,a_0b_0b_1\ldots)$
and
$f\circ\widetilde{B}\circ f:(a_0a_1\ldots,b_0b_1\ldots)\mapsto(b_0a_0a_1\ldots,b_1\ldots)$.

\noindent (ii) Let $\sigma\not=\rho$ be both
$B$-nonterminating. By Lemma~\ref{ref24} there exists $t\ge0$ such that $B^t(\sigma)$ and $B^t(\rho)$ belong to different intervals. By the definitions of $\widetilde{B}$ and of $\Scal_B$, we have $\widetilde{B}^t(\sigma,\rho)\in\Scal_B$.

\noindent (iii) Any measurable $M\subseteq\Scal_B$ is the disjoint union $M=\bigcupdot\set{M_a:a\in\set{0,\ldots,m-1}}$, where $M_a=\set{(\sigma,\rho)\in M:\rho\in I_a}$. Thus ${\widetilde B}\m M=\bigcupdot_a{\widetilde B}\m M_a=\bigcupdot_a\sm{A}_a[M_a]$
and, as $\tilde\mu\bigl(\sm{A}_a[M_a]\bigr)=\tilde\mu(M_a)$, we have $\tilde\mu({\widetilde B}\m M)=\tilde\mu(M)$.

\noindent (iv) The set $\set{\sigma\in S^1:\text{$\sigma$ is $B$-terminating}}$ is clearly $B$-invariant and has $\mu$-measure~$0$; modulo this nullset and its $\pi$-counterimage, we have the commuting square
\[
\begin{tikzcd}[column sep=1.5em]
(\Scal_B,\tilde\mu) \ar[r,"\widetilde{B}"] \ar[d,"\pi"'] & 
(\Scal_B,\tilde\mu) \ar[d,"\pi"] \\
(S^1,\mu) \ar[r,"B"] & (S^1,\mu)
\end{tikzcd}
\]
By the very definition of the natural extension~\cite[p.~22]{rohlin61}, the metric system $(\Scal_B,\tilde\mu,\widetilde{B})$ is the natural extension of its factor $(S^1,\mu,B)$ if the supremum of the family of measurable partitions
\[
\set{\widetilde{B}^t(\text{fibers of $\pi$}): t\ge0}
\]
is ---modulo nullsets--- the partition of $\Scal_B$ in singletons. This condition amounts to the request that if $(\sigma,\rho)\not=(\sigma',\rho')$, then there exists $t\ge0$ such that $\pi\bigl(\widetilde{B}^{-t}(\sigma,\rho)\bigr)\not=\pi\bigl(\widetilde{B}^{-t}(\sigma',\rho')\bigr)$. This request is clearly satisfied: if $\sigma\not=\sigma'$ we take $t=0$, while if $\sigma=\sigma'$ we take $t=h+1$, there $h$ is the least nonnegative integer such that $B^t(\rho)$ and $B^t(\rho')$ lie in different intervals.
\end{proof}

As usual in the context of Gauss-like maps, once a model of the natural extension has been determined the computation of the (unique) absolutely continuous $B$-invariant measure is easy; we state the result for the $\arg$-conjugates of $\widetilde B$ and $B$.

\begin{theorem}
Let\label{ref32} $X=\set{(\arg\sigma,\arg\rho):(\sigma,\rho)\in\Scal_B}\subset\ooi^2$ and write ---abusing language--- $\widetilde B$ and $B$ for $\arg\circ\widetilde B\circ\arg\m$ and $\arg\circ B\circ\arg\m$, respectively.
For $a=0,\ldots,m-1$, let $x_a=\arg\theta_a$, and let $h_a:\ooi\to\Rbb\p$ be the function defined by
\[
h_a(x)=\frac{\pi}{\tan(\pi(x-x_a))}-\frac{\pi}{\tan(\pi(x-x_{a+1}))}
\]
on $(x_a,x_{a+1})$, and having value $0$ elsewhere.
Then the following facts hold.
\begin{itemize}
\item[(i)] The unique (up to constants) $\widetilde B$-invariant measure on $X$ absolutely continuous with respect to the Lebesgue measure is $\ud\tilde\mu=\pi^2\bigl(\sin(\pi(x-y))\bigr)^{-2}\ud x\ud y$.
\item[(ii)] The unique (up to constants) $B$-invariant measure on $\ooi$ absolutely continuous with respect to the Lebesgue measure is $\ud\mu=\bigl(\sum_ah_a\bigr)\ud x$.
\item[(iii)] Both systems $(X,\tilde\mu,\widetilde B)$, $([0,1),\mu,B)$ are ergodic and conservative.
\end{itemize}
\end{theorem}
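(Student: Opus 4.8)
The plan is to prove existence of the two invariant densities in (i) and (ii) by direct computation, and to obtain ergodicity, conservativity, and the uniqueness statements in (iii) by passing, via the natural extension of Theorem~\ref{ref26}(iv), to the geodesic flow on a finite-volume hyperbolic orbifold. For (i) I would transport the $\PSU^\pm_{1,1}\Cbb$-invariant form of Theorem~\ref{ref13}, written in the $(\omega,\alpha)$-coordinates of~(S3) as $(\omega-\alpha)^{-2}\ud\omega\ud\alpha$, into the $\arg$-coordinates $(x,y)$. By the boundary version of~\eqref{eq14} the point $\sigma=\exp(2\pi ix)$ corresponds to $\omega=C\m*\sigma=(\sigma+i)/(i\sigma+1)=\cos(2\pi x)/(1-\sin(2\pi x))$, and likewise $\alpha\leftrightarrow y$; a short computation gives $\ud\omega=2\pi(1-\sin 2\pi x)\m\ud x$ and $(\omega-\alpha)^2=4\sin^2(\pi(x-y))/\bigl((1-\sin 2\pi x)(1-\sin 2\pi y)\bigr)$, whence the form becomes $\pi^2\sin^{-2}(\pi(x-y))\ud x\ud y$. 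Being $\PSU^\pm_{1,1}\Cbb$-invariant it is in particular invariant under each involution $\sm{A}_a$, hence under $\widetilde B$, which establishes the existence half of~(i).

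For (ii) I would push this measure forward along $\pi(x,y)=x$. For $x\in(x_a,x_{a+1})$ the fibre $\{y:(x,y)\in X\}$ equals, up to a Lebesgue-nullset, the complementary arc $[x_{a+1},x_a+1]$, so
\[
\frac{\ud(\pi_*\tilde\mu)}{\ud x}(x)=\int_{x_{a+1}}^{x_a+1}\frac{\pi^2\,\ud y}{\sin^2(\pi(x-y))}=\Bigl[\pi\cot(\pi(x-y))\Bigr]_{y=x_{a+1}}^{y=x_a+1}=h_a(x),
\]
the last equality using the $\pi$-periodicity of $\cot$; as exactly one index contributes on each piece, the density is $\sum_a h_a$. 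Since $B$ is piecewise projective it is Lebesgue-nonsingular, so $\mu$ is an a.c.\ invariant measure for $B$; its uniqueness, and that of $\tilde\mu$ in~(i), will follow from~(iii), because a conservative ergodic Lebesgue-nonsingular transformation admits at most one $\sigma$-finite a.c.\ invariant measure up to a scalar.

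It remains to prove~(iii). Ergodicity and conservativity are shared by a system and its natural extension, and $\arg$ is a measure isomorphism, so by Theorem~\ref{ref26}(iv) it suffices to show that $(\Scal_B,\tilde\mu,\widetilde B)$ is ergodic and conservative. I would identify this system with a cross-section system of the geodesic flow on $\Dcal/\varGamma^\pm_B$, which is a finite-volume hyperbolic orbifold because $\varGamma^\pm_B$ has finite covolume. Indeed, since $D$ is convex, a chord of $\Dcal$ joining a point of $I_b$ to a point of $I_a$ with $a\ne b$ must cross the interior of $D$, and conversely a geodesic meeting $D$ enters and leaves through two distinct sides; hence, up to a Lebesgue-nullset, $\Scal_B$ is exactly the set of oriented geodesics meeting the interior of $D$, i.e.\ the cross-section of inward geodesics based on $\partial D$ in the unit tangent bundle of $\Dcal/\varGamma^\pm_B$. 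Under this identification $\widetilde B$ --- reflect in the exit side $l_a$, i.e.\ unfold the adjacent tile back onto $D$ --- is the first-return map of the geodesic flow, and $\tilde\mu$ is, up to a constant, the induced Poincar\'e flux measure. The geodesic flow on a finite-volume hyperbolic orbifold is conservative, its Liouville measure being finite, and ergodic by Hopf; a first-return map to a cross-section inherits both, so $\widetilde B$, hence $B$, hence their $\arg$-conjugates, are conservative and ergodic.

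The main obstacle is making the cross-section identification precise: one must check that $\Scal_B$ coincides mod null with the geodesics meeting $D$ and that passing to $\Dcal/\varGamma^\pm_B$ creates no extra identifications, that $\widetilde B$ is genuinely the first-return map, and that the Poincar\'e flux measure is a scalar multiple of $\tilde\mu$ (a Santal\'o-type computation), while noting that the orbifold's cusps do not obstruct Hopf's theorem. An alternative that bypasses the flow is to work with the Markov map $B$ directly: its only failure of expansion occurs at the indifferent fixed points $\theta_a$, so inducing $B$ on the complement of small horoball neighbourhoods of the $\theta_a$ yields a uniformly expanding Markov map with bounded distortion (the R\'enyi condition, automatic for piecewise-projective maps), hence an exact --- in particular ergodic --- map which preserves a finite measure and is therefore conservative; transporting these properties back to $B$ then requires the routine local analysis of the parabolic germs showing that Lebesgue-a.e.\ orbit eventually leaves, and later returns to, each such neighbourhood, so that the inducing domain is a sweep-out set and the induced map is defined almost everywhere.
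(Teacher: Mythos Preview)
Your treatment of (i) and (ii) is correct and coincides with the paper's: the paper also transports $(\omega-\alpha)^{-2}\ud\omega\ud\alpha$ through the boundary change of variables (it splits the computation into two steps via the intermediate substitution $x'=\pi(x-y)$, $y'=\pi(x+y)$, but the content is identical to your one-step calculation), and then obtains the density of $\mu$ by integrating $\pi^2\sin^{-2}(\pi(x-y))$ over the fibre, exactly as you do.

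For (iii) you take a genuinely different route. The paper works entirely on the one-dimensional side: it checks that $B^2$ satisfies Thaler's conditions~\cite[p.~69(1)--(4)]{thaler83}, obtains ergodicity and conservativity of $B^2$ and hence of $B$, and then transfers these to $\widetilde B$ via the natural-extension equivalence~\cite[Theorem~3.1.7]{aaronson97}. Your argument instead goes through the two-dimensional side, reading $\widetilde B$ as the first-return map of the geodesic flow on the finite-volume quotient and invoking Hopf. Both are valid; the paper's approach is shorter and self-contained (no flow, just distortion estimates for a piecewise-M\"obius interval map with neutral fixed points), while yours is more conceptual and makes direct use of the billiard picture the paper has already set up. The obstacles you flag are real but standard; one point worth noting is that $\varGamma^\pm_B$ contains orientation-reversing elements, so the quotient is non-orientable---it is cleanest to run Hopf for the index-$2$ fuchsian subgroup $\varGamma_B$ and then descend. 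Your proposed alternative via inducing away from the indifferent fixed points is, in effect, exactly what Thaler's framework encapsulates, so that path leads back to the paper's proof.
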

\begin{proof}
\noindent(i) This is just a change of variables, easily performed in two steps. Let $F_1,F_2:\Rbb^2\to\Rbb^2$ be defined by
\begin{align*}
F_1(x,y)&=\bigl(\pi(x-y),\pi(x+y)\bigr)=(x',y'),\\
F_2(x',y')&=\biggl(\frac{\cos(x'+y')}{1-\sin(x'+y')},
\frac{\cos(-x'+y')}{1-\sin(-x'+y')}\biggr)=(\omega,\alpha).
\end{align*}
Then $F_2\circ F_1$ is a bijection from $\ooi^2\setminus\set{\diag}$
to $(\PP^1\Rbb\times\PP^1\Rbb)\setminus\set{\diag}$; indeed, it amounts to the componentwise application of $C\m\circ\arg\m$, with $C$ the Cayley matrix. This implies that the pushforward of the infinite invariant measure $(\omega-\alpha)^{-2}\ud\omega\ud\alpha$ of Theorem~\ref{ref13} via $\arg\circ\, C$ is $(F_2\circ F_1)^*\bigl((\omega-\alpha)^{-2}\ud\omega\ud\alpha\bigr)$. One now computes
\begin{align*}
F_2^*\biggl(\frac{1}{(\omega-\alpha)^2}\ud\omega\ud\alpha\biggr)&=
\frac{1/2}{\sin^2(x')}\ud x'\ud y',\\
F_1^*\biggl(\frac{1/2}{\sin^2(x')}\ud x'\ud y'\biggr)&=
\frac{\pi^2}{\sin^2(\pi(x-y))}\ud x\ud y.
\end{align*}

\noindent(ii) Let $x\in(x_a,x_{a+1})$. Then $h_a(x)$ is the integral
\[
\int_0^{x_a}\frac{\pi^2\ud y}{\sin^2(\pi(x-y))}+
\int_{x_{a+1}}^1\frac{\pi^2\ud y}{\sin^2(\pi(x-y))}
\]
of the invariant density in (i) along the fiber $\set{x}\times\bigl([0,x_a]\cup[x_{a+1},1]\bigr)$.

\noindent(iii) It is easy to check that $B^2$ satisfies Thaler's conditions~\cite[p.~69(1)--(4)]{thaler83}. This implies that $B^2$ is ergodic and conservative; therefore so is $B$ and its natural extension $\widetilde B$~\cite[Theorem~3.1.7]{aaronson97}.
\end{proof}

We draw in Figure~\ref{fig5} the invariant density $\sum_ah_a$ for the map $B$ of Example~\ref{ref22}.
We note that, in case $m=3$, a direct geometric proof of Theorem~\ref{ref32}(ii) was given by Ko\l{}odziej
and Misiurewicz, using Ptolemy's theorem on quadrilaterals inscribed in a circle~\cite{kolodziej81}, \cite{misiurewicz81}.
\begin{figure}
\includegraphics[width=6.2cm]{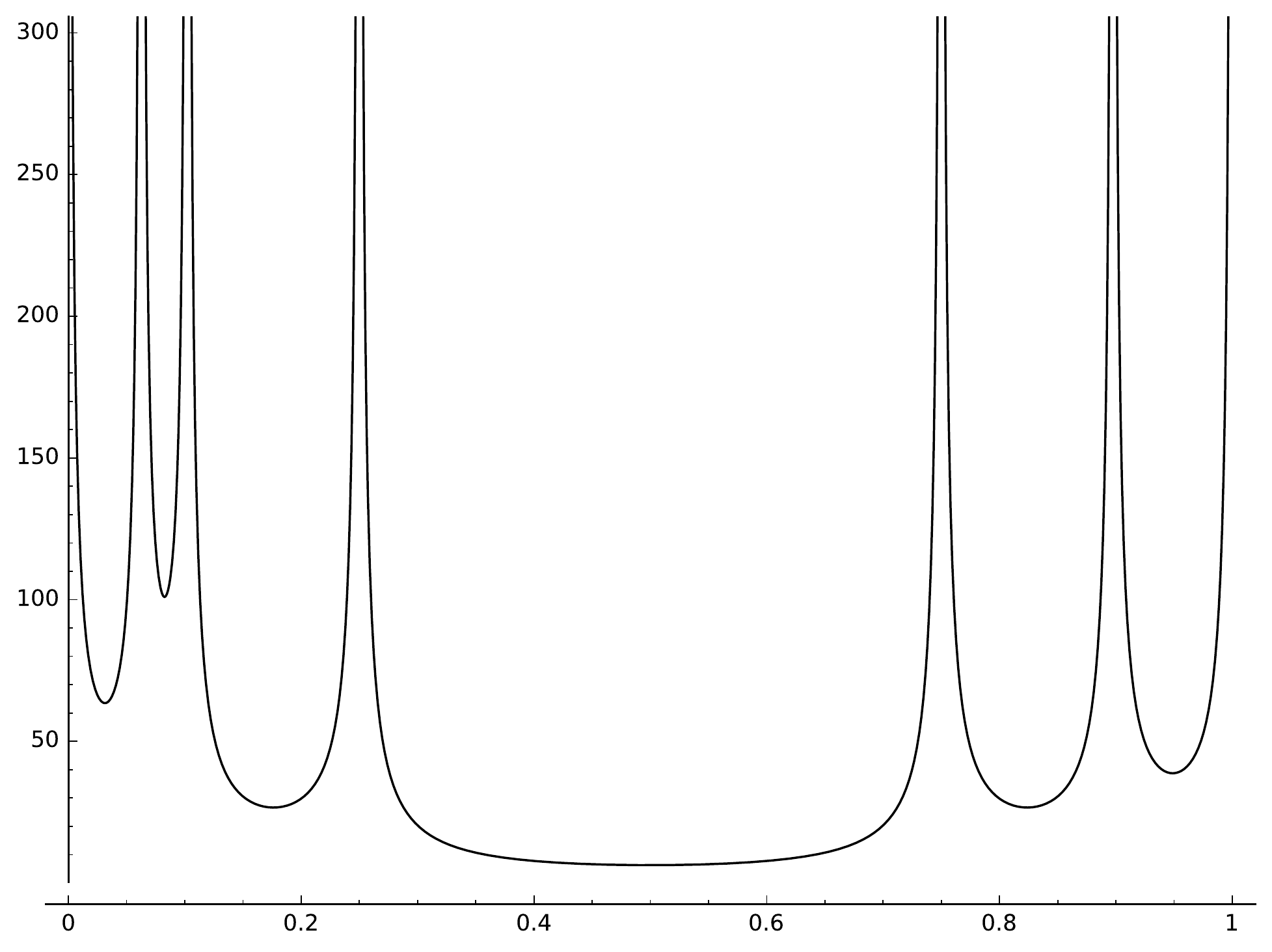}
\caption{The invariant density for the map of Example~\ref{ref22}}
\label{fig5}
\end{figure}

\section{The Lagrange theorem}\label{ref6}

Our next result is a version of Serret's theorem (two real numbers have the same tail in their continued fraction expansion precisely when they are $\PSL^\pm_2\Zbb$-equivalent~\cite[\S10.11]{hardywri85}, \cite{panti18}) in modern language.

\begin{theorem}
The\label{ref27} map $B$ and the group $\varGamma^\pm_B$ are orbit equivalent.
More precisely, given $\sigma,\sigma'\in S^1$, there exists $\sm{A}\in\varGamma^\pm_B$ such that $\sigma'=\sm{A}*\sigma$ if and only if there exist $h,k\ge0$ such that $B^h(\sigma)=B^k(\sigma')$. In particular, if $\sigma$ belongs to $\Qbb(i)$ then it is $B$-terminating, its orbit landing in the unique vertex of $D$ which is $\varGamma_B$-equivalent to $\sigma$.
\end{theorem}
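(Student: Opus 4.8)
\emph{Overall strategy.} The plan is to establish the displayed ``if and only if'' --- which is precisely what ``orbit equivalent'' means here --- and then read off the assertion about $\Qbb(i)$ from it, in exact analogy with the way Serret's theorem is used in the classical theory of continued fractions.

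\emph{From a common $B$-iterate to the group (the easy direction).} If $B^t(\sigma)\in I_{a_t}$ for every $t$, then Definition~\ref{ref23} gives $B^h(\sigma)=\sm{A}_{a_{h-1}}\cdots\sm{A}_{a_0}*\sigma$, and likewise $B^k(\sigma')=\sm{A}_{a'_{k-1}}\cdots\sm{A}_{a'_0}*\sigma'$. Equating the two and using that each $\sm{A}_a$ is an involution yields $\sigma'=\bigl(\sm{A}_{a'_0}\cdots\sm{A}_{a'_{k-1}}\sm{A}_{a_{h-1}}\cdots\sm{A}_{a_0}\bigr)*\sigma$, with the bracketed element lying in $\varGamma^\pm_B=\angles{\sm{A}_0,\dots,\sm{A}_{m-1}}$.

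\emph{From the group to a common $B$-iterate.} Write $\sigma\sim\sigma'$ if $B^h(\sigma)=B^k(\sigma')$ for some $h,k\ge 0$. I would first check that $\sim$ is an equivalence relation; only transitivity needs an argument, and it follows by applying powers of $B$ to two coincidences $B^h(\sigma)=B^k(\sigma')$, $B^l(\sigma')=B^m(\sigma'')$, obtaining $B^{h+l}(\sigma)=B^{k+l}(\sigma')=B^{k+m}(\sigma'')$. The core claim is that $\sigma\sim\sm{A}_a*\sigma$ for every $\sigma\in S^1$ and every generator $\sm{A}_a$. Here I distinguish three cases using that $I_a=[\theta_a,\theta_{a+1})$ is half-open. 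If $\sigma\in I_a$ then $B(\sigma)=\sm{A}_a*\sigma$ outright. If $\sigma=\theta_{a+1}$ then $\sm{A}_a*\sigma=\sigma$, since $\sm{A}_a=\sm{R}_{\bm{w}_a}$ fixes both ideal endpoints $\theta_a,\theta_{a+1}$ of its mirror $l_a$. Finally, if $\sigma\notin\overline{I}_a$, then $\sigma\notin\{\theta_a,\theta_{a+1}\}$ and, because $\sm{A}_a$ exchanges $\overline{I}_a$ with $\bigcup_{b\ne a}\overline{I}_b$ while fixing $\theta_a,\theta_{a+1}$, the point $\sm{A}_a*\sigma$ lies in $\overline{I}_a\setminus\{\theta_a,\theta_{a+1}\}\subset I_a$, whence $B(\sm{A}_a*\sigma)=\sm{A}_a\sm{A}_a*\sigma=\sigma$. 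In all cases $\sigma\sim\sm{A}_a*\sigma$, and since $\varGamma^\pm_B$ is generated by the $\sm{A}_a$ while $\sim$ is symmetric and transitive, $\sigma\sim\sm{A}*\sigma$ for every $\sm{A}\in\varGamma^\pm_B$. This is the remaining implication.

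\emph{The statement on $\Qbb(i)$, and the main obstacle.} The points of $S^1\cap\Qbb(i)$ are exactly the rational points of $S^1$. By Theorem~\ref{ref7}, $\PSU^\pm_{1,1}\Zbb[i]$ is the $C$-conjugate of $\angles{F,P,J}=\Delta^\pm(2,\infty,\infty)$, of index $3$ in $\PSL^\pm_2\Zbb$; since the cusp set of $\PSL^\pm_2\Zbb$ is $\PP^1\Qbb$, a finite-index subgroup has the same cusps, and the boundary identifications of \S\ref{ref2} carry $\PP^1\Qbb$ onto the rational points of $S^1$, the cusps of $\PSU^\pm_{1,1}\Zbb[i]$ are precisely $S^1\cap\Qbb(i)$. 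As $\varGamma^\pm_B$ has finite index in $\PSU^\pm_{1,1}\Zbb[i]$ it has the same cusps, and as $D$ is a finite-area fundamental polygon for $\varGamma^\pm_B$, each such cusp is $\varGamma^\pm_B$-equivalent to some ideal vertex $\theta_a$. Thus for $\sigma\in S^1\cap\Qbb(i)$ we have $\sigma=\sm{A}*\theta_a$ with $\sm{A}\in\varGamma^\pm_B$; the biconditional gives $B^h(\sigma)=B^k(\theta_a)$ for some $h,k$, and since $\theta_a$ is $B$-fixed this reads $B^h(\sigma)=\theta_a$, i.e.\ $\sigma$ is $B$-terminating, its orbit landing at $\theta_a$. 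Unfolding $B$ backwards writes $\sigma=\sm{M}*\theta_a$ for a word $\sm{M}$ in the $\sm{A}_b$, and replacing $\sm{M}$ by $\sm{M}\sm{A}_a$ if necessary (still sending $\theta_a$ to $\sigma$, as $\sm{A}_a$ fixes $\theta_a$) we may take $\sm{M}\in\varGamma_B$; moreover the landing vertex is unique, since $B^h(\sigma)=\theta_a$ and $B^{h'}(\sigma)=\theta_b$ with $h\le h'$ force $\theta_b=B^{h'-h}(\theta_a)=\theta_a$, and the $\theta_a$ are pairwise $\varGamma_B$-inequivalent because each side of $D$ is glued to itself by a reflection fixing its two endpoints. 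I expect the only real friction to be bookkeeping: the endpoint analysis in the core step above (keeping straight which of $\theta_a,\theta_{a+1}$ lies in $I_a$, and using that both are $\sm{A}_a$-fixed), and the verification that the cusps of $\varGamma^\pm_B$ are exactly the rational points of $S^1$ with the $\theta_a$ exhausting the cusp classes --- neither deep, but both to be carried out with care.
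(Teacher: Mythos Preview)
Your proof is correct and takes a genuinely different route from the paper's. For the hard direction of the biconditional, the paper first establishes---via an explicit descent argument on the third coordinate of pythagorean triples, after a preliminary conjugation to make all $(\bm{w}_a)_3\ge0$---that rational points are $B$-terminating; it then splits the orbit-equivalence proof into rational and irrational cases, the latter handled by manipulating the unique factorization of $\sm{A}$ against the $B$-symbolic sequence of~$\sigma$. Your equivalence-relation argument, reducing to the generators and checking the three positional cases for $\sigma$ relative to $\overline{I}_a$, is uniform and avoids any rational/irrational split; it is arguably the more elegant proof of the biconditional.

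For the $\Qbb(i)$ statement the trade-off reverses. The paper's descent is self-contained and effective: one sees the hypotenuse strictly decrease under~$B$, so the orbit must land at a vertex in boundedly many steps. Your argument instead invokes the general theory of finite-covolume Fuchsian groups (finite-index subgroups share cusp sets; every cusp is equivalent to an ideal vertex of a finite-sided fundamental polygon). This is valid and shorter, but relies on background the paper otherwise keeps at arm's length, and gives no effective bound. One cosmetic point: in your transitivity check you use $m$ as an index, clashing with the number of vertices---rename it.
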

\begin{proof}
We begin proving the last assertion, for which the $\partial\Kcal$ setting is expedient. Let then $\bm{s}$ be a rational point, and let 
$(\bm{w}_0)_3,\ldots,(\bm{w}_{m-1})_3\in\Zbb$ be the third coordinates of the points $\bm{w}_0,\ldots \bm{w}_{m-1}$ of Definition~\ref{ref28}. We need a preliminary step.

\paragraph{\emph{Claim}} By conjugating $B$ by an appropriate element of $\SOO^\uparrow_{2,1}\Zbb$, we may assume that $(\bm{w}_0)_3,\ldots,(\bm{w}_{m-1})_3$ are all greater than $0$, with at most one exception that may equal $0$.

\paragraph{\emph{Proof of Claim}} By Lemma~\ref{ref14}(v), the greater is the arclength of $I_a$, the smaller is $(\bm{w}_a)_3$, with $(\bm{w}_a)_3=0$ corresponding to arclength~$\pi$.
This implies that no more than one of the above third coordinates may be negative or $0$. Say that $(\bm{w}_a)_3<0$. If $I_a$ is even, then by Theorem~\ref{ref15} we may conjugate $B$ by the matrix in $\SOO^\uparrow_{2,1}\Zbb$ that sends $\bm{w}_a$ to $(0,1,0)$, and we are through. If $I_a$ is odd, than we conjugate by the matrix that sends $\bm{w}_a$ to $(1,1,1)$; the image of $I_a$ will then have arclength $\pi/2$.
One of the new third coordinates may now have value $0$, but none may have value $-1$ or less, since value $-1$ already corresponds to an arclength of $3\pi/2$, and the sum of the arclengths would exceed $2\pi$.

Having proved our claim we perform, if needed, this preliminary conjugation, which does not affect the validity of our statement; renaming indices, we assume $(\bm{w}_0)_3\ge0$ and $(\bm{w}_1)_3,\ldots,(\bm{w}_{m-1})_3>0$. If $\bm{s}$ is one of $\bm{t}_0,\ldots,\bm{t}_{m-1}$, we are through. Otherwise, $\bm{s}$ is in the interior of precisely one interval, say $I_a$; let $\bm{s}'=B(\bm{s})$. Then, lifting $\bm{s}$ and $\bm{s}'$ to their canonical representatives (i.e., to pythagorean triples),
we have the identity in $\Zbb^3$
\begin{equation}\label{eq9}
\bm{s}'=\bm{A}_a\bm{s}=\bm{s}-2\frac{\angles{\bm{w}_a,\bm{s}}}{\angles{\bm{w}_a,\bm{w}_a}}\bm{w}_a.
\end{equation}
Now, $\angles{\bm{w}_a,\bm{w}_a}=1$ since $\bm{w}_a\in\Scal$, and $\angles{\bm{w}_a,\bm{s}}>0$ since $\bm{s}$ is in the interior of~$I_a$.
This implies that the third coordinate of $\bm{s}'$ is strictly less than the third coordinate of $\bm{s}$, unless $a=0$ and $(\bm{w}_0)_3=0$, in which case we have equality.
But the third coordinates of $\bm{s}$ and $\bm{s}'$ are positive integers, and the exceptional case of equality is always preceded and followed by nonexceptional cases. Hence the process must stop, and this may happen only when the $B$-orbit of $\bm{s}$ lands in one of the interval endpoints $\bm{t}_0,\ldots,\bm{t}_{m-1}$.

For the first assertion, the ``if'' implication is clear. Assume $\sigma'=\sm{A}*\sigma$. If one of $\sigma,\sigma'$ is in $\Qbb(i)$ then so is the other, and by
the first part of the proof both $\sigma$ and $\sigma'$ land in one of $\theta_0,\ldots,\theta_{m-1}$. Since the vertices of $D$ are $\varGamma^\pm_B$-inequivalent, they must land in the same $\theta_a$.
Let then $\sigma,\sigma'\notin\Qbb(i)$ and $\varphi(\sigma)=\bm{a}$. As noted in~\S\ref{ref3}, $\sm{A}$ factors uniquely as $\sm{A}=\sm{A}_{b_0}\ldots\sm{A}_{b_{r-1}}$,
for certain $b_0,\ldots,b_{r-1}\in\set{0,\ldots,m-1}$.
Let $0\le h\le r$ be minimum such that $a_h\not=b_{r-1-h}$. Then
\begin{align*}
\sigma'&=\sm{A}_{b_0}\cdots\sm{A}_{b_{r-1}}*\sigma\\
&=\sm{A}_{b_0}\cdots\sm{A}_{b_{r-1}}
\sm{A}_{a_0}\cdots\sm{A}_{a_{r-1}}*B^r(\sigma)\\
&=\sm{A}_{b_0}\cdots\sm{A}_{b_{r-1-h}}
\sm{A}_{a_h}\cdots\sm{A}_{a_{r-1}}*B^r(\sigma)\\
&=\sm{A}_{b_0}\cdots\sm{A}_{b_{r-1-h}}*B^h(\sigma).
\end{align*}
By (a) in the proof of Lemma~\ref{ref24}, $\varphi(\sigma')=b_0\ldots b_{r-1-h}a_ha_{h+1}\ldots$, and $B^{r-h}(\sigma')=B^h(\sigma)$.
\end{proof}

The bijection between $\partial\Dcal\cap\Qbb(i)$ and rational points in $\partial\Kcal$ extends to higher degrees.

\begin{lemma}
Let\label{ref25} $\bm{s}=[s_1,s_2,s_3]\in\partial\Kcal$ correspond to $\sigma=(s_1+s_2i)/s_3\in\partial\Dcal$ as usual, and let $\omega=C\m*\sigma=(\mu\circ\upsilon)(\bm{s})\in\PP^1\Rbb$. Then $\Qbb(\bm{s})=\Qbb(\omega)$ and 
$[\Qbb(\omega):\Qbb]=[\Qbb(i)(\sigma):\Qbb(i)]$. If $\Qbb(\omega)/\Qbb$ is Galois totally real, then the Galois groups $\Gal(\Qbb(\omega)/\Qbb)$ and $\Gal(\Qbb(i)(\sigma)/\Qbb(i))$ are naturally isomorphic.
In particular, assume that $\sigma$ is quadratic over $\Qbb(i)$ and let $\sigma'$ be its Galois conjugate. Then $\sigma'\in\partial\Dcal$ and $\omega'=C\m*\sigma'$ is the Galois conjugate of $\omega$ with respect to the quadratic extension $\Qbb(\omega)/\Qbb$.
\end{lemma}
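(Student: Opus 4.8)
The plan is to treat the four assertions in turn, reducing each to elementary algebra on the conic $\partial\Kcal=\set{s_1^2+s_2^2=s_3^2}$ or to standard facts about compositums with $\Qbb(i)$.

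First I would make the maps $\bm s\mapsto\sigma$ and $\bm s\mapsto\omega$ explicit. By the discussion following~\eqref{eq14}, on $\partial\Kcal$ one has $\sigma=(s_1+s_2i)/s_3$ and $\omega=(\mu\circ\upsilon)([s_1,s_2,s_3])=s_1/(s_3-s_2)$; using $s_1^2=(s_3-s_2)(s_3+s_2)$ this also equals $(s_3+s_2)/s_1$, and solving back recovers $\bm s=[2\omega,\omega^2-1,\omega^2+1]$, exactly as in Lemma~\ref{ref14}(ii). Since these formulas are rational in both directions, $\Qbb(\bm s)=\Qbb(s_1/s_3,s_2/s_3)=\Qbb(\omega)$, which is the first claim; the point $\sigma=i$ (where $\omega=\infty$) is the trivial exceptional case, to be disposed of separately.

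For the degree equality, $\sigma$ and $\omega$ are exchanged by the M\"obius transformation $C$, and both $C$ and $C\m$ have matrices with entries in $\Zbb[i]$; hence $\Qbb(i)(\sigma)=\Qbb(i)(\omega)=\Qbb(\omega,i)$. As $\omega\in\PP^1\Rbb$ we have $i\notin\Qbb(\omega)$, so $[\Qbb(\omega,i):\Qbb(\omega)]=2=[\Qbb(i):\Qbb]$; comparing the towers $\Qbb\subseteq\Qbb(\omega)\subseteq\Qbb(\omega,i)$ and $\Qbb\subseteq\Qbb(i)\subseteq\Qbb(\omega,i)$ yields $[\Qbb(\omega):\Qbb]=[\Qbb(\omega,i):\Qbb(i)]=[\Qbb(i)(\sigma):\Qbb(i)]$. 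If moreover $\Qbb(\omega)/\Qbb$ is Galois, then the compositum $\Qbb(\omega,i)/\Qbb(i)$ is Galois, and the restriction map $\Gal(\Qbb(\omega,i)/\Qbb(i))\to\Gal(\Qbb(\omega)/\Qbb)$ is injective (an automorphism fixing both $\Qbb(i)$ and $\Qbb(\omega)$ fixes $\Qbb(\omega,i)$), hence bijective by the order count just made; this is the asserted natural isomorphism. I would note in passing that the hypothesis ``totally real'' is automatic once $\Qbb(\omega)/\Qbb$ is normal, since $\omega$ is real.

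Finally, if $\sigma$ is quadratic over $\Qbb(i)$ then $\omega$ is quadratic over $\Qbb$ by the degree equality, so $\Qbb(\omega)/\Qbb$ is Galois and the previous paragraph identifies the two order-$2$ Galois groups. The nontrivial element $g$ of $\Gal(\Qbb(\omega,i)/\Qbb(i))$ fixes $i$ and sends $\omega\mapsto\omega'$; since $\sigma=(\omega-i)/(-i\omega+1)$, it sends $\sigma$ to $\sigma'=g(\sigma)=(\omega'-i)/(-i\omega'+1)=C*\omega'$, so $\omega'=C\m*\sigma'$. Since $\omega'\in\Qbb(\omega)\subset\Rbb$ and $\abs{t-i}=\abs{1-it}$ for every real $t$, we get $\abs{C*\omega'}=1$, i.e.\ $\sigma'\in\partial\Dcal$, which finishes the proof. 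I do not expect a genuine obstacle: the only points requiring care are the bookkeeping of the birational maps among the models, the exceptional value $\sigma=i$, and the minor observation that normality already forces total reality.
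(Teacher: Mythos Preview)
Your proof is correct and follows essentially the same route as the paper's: both establish $\Qbb(\bm{s})=\Qbb(\omega)$ via the rationality of the stereographic projection, then identify $\Qbb(i)(\sigma)$ with $\Qbb(\omega,i)$ and conclude by standard tower/compositum arguments. The only cosmetic difference is that the paper obtains $\Qbb(i,\sigma)=\Qbb(i,s_1/s_3,s_2/s_3)$ from the identity $s_1/s_3=(\sigma+\sigma\m)/2$, whereas you obtain the equivalent $\Qbb(i)(\sigma)=\Qbb(i)(\omega)$ by noting that the Cayley matrix has entries in $\Zbb[i]$; your observation that normality already forces total reality is a nice bonus.
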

\begin{proof}
Since the stereographic projection through $[0,1,1]$ is a rational map with rational coefficients,
the identity $\Qbb(\bm{s})=\Qbb(\omega)$ holds (with the convention that $\Qbb(\infty)=\Qbb$). 
All statements follow from elementary Galois theory, as soon as one realizes that $\Qbb(i,\sigma)=\Qbb(i,s_1/s_3,s_2/s_3)$. In this identity the left-to-right containment is obvious, and the other one follows from $s_1/s_3=(\sigma+\sigma\m)/2$.
\end{proof}

The question of the validity of Lagrange's theorem (preperiodic points correspond to quadratic irrationals) for the Romik map
is left open in~\cite[\S5.1]{romik08}. It can be settled in the affirmative by the result in~\cite{panti09}; see also~\cite{chakim19a} for this issue, and~\cite{chakim19b} for diophantine approximation aspects of the Romik map. Here we provide a different proof, valid not only for the Romik map but for all maps based on unimodular partitions. Note that our proof covers
not only Lagrange's, but Galois's theorem~\cite[Chapter~III]{rockettszusz92}: periodic points correspond to reduced irrationals.

\begin{theorem}
The\label{ref30} point $\sigma\in S^1$ is $B$-preperiodic if and only if it is quadratic over $\Qbb(i)$. If this is the case and $a_0\ldots a_{h-1}\overline{a_h\ldots a_{h+p-1}}$ is the $B$-symbolic sequence of $\sigma$ (with $p$ the minimal period and $h$ the minimal preperiod, so that $a_{h-1}\not=a_{h+p-1}$), then the $B$-symbolic sequence of the Galois conjugate $\sigma'$ is $a_0\ldots a_{h-1}\overline{a_{h+p-1}\ldots a_h}$. In particular, the preperiodic $\sigma$ is periodic iff so is $\sigma'$ iff $(\sigma,\sigma')\in\Scal_B$.
\end{theorem}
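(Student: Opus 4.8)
The plan is to establish the three assertions separately: the elementary implication ``$B$-preperiodic $\Rightarrow$ quadratic'', then the substantial converse, and finally the shape of $\varphi(\sigma')$ together with the characterization of pure periodicity.

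For the easy half, suppose $\sigma$ is $B$-preperiodic, so $\tau:=B^h(\sigma)$ is $B$-periodic with symbolic sequence $\overline{a_h\ldots a_{h+p-1}}$. Minimality of $p$ makes the word $\sm A_{a_h}\cdots\sm A_{a_{h+p-1}}$ cyclically reduced of length $\ge2$ in the free product $\varGamma^\pm_B$, hence of infinite order; it fixes $\tau$ and is not elliptic (its fixed point is on $S^1$) nor parabolic (a parabolic fixes a cusp, i.e.\ a rational point, but $\tau$ is not $B$-terminating, hence not rational, by Theorem~\ref{ref27}). Thus it is hyperbolic, and the fixed-point equation of an element of $\PSU^\pm_{1,1}\Zbb[i]$ is a $\Zbb[i]$-quadratic, irreducible over $\Qbb(i)$ since $\tau\notin\Qbb(i)$; so $\tau$, and with it $\sigma=\sm A_{a_0}\cdots\sm A_{a_{h-1}}*\tau$, is quadratic over $\Qbb(i)$.

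For the converse, the key is to produce a nontrivial element of the \emph{specific} group $\varGamma^\pm_B$ fixing $\sigma$. I would pass to the real line via Lemma~\ref{ref25}: $\omega=C\m*\sigma$ is quadratic over $\Qbb$, so by the classical Lagrange theorem (equivalently, Pell's equation for the order attached to $\omega$) the stabilizer of $\omega$ in $\PSL_2\Zbb$ contains a hyperbolic element; replacing it by a suitable power I may take it in the finite-index subgroup $\angles{F,P,J}^+$, and then $\sm A_0:=CgC\m\in C\angles{F,P,J}C\m=\PSU^\pm_{1,1}\Zbb[i]$ is hyperbolic and fixes $\sigma=C*\omega$. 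A further suitable power $\sm A:=\sm A_0^k$ lies in the finite-index subgroup $\varGamma^\pm_B$, is still hyperbolic, and fixes $\sigma$. Now I apply the argument in the proof of Theorem~\ref{ref27} (case $\sigma,\sigma'\notin\Qbb(i)$) with $\sigma'$ replaced by $\sigma$: writing $\sm A=\sm A_{b_0}\cdots\sm A_{b_{r-1}}$ as a reduced word and letting $h$ be as there, one obtains $B^h(\sigma)=B^{r-h}(\sigma)$. The point --- and the one place a little care is needed --- is that $r-h\ne h$: equality would force $b_0\ldots b_{r-1}$ to be a palindrome of even length $2h$, hence its two central letters to coincide, contradicting reducedness. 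So $B^{\min(h,r-h)}(\sigma)=B^{\max(h,r-h)}(\sigma)$ with distinct exponents, whence $\sigma$ is $B$-preperiodic.

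Finally, for the conjugate's symbolic sequence and the periodicity dichotomy, I would first treat $h=0$: running the nested-interval chain of Lemma~\ref{ref24} (Case~2) through one full period shows $\sigma$ is the attracting fixed point of $\sm P:=\sm A_{a_0}\cdots\sm A_{a_{p-1}}$, so its conjugate $\sigma'$ --- the other root of the fixed-point quadratic of $\sm P$ --- is the repelling fixed point; since the $\sm A_a$ are involutions, $\sm P\m=\sm A_{a_{p-1}}\cdots\sm A_{a_0}$ has attracting fixed point $\sigma'$, and the same nested-interval argument applied to the cyclically reduced word $a_{p-1}\ldots a_0$ gives $\varphi(\sigma')=\overline{a_{p-1}\ldots a_0}$ (minimality of the period transfers by reversal). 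Here $\sigma\in I_{a_0}$ and $\sigma'\in I_{a_{p-1}}$ with $a_0\ne a_{p-1}$, so $(\sigma,\sigma')\in\Scal_B$. For $h\ge1$, write $\sigma=\sm N*\tau$ with $\tau=B^h(\sigma)$ and $\sm N=\sm A_{a_0}\cdots\sm A_{a_{h-1}}\in\PSU^\pm_{1,1}\Zbb[i]$; since $\sm N$ commutes with $\Qbb(i)$-conjugation on $S^1$, it carries $\tau'$ to $\sigma'$, and iterating observations (a)--(b) in the proof of Lemma~\ref{ref24} --- valid because $a_{h-1}\ne a_{h+p-1}$ (minimal preperiod) and $a_i\ne a_{i+1}$ along a valid symbolic sequence --- yields $\varphi(\sigma')=a_0\ldots a_{h-1}\overline{a_{h+p-1}\ldots a_h}$, with preperiod and period unchanged by the same distinctness conditions. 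In particular, when $h\ge1$ both $\sigma$ and $\sigma'$ lie in $I_{a_0}$, so $(\sigma,\sigma')\notin\Scal_B$; combining the two cases gives ``$\sigma$ periodic $\iff(\sigma,\sigma')\in\Scal_B$'', and ``$\sigma$ periodic $\iff\sigma'$ periodic'' follows from the symmetry $\sigma\leftrightarrow\sigma'$. The main obstacle throughout is the converse implication: exhibiting the hyperbolic element of $\varGamma^\pm_B$ (hence the detour through $\Qbb$, classical Pell, and finite-index bookkeeping) and then extracting genuine preperiodicity from Theorem~\ref{ref27} via the palindrome observation.
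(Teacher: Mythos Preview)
Your argument is correct, and uses the same basic ingredients as the paper (Lemma~\ref{ref25}, units in the order, finite index of $\varGamma^\pm_B$ in $\PSU^\pm_{1,1}\Zbb[i]$, unique factorization in the free product), but organizes them differently.

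The paper does \emph{not} separate the converse from the description of $\varphi(\sigma')$. Instead it first iterates $\widetilde B$ on the pair $(\sigma,\sigma')$ until, at some time $h$, the two points land in distinct intervals (Theorem~\ref{ref26}(ii)); only then does it build the hyperbolic element $\sm H^l\in\varGamma^\pm_B$ whose axis is the geodesic $g_h$ from $\sigma'_h$ to $\sigma_h$. Because $g_h$ actually crosses the table $D$, the reduced word $\sm H^l=\sm A_{b_0}\cdots\sm A_{b_{q-1}}$ is forced to be \emph{cyclically} reduced ($b_0\ne b_{q-1}$), and one reads off directly that $\varphi(\sigma_h)=\overline{b_0\cdots b_{q-1}}$ and $\varphi(\sigma'_h)=\overline{b_{q-1}\cdots b_0}$; preperiodicity of $\sigma$ and the shape of $\varphi(\sigma')$ fall out together. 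Your route, by contrast, builds the hyperbolic element fixing $\sigma$ \emph{before} any separation of $\sigma$ from $\sigma'$, and then extracts preperiodicity by feeding $\sigma'=\sigma$ into the Serret argument of Theorem~\ref{ref27}; the price is the extra palindrome observation (to exclude $r=2h$), and a separate treatment of $\varphi(\sigma')$ via Galois-equivariance of $\sm N$ and the $h=0$ case. What you gain is modularity: each step is short and reuses previously proved statements verbatim. What the paper gains is that cyclic reducedness of the word ---the geometric heart of the matter--- is obtained for free from the fact that $g_h$ cuts $D$, with no combinatorial case analysis.
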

\begin{proof}
Let $\sigma$ be $B$-preperiodic. Clearly, for every $\sm{A}\in\PSU^\pm_{1,1}\Zbb[i]$, we have $\Qbb(i)(\sm{A}*\sigma)=\Qbb(i)(\sigma)$; we can then assume that
$\sigma$ is $B$-periodic, with $B$-symbolic sequence $\overline{a_0a_1\ldots a_{p-1}}$.
Let $\sm{B}=\sm{A}_{a_0}\sm{A}_{a_1}\cdots\sm{A}_{a_{p-1}}$. By looking at the decreasing sequence~\eqref{eq8} in the proof of Lemma~\ref{ref24}, we obtain
\[
\bigcap_{n\ge0}\sm{B}^n[\overline{I}_{a_0}]=\set{\sigma}.
\]
Since $\sm{B}*\sigma$ is also in the above intersection, it equals $\sigma$, and this yields a quadratic polynomial with coefficients in $\Qbb(i)$ and having $\sigma$ as root. This polynomial is not the zero polynomial, as $\sm{B}$ is not the identity matrix, and is irreducible over $\Qbb(i)$ because $\sigma$ is $B$-nonterminating and Theorem~\ref{ref27} applies.

Conversely, let $\sigma\in S^1$ be quadratic over $\Qbb(i)$. By Lemma~\ref{ref25} the conjugate $\sigma'$ is in $S^1$ as well.
For $t\ge0$, let $\widetilde{B}^t(\sigma,\sigma')=(\sigma_t,\sigma'_t)$, and let $g_t$ be the oriented geodesic of origin $\sigma'_t$ and endpoint $\sigma_t$. By Theorem~\ref{ref26} there exists $h\ge0$ such that, for $0\le t<h$, the points $\sigma_t$ and $\sigma'_t$ belong to the same interval (so that $g_t$ does not cut the billiard table $D$), while $g_t$ cuts $D$ for every $t\ge h$. In particular, the $B$-symbolic sequences of $\sigma$ and $\sigma'$ agree up to time $h-1$ included, and disagree at time $h$.
Let $\omega=C\m*\sigma_h$, $\omega'=C\m*\sigma'_h$; since $\sigma_t$ and $\sigma'_t$ are still conjugate in $\Qbb(i)(\sigma)/\Qbb(i)$, by Lemma~\ref{ref25} $\omega$ and $\omega'$ are conjugate in $\Qbb(\omega)/\Qbb$.
Let $O=\set{\xi\in\Qbb(\omega):\xi(\Zbb\omega+\Zbb)\subseteq \Zbb\omega+\Zbb}$ be the coefficient ring of the module
$\Zbb\omega+\Zbb$~\cite[Chapter 2 \S2.2]{borevichshafarevich66}.
Then $O$ is an order in $\Qbb(\omega)$ with fundamental unit $\varepsilon>1$, and thus the matrix
\begin{equation}\label{eq10}
H=
\begin{pmatrix}
\omega & \omega' \\
1 & 1
\end{pmatrix}
\begin{pmatrix}
\varepsilon & \\
 & \varepsilon'
\end{pmatrix}
\begin{pmatrix}
\omega & \omega' \\
1 & 1
\end{pmatrix}\m
\end{equation}
(where $\varepsilon'$ is the conjugate of $\varepsilon$) is in $\PSL^\pm_2\Zbb$.

Now, $\angles{F,P,J}=C\m\bigl(\PSU^\pm_{1,1}\Zbb[i]\bigr)C$ is an index-$3$ subgroup of $\PSL^\pm_2\Zbb$ (see the end of the proof of Theorem~\ref{ref7}), and $\varGamma^\pm_B$ is a finite-index subgroup of $\PSU^\pm_{1,1}\Zbb[i]$ (see~\S\ref{ref3}). Hence, replacing $H$ with an appropriate power, we obtain a matrix $\sm{H}^l=CH^lC\m\in\varGamma^\pm_B$ which induces on $\Dcal$ either a hyperbolic translation of axis $g_h$ (if $\det\sm{H}^l=1$), or a glide reflection, again of axis $g_h$ (if $\det\sm{H}^l\not=1$). As noted in~\S\ref{ref3}, $\sm{H}^l$ can be uniquely written as
$\sm{H}^l=\sm{A}_{b_0}\cdots\sm{A}_{b_{q-1}}$ for certain $b_0,\ldots,b_{q-1}\in\set{0,\ldots,m-1}$.
We claim that $\overline{b_0\cdots b_{q-1}}$ and $\overline{b_{q-1}\cdots b_0}$ are the $B$-symbolic sequences of $\sigma_h$ and $\sigma'_h$, respectively ($q$ might be a proper multiple of the minimal period $p$); this will conclude the proof of Theorem~\ref{ref30}.

We must have $b_0\not=b_{q-1}$. Indeed, if not, then $\sm{H}^l$ would factor as
\[
\sm{H}^l=(\sm{A}_{b_0}\cdots\sm{A}_{b_{t-1}})
(\sm{A}_{b_t}\cdots\sm{A}_{b_{t+k-1}})
(\sm{A}_{b_{t-1}}\cdots\sm{A}_{b_0}),
\]
for some $k\ge2$, with $t=(q-k)/2$ and $b_t\not=b_{t+k-1}$.
Hence $g_h$ would be the $(\sm{A}_{b_0}\cdots\sm{A}_{b_{t-1}})$-image of the geodesic stabilized by $(\sm{A}_{b_t}\cdots\sm{A}_{b_{t+k-1}})$, which has endpoints in the two distinct intervals $I_{b_t}$ and $I_{b_{t+k-1}}$. Since $b_t$ and $b_{t+k-1}$ are different from $b_{t-1}$, the endpoints of $g_h$ would both lie in $I_{b_0}$, which is impossible since $g_h$ cuts $D$; therefore $b_0\not=b_{q-1}$.

The sequence $\overline{b_0\cdots b_{q-1}}$ satisfies (i) in Lemma~\ref{ref24} (because $b_0\not=b_{q-1}$), as well as (ii) (because otherwise $\sm{H}^l$ would be a power of some $\sm{A}_a\sm{A}_{a+1}$ and thus would be parabolic, which is not possible because any power of the matrix in~\eqref{eq10} has trace of absolute value greater than $2$). Therefore, $\overline{b_0\cdots b_{q-1}}$ is the $B$-symbolic sequence of a unique point of $S^1$, and this point is necessarily $\sigma_h$, because $\sigma_h$ is the ideal endpoint of $g_h$, and thus the 
shrinking point of
\[
\bigcap_{n\ge0}(\sm{A}_{b_0}\cdots\sm{A}_{b_{q-1}})^n[\overline{I}_{b_0}].
\]
The same argument, applied to $\sm{H}\m=
\sm{A}_{b_{q-1}}\cdots\sm{A}_{b_0}$, shows that $\sigma'_h$ has $B$-symbolic sequence
$\overline{b_{q-1}\cdots b_0}$.
\end{proof}

\begin{example}
Consider\label{ref31} the unimodular partition given by the pythagorean triples
\[
\bm{t}_0=\begin{bmatrix}
1\\0\\1
\end{bmatrix},\,
\bm{t}_1=\begin{bmatrix}
3\\4\\5
\end{bmatrix},\,
\bm{t}_2=\begin{bmatrix}
0\\1\\1
\end{bmatrix},\,
\bm{t}_3=\begin{bmatrix}
-1\\0\\1
\end{bmatrix},\,
\bm{t}_4=\begin{bmatrix}
-4\\-3\\5
\end{bmatrix},\,
\bm{t}_5=\begin{bmatrix}
0\\-1\\1
\end{bmatrix};
\]
in Figure~\ref{fig6} we draw the corresponding billiard table by thick geodesics.
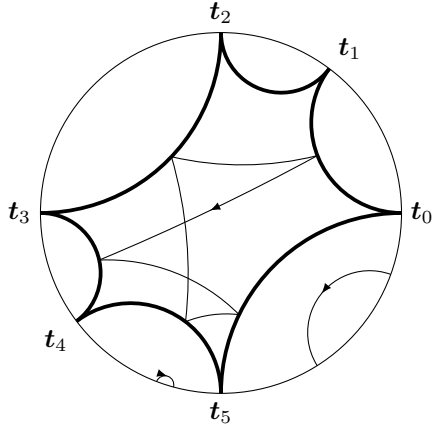
\begin{figure}
\begin{tikzpicture}[scale=2.4]
\coordinate (t0) at (1,0);
\coordinate (t1) at (3/5,4/5);
\coordinate (t2) at (0,1);
\coordinate (t3) at (-1,0);
\coordinate (t4) at (-4/5,-3/5);
\coordinate (t5) at (0,-1);
\coordinate (p0) at (0.534243810315058, 0.318145190412338);
\coordinate (p1) at (-0.274526256734081, 0.311750155952262);
\coordinate (p2) at (-0.196394953997764, -0.602729341579346);
\coordinate (p3) at (0.100265640995835, -0.563562051114521);
\coordinate (p4) at (-0.674627555227642, -0.260918671185604);
\coordinate (p5) at (0.260918671185604, 0.674627555227642);
\coordinate (m0) at (0,0.270554564457706);
\coordinate (m1) at (-0.199633876162339, -0.0517569308569029);
\coordinate (m2) at (0,-0.560217683447639);
\coordinate (m3) at (-0.181808083644401, -0.362571293244869);

\coordinate (q0) at (0.940244170526333, -0.340500954170832);
\coordinate (q1) at (0.532770440347337, -0.846259805197023);
\coordinate (q2) at (-0.263180542501494, -0.964746599915553);
\coordinate (q3) at (-0.356506066369974, -0.934293007916364);
\coordinate (q4) at (0.499129786270742, -0.550280474188051);
\coordinate (q5) at (-0.291936038432741, -0.906348076675969);

\draw (0,0) circle [radius=1cm];
\draw[line width=0.5mm] (t0) to[arc through cw=(p0)] (t1);
\draw[line width=0.5mm] (t1) to[arc through cw=(p5)] (t2);
\draw[line width=0.5mm] (t2) to[arc through cw=(p1)] (t3);
\draw[line width=0.5mm] (t3) to[arc through cw=(p4)] (t4);
\draw[line width=0.5mm] (t4) to[arc through cw=(p2)] (t5);
\draw[line width=0.5mm] (t5) to[arc through cw=(p3)] (t0);
\draw (p0) to[arc through cw=(m0)] (p1);
\draw (p1) to[arc through cw=(m1)] (p2);
\draw (p2) to[arc through cw=(m2)] (p3);
\draw (p3) to[arc through ccw=(m3)] (p4);
\draw[middlearrow={latex}] (p0) to[arc through cw=(m1)] (p4);
\draw[middlearrow={latex}] (q0) to[arc through ccw=(q4)] (q1);
\draw[middlearrow={latex}] (q3) to[arc through cw=(q5)] (q2);
\node[right] at (t0) {$\bm{t}_0$};
\node[above right] at (t1) {$\bm{t}_1$};
\node[above] at (t2) {$\bm{t}_2$};
\node[left] at (t3) {$\bm{t}_3$};
\node[below left] at (t4) {$\bm{t}_4$};
\node[below] at (t5) {$\bm{t}_5$};
\end{tikzpicture}
\caption{A periodic orbit in a billiard table}
\label{fig6}
\end{figure}

Let $q(x,y)=4091x^2+1302xy+101y^2$, which has discriminant $D=42440$. The roots
of $q(x,1)$ are
\[
\omega_0=\frac{-1302+\sqrt{D}}{2\cdot 4091}\simeq -0.13395,\quad
\alpha_0=\frac{-1302-\sqrt{D}}{2\cdot 4091}\simeq -0.18430.
\]
We work directly on the de Sitter space; by~\eqref{eq7}, $q$ corresponds to
\[
\frac{1}{\sqrt{D}}
\begin{pmatrix}
& 1 & \\
-1 & & 1 \\
1 & & 1
\end{pmatrix}
\begin{pmatrix}
4091\\
-1302\\
101
\end{pmatrix}
\in\Scal.
\]
Since we may safely multiply by a constant, and we prefer working with integer vectors, we multiply by $\sqrt{D}/2$ and define
\[
\bm{v}=
\frac{1}{2}
\begin{pmatrix}
& 1 & \\
-1 & & 1 \\
1 & & 1
\end{pmatrix}
\begin{pmatrix}
4091\\
-1302\\
101
\end{pmatrix}=
\begin{pmatrix}
-651\\
-1995\\
2096
\end{pmatrix}
\in\frac{\sqrt{D}}{2}\Scal\cap\Zbb^3.
\]

By the equivariance between (S1) and (S5) in Theorem~\ref{ref13},
the billiard map $\widetilde B$ on [any dilated copy of] $\Scal$ is piecewise defined by the following matrices in $\SOO_{2,1}\Zbb$:
\begin{align*}
\Lambda(A_0)=-\bm{A}_0&=
\begin{pmatrix}
7&4&-8\\
4&1&-4\\
8&4&-9
\end{pmatrix},
&-\bm{A}_1&=
\begin{pmatrix}
1&6&-6\\
6&17&-18\\
6&18&-19
\end{pmatrix},\\
-\bm{A}_2&=
\begin{pmatrix}
1&-2&2\\
-2&1&-2\\
-2&2&-3
\end{pmatrix},
&-\bm{A}_3&=
\begin{pmatrix}
17&6&18\\
6&1&6\\
-18&-6&-19
\end{pmatrix},\\
-\bm{A}_4&=
\begin{pmatrix}
1&4&4\\
4&7&8\\
-4&-8&-9
\end{pmatrix},
&-\bm{A}_5&=
\begin{pmatrix}
1&-2&-2\\
-2&1&2\\
2&-2&-3
\end{pmatrix}.
\end{align*}

In order to apply $\widetilde B$ we must determine the pair $(\bm{s},\bm{r})\in(S^1\times S^1)\setminus(\diag)$ associated to $\bm{v}$, and the interval $I_a$ to which $\bm{s}$ belongs. The intervals $I_0,\ldots,I_5$ correspond as in
Definition~\ref{ref28} to the points in~$\Scal$
\[
\bm{w}_0=\begin{pmatrix}
2\\1\\2
\end{pmatrix},\,
\bm{w}_1=\begin{pmatrix}
1\\3\\3
\end{pmatrix},\,
\bm{w}_2=\begin{pmatrix}
-1\\1\\1
\end{pmatrix},\,
\bm{w}_3=\begin{pmatrix}
-3\\-1\\3
\end{pmatrix},\,
\bm{w}_4=\begin{pmatrix}
-1\\-2\\2
\end{pmatrix},\,
\bm{w}_5=\begin{pmatrix}
1\\-1\\1
\end{pmatrix}.
\]
A straightforward computation along the lines of the proof of Theorem~\ref{ref13} shows that $\bm{s},\bm{r}$ are given, as a function of $\bm{v}\in(\sqrt{D}/2)\Scal$, by
\[
\bm{s}=
\begin{bmatrix}
-(v_1+\sqrt{D}/2)(v_2-v_3)\\
v_1(v_1+\sqrt{D}/2)+v_3(v_2-v_3)\\
v_1(v_1+\sqrt{D}/2)+v_2(v_2-v_3)
\end{bmatrix},\quad
\bm{r}=
\begin{bmatrix}
-(v_1-\sqrt{D}/2)(v_2-v_3)\\
v_1(v_1-\sqrt{D}/2)+v_3(v_2-v_3)\\
v_1(v_1-\sqrt{D}/2)+v_2(v_2-v_3)
\end{bmatrix},
\]
and that the $3$rd coordinates $s_3,r_3$ displayed above are always strictly positive. This implies that all values $\angles{\bm{w}_0,\bm{s}},\ldots,\angles{\bm{w}_5,\bm{s}}$ are strictly negative, with precisely one strictly positive exception.
The index $a$ of that exception is the index of the interval $I_a$ to which $\bm{s}$ belongs, and thus the index of the matrix $-\bm{A}_a$ to be applied.

In our case, $\angles{\bm{w}_4,\bm{s}}=1.64125\ldots$ and
$\angles{\bm{w}_4,\bm{r}}=1.94758\ldots$; thus both $\bm{s}=\bm{s}_0$ and $\bm{r}=\bm{r}_0$ lie in $I_4$, and 
the $\widetilde B$-image of $\bm{v}=\bm{v}_0$ is
$-\bm{A}_4\bm{v}_0=(-247, 199, -300)=\bm{v}_1$.
Repeating the computation we see that both $\bm{s}_1$ and $\bm{r}_1$ are in $I_5$, so that $\bm{v}_2=-\bm{A}_5\bm{v}_1=(-45, 93, 8)$. Now $\bm{s}_2$ and $\bm{r}_2$ belong to different intervals, namely the $3$rd and the $0$th; thus $\bm{v}_2$ belongs to $\Scal_B$ and the periodicity starts. Proceeding with the computation we obtain
\begin{multline*}
\begin{pmatrix}
-651 \\ -1995 \\ 2096
\end{pmatrix}\mapsto
\begin{pmatrix}
-247\\ 199\\ -300
\end{pmatrix}\mapsto
\begin{pmatrix}
-45\\ 93\\ 8
\end{pmatrix}\mapsto
\begin{pmatrix}
-63\\ -129\\ 100
\end{pmatrix}\mapsto\\
\begin{pmatrix}
-5\\ 197\\ -168
\end{pmatrix}\mapsto
\begin{pmatrix}
111\\ 15\\ -44
\end{pmatrix}\mapsto
\begin{pmatrix}
-7\\ -119\\ -60
\end{pmatrix}\mapsto
\begin{pmatrix}
-45\\ 93\\ 8
\end{pmatrix}.
\end{multline*}

The $B$-symbolic sequence of $\omega_0$ is thus $45\overline{35420}$, and that of $\alpha_0$ is $45\overline{02453}$. We draw in Figure~\ref{fig6} the resulting billiard trajectory, along with the two geodesics corresponding to the preperiodic points $\bm{v}_0$ and $\bm{v}_1$.
\end{example}

\section{Minkowski functions}\label{ref4}

Let $B:S^1\to S^1$ be the factor of some fixed billiard map as in Definition~\ref{ref23}. Clearly $B$ is an orientation-reversing $(m-1)$-to-$1$ covering map of $S^1$ onto itself. The same properties are shared by precisely one continuous group homomorphism $T:S^1\to S^1$, namely $T(z)=z^{-(m-1)}$. In this section we prove that there exists a self-homeomorphism $\Phi$ of $S^1$ that conjugates $B$ with~$T$. We provide an explicit expression for $\Phi$, and prove that $\Phi$ is unique up to postcomposition with the elements of the dihedral group of order $2m$.
In the final section we will show that~$\Phi$ is
purely singular with respect to the Lebesgue measure on $S^1$, and
H\"older continuous with exponent equal to $\log(m-1)$ divided by the maximal periodic mean free path in the hyperbolic billiard associated to $\widetilde B$.

\begin{example}
The\label{ref46} prototype of such homeomorphisms is the Minkowski \newword{question mark} function, which conjugates the Farey map $x\mapsto\min(x/(1-x),(1-x)/x)$ on $[0,1]$ with the tent map $x\mapsto\min(2x,-2x+2)$, see~\cite{salem43}, \cite{jordansahlsten16}, \cite{bocalinden18} and references therein. For an example in our setting, let us consider the unimodular partition determined by $1,i,-1,-i$; we have then a ``square billiard table''. For ease of visualization we look at $B$ and $T$ as maps from $\ooi$ to itself; in particular, $T(x)=-3x\pmod{1}$. We show in
Figure~\ref{fig7} (left) the superimposed graphs of $B$ and $T$, and the resulting function $\Phi$ (right).
\begin{figure}
\includegraphics[width=5.5cm]{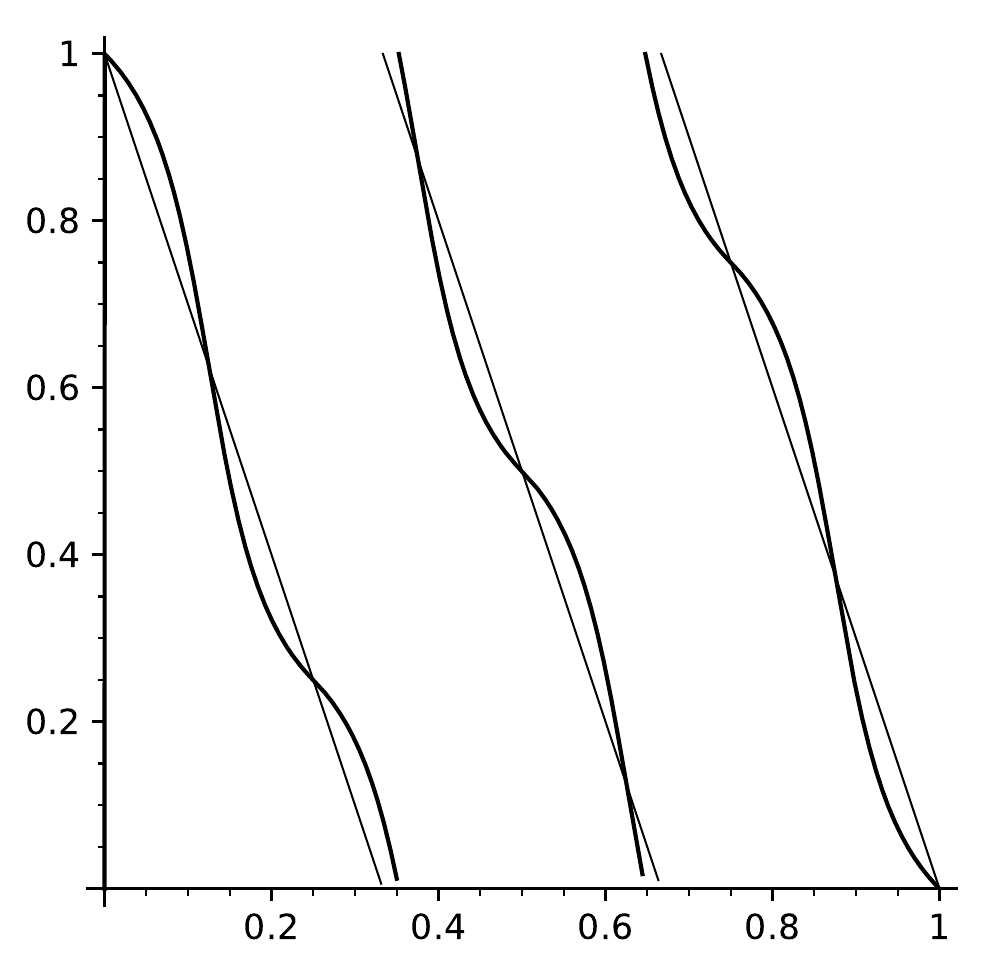}
\hspace{0.6cm}
\includegraphics[width=5.5cm]{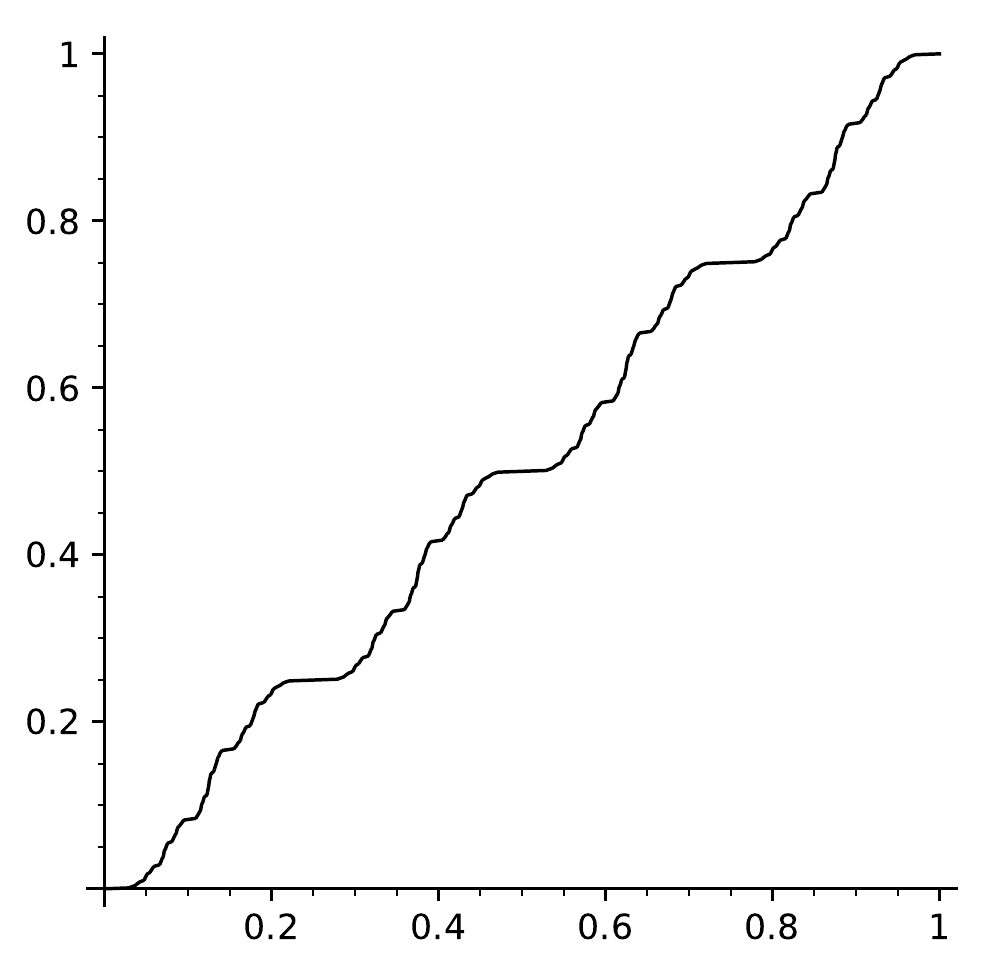}
\caption{Superimposed graphs of $B$ and $T$, and the resulting Minkowski function.}
\label{fig7}
\end{figure}
As noted in Example~\ref{ref22}, $B$ is defined via $4$ pieces, with endpoints the indifferent fixed points $0$, $1/4$, $1/2$, $3/4$, and has (apparent) discontinuities at $0$, $\arg(\sm{A}_1*1)=\arccos(-3/5)/(2\pi)=0.35241\ldots$, $\arg(\sm{A}_2*1)=1-\arg(\sm{A}_1*1)$. In this quite specific case $T$ shares the set of fixed points (which of course are now expansive) with $B$; the graph of $T$ has (apparent) discontinuities at $0$, $1/3$, $2/3$. We will return to this example at the end of the paper.
\end{example}

In order to state the next result, we recall that the torsion subgroup $S^1_\text{tor}$ of $S^1$ is the internal direct sum of the Pr\"ufer groups $S^1_\text{$p$-tor}=\set{\sigma\in S^1:\ord(\sigma)\text{ is a power of }p}$, for $p$ ranging over the primes. We let $\zeta=\exp(2\pi i/(m-1))$.

\begin{theorem}
There\label{ref34} exists a homeomorphism $\Phi:S^1\to S^1$ such that $\Phi\circ B=T\circ\Phi$. This homeomorphism is unique up to postcomposition with elements of the dihedral group $z\mapsto\zeta^hz^e$, with $h\in\set{0,\ldots,m-1}$ and $e\in\set{-1,1}$. The map $\Phi$ establishes a bijection between the set of points in $S^1$ of degree $\le2$ over $\Qbb(i)$ and $S^1_\text{tor}$, the set $S^1\cap\Qbb(i)$
corresponding to the direct sum of the subgroup $\angles{\zeta}$ generated
by $\zeta$ and the finitely many $S^1_\text{$p$-tor}$, for $p\mid m-1$.
\end{theorem}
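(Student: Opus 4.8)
The plan is to realize $\Phi$ as an isomorphism of symbolic codings, following the blueprint of Lemma~\ref{ref24} (whose proof, as observed there, was arranged so as to be reusable for another $(m-1)$-to-$1$ covering map of $S^1$). Recall the $B$-symbolic-sequence map $\varphi\colon S^1\to\set{0,\dots,m-1}^\omega$ of Definition~\ref{ref35}; by Lemma~\ref{ref24} it is injective with image the set $\mathcal A$ of all sequences satisfying conditions (i) and (ii) there. First I would run the identical construction for $T$. The fixed points of $T(z)=z^{-(m-1)}$ are exactly the $m$-th roots of unity; write $\zeta=\exp(2\pi i/m)$ for a generator (a primitive $m$-th root of unity). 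They cut $S^1$ into half-open arcs $J_0,\dots,J_{m-1}$, and $T$ maps each $\overline J_a$ bijectively and orientation-reversingly onto $\bigcup_{b\ne a}\overline J_b$ --- combinatorially the very pattern of $B$ on the $\overline I_a$. So one obtains a $T$-symbolic-sequence map $\psi\colon S^1\to\set{0,\dots,m-1}^\omega$, and the proof of Lemma~\ref{ref24} carries over, the single essential change being that the role played there by unimodularity (strictly increasing integer third coordinates forcing the nested intervals to shrink) is now played by the uniform contraction ratio $1/(m-1)$ of the inverse branches of $T$; hence $\psi$ too is injective with image exactly $\mathcal A$. I then set $\Phi=\psi\m\circ\varphi$. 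It is a bijection of $S^1$ since $\varphi$ and $\psi$ have the same image, and it satisfies $\Phi\circ B=T\circ\Phi$ since each coding intertwines its map with the one-sided shift, i.e.\ $\varphi(B\sigma)$ and $\psi(T\tau)$ are the respective tails of $\varphi(\sigma)$ and $\psi(\tau)$.

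The delicate point is that $\varphi$, $\psi$, hence $\Phi$, are \emph{not} continuous as maps of sequence spaces (the remark after Lemma~\ref{ref24}), so continuity of $\Phi\colon S^1\to S^1$ must be read off the symbolic data; this is what Lemma~\ref{ref36} accomplishes. What I would establish there is that the ternary betweenness relation on $S^1$ is recovered from $\mathcal A$-sequences by a single combinatorial rule, valid for $B$ and for $T$ alike: given three distinct points, one inspects the first time at which their current iterates fail to lie in a common arc and reads off their cyclic order from the cyclic order of the arcs then occupied, correcting for the orientation-reversals accumulated along the common prefix (recursing on any pair still together). The verification uses only the cyclic arrangement of the $m$ arcs, the fact that each maps onto the union of the other $m-1$ reversing orientation, and the shrinking established above --- all shared by $B$ and $T$. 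Granting this, $\Phi$ transports betweenness to betweenness, so it is an isomorphism of $S^1$ as a cyclically ordered set; since the order topology of the circle coincides with its usual topology, $\Phi$ is a homeomorphism (orientation-preserving with the chosen labellings, though orientation is irrelevant below).

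For uniqueness: if $\Phi_1,\Phi_2$ both conjugate $B$ to $T$, then $\Phi_1\Phi_2\m$ centralizes $T$ in $\mathrm{Homeo}(S^1)$, and conversely postcomposing a conjugacy with any homeomorphism centralizing $T$ yields a conjugacy; so I only need to compute the centralizer $Z$ of $T$. That $Z$ contains the dihedral group $\set{z\mapsto\zeta^hz^e:h\in\set{0,\dots,m-1},\,e\in\set{-1,1}}$ is a one-line check using $\zeta^m=1$. Conversely, any $g\in Z$ permutes $\mathrm{Fix}(T)$ preserving cyclic order, hence acts on it as a rotation or a reflection; after postcomposing with a suitable dihedral element we may assume $g$ is orientation-preserving and fixes $\mathrm{Fix}(T)$ pointwise, whence it fixes each $\overline J_a$ setwise; commuting with the injective maps $T\restriction\overline J_a$ then forces $g$, by induction on depth, to fix every $T$-terminating point, a dense subset of $S^1$, so $g=\mathrm{id}$. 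Thus $Z$ is exactly the stated dihedral group of order $2m$, and $\Phi$ is unique up to postcomposition by its elements.

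Finally I would transport Theorems~\ref{ref27} and~\ref{ref30} through $\Phi$. By Theorem~\ref{ref27}, together with the converse that a $B$-terminating point is a word in the $\sm{A}_a$ (which have entries in $\Zbb[i]$) applied to a vertex $\theta_a\in\Qbb(i)$, the $B$-terminating points are precisely $S^1\cap\Qbb(i)$, i.e.\ the degree-$1$ points over $\Qbb(i)$; by Theorem~\ref{ref30} the eventually-$B$-periodic points are precisely the degree-$2$ ones. So the set of points of degree $\le2$ over $\Qbb(i)$ is the disjoint union of these two classes, which $\Phi$ carries onto the disjoint union of the $T$-terminating and eventually-$T$-periodic points, that is, onto the points with finite $T$-orbit; since $T^n(z)=z^{\pm(m-1)^n}$, this set is exactly $S^1_\text{tor}$. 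Moreover $\Phi$ sends the $\theta_a$ --- the constant-sequence points, equivalently the $B$-fixed points --- onto $\mathrm{Fix}(T)$, hence sends $S^1\cap\Qbb(i)$ onto the $T$-terminating elements; and $z$ is $T$-terminating iff $T^h(z)\in\mathrm{Fix}(T)$ for some $h$, iff $\ord(z)\mid m(m-1)^h$ for some $h$, which since $\gcd(m,m-1)=1$ holds precisely when $\ord(z)$ is a divisor of $m$ times an integer all of whose prime factors divide $m-1$ --- i.e.\ precisely on the internal direct sum $\angles{\zeta}\oplus\bigoplus_{p\mid m-1}S^1_\text{$p$-tor}$. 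The single genuine obstacle in all of this is Lemma~\ref{ref36}: the combinatorial argument that symbolic coding determines betweenness simultaneously for $B$ and $T$. The remaining steps are bookkeeping on top of the machinery already in place.
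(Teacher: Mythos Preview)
Your proposal is correct and follows essentially the same route as the paper: build $\Phi=\psi\m\circ\varphi$ from the two symbolic codings, invoke Lemma~\ref{ref36} to see that $\Phi$ preserves the betweenness relation and is therefore a homeomorphism, handle uniqueness by reducing to a map that fixes $\mathrm{Fix}(T)$ pointwise and then inducting over $T$-terminating points, and finally transport Theorems~\ref{ref27} and~\ref{ref30} through $\Phi$ to obtain the arithmetic bijection. Your packaging of the uniqueness step as ``compute the centralizer of $T$ in $\mathrm{Homeo}(S^1)$'' is a cosmetic variant of the paper's direct comparison of two conjugacies on the dense set of terminating points; and you correctly take $\zeta=\exp(2\pi i/m)$, since the fixed points of $z\mapsto z^{-(m-1)}$ are the $m$th roots of unity (the paper's $\exp(2\pi i/(m-1))$ just before the theorem is a slip).
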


Before proving Theorem~\ref{ref34} we need some preliminaries.
We already encountered the ternary betweenness relation on~$S^1$ in~\S\ref{ref40}, and we now introduce the same relation on the index set $\set{0,\ldots,m-1}$, cyclically ordered in the natural way. 
The powers of $\zeta$ determine a partition of $S^1$ in the half-open intervals $J_a=\set{\zeta^a}\cup\set{x:\zeta^a\prec x\prec\zeta^{a+1}}={[}\zeta^a,\zeta^{a+1})$.
We define a binary relation $<_B$ on $S^1$ as follows: $\sigma<_B\sigma'$ if and only if $\sigma$ and $\sigma'$ lie in the same interval $I_a$, for some $a\in\set{0,\ldots,m-1}$, and $\arg(\sigma)<\arg(\sigma')$. The relation $<_T$ is defined in the analogous way, using the intervals $J_a$.
Precisely as in Definition~\ref{ref35}, but using the intervals $J_a$, we introduce the \newword{$T$-symbolic-sequence} map $\psi:S^1\to\set{0,\ldots,m-1}^\omega$.

\begin{lemma}
All\label{ref36} statements in Lemma~\ref{ref24} hold for $\psi$; in particular $\varphi$ and $\psi$ have identical range $X\subset\set{0,\ldots,m-1}^\omega$, which is described by~(i) and~(ii) in that lemma. The betweenness and the $<_B$ relations on $S^1$ are characterized in terms of $B$-symbolic sequences and the betweenness relation on $\set{0,\ldots,m-1}$ as follows: let $\varphi(\sigma)=\bm{a}$, $\varphi(\sigma')=\bm{a}'$, $\varphi(\sigma'')=\bm{a}''$. Then:
\begin{itemize}
\item[(1)] $\sigma<_B\sigma'$ if and only if there exists $t\ge0$ such that:
\begin{itemize}
\item[(1.1)] $a_h=a'_h$ for every $0\le h\le t$,
\item[(1.2)] $a_{t+1}\not=a'_{t+1}$,
\item[(1.3)] one of the following mutually exclusive conditions holds:
\begin{itemize}
\item[(1.3.1)] $t$ is even and ($a_{t+1}=a_t$ or $a_{t+1}\prec a_t\prec a'_{t+1}$),
\item[(1.3.2)] $t$ is odd and ($a'_{t+1}=a'_t$ or $a'_{t+1}\prec a_t\prec a_{t+1}$);
\end{itemize}
\end{itemize}
\item[(2)] $\sigma\prec\sigma'\prec\sigma''$ if and only if one of the following mutually exclusive conditions holds:
\begin{itemize}
\item[(2.1)] $a_0\prec a'_0\prec a''_0$,
\item[(2.2)] $a_0=a'_0\not=a''_0$ and $\sigma<_B\sigma'$,
\item[(2.3)] $a_0\not=a'_0=a''_0$ and $\sigma'<_B\sigma''$,
\item[(2.4)] $a_0=a'_0=a''_0$ and $\sigma<_B\sigma'$ and $\sigma'<_B\sigma''$.
\end{itemize}
\end{itemize}
We have an analogous characterization of betweenness and $<_T$ in terms of $T$-symbolic sequences.
\end{lemma}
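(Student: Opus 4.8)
The plan is to set everything up in a framework common to $B$ and $T$. On the $B$-side write $\sm A_a$ for the reflection furnishing the $a$-th inverse branch of $B$ and $\theta_a$ for its fixed endpoints, as in the proof of Lemma~\ref{ref24}; on the $T$-side, $T$ fixes each $\zeta^a$, the restriction $T|_{\overline J_a}$ is an orientation-reversing homeomorphism onto $\bigcup_{b\ne a}\overline J_b$, and I write $\sm A_a^T$ for its inverse (the branch of $T^{-1}$ landing in $\overline J_a$). One records the needed structural facts: $J_a\cap T^{-1}[J_a]=\{\zeta^a\}$ (so a constant tail $\overline a$ of $\psi$ forces the orbit to land on $\zeta^a$), $\zeta^a\notin T[J_{a-1}]$ (so $\psi$ has no tail $(a-1)\overline a$), and $T^2$ restricted to the sub-arc $J_a\cap T^{-1}[\overline J_{a+1}]$ expands away from the endpoint $\zeta^{a+1}$. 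These are exactly the properties of $(B,I_a,\sm A_a,\theta_a)$ invoked in the proof of Lemma~\ref{ref24}.

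First I would prove that all of Lemma~\ref{ref24} holds for $\psi$, by running that proof verbatim with $(T,J_a,\sm A_a^T,\zeta^a)$ in place of $(B,I_a,\sm A_a,\theta_a)$. Two steps require a separate argument. (a) The shrinking of the nested intervals in Case~2: since $T$ is the linear expanding circle map $z\mapsto z^{-(m-1)}$, each $\sm A_a^T$ contracts arclength by the constant factor $(m-1)^{-1}$, so the $t$-th nested interval has length $(m-1)^{-t}\arclength(\overline J_{a_t})\to 0$; this replaces, and simplifies, the unimodularity/integer-coordinate argument. (b) The exclusion of the tail $\overline{a(a+1)}$, quoted from \cite{castle_et_al11} for $B$, I would establish directly for $T$: if $\psi(\sigma)$ had that tail from time $t$, then $\tau=T^t(\sigma)$ would lie in $\overline K:=(T|_{\overline J_a})^{-1}[\overline J_{a+1}]$, an arc inside $\overline J_a$ with $\zeta^{a+1}$ as one endpoint, with $(T^2)^n(\tau)\in\overline K$ for all $n$; but $T^2$ fixes $\zeta^{a+1}$ and, being linear of slope $(m-1)^2$, acts on $\overline K$ in the arclength coordinate centred at $\zeta^{a+1}$ as $s\mapsto(m-1)^2 s$, so the only point whose $T^2$-orbit stays in $\overline K$ is $\zeta^{a+1}$, which has $\psi$-sequence $\overline{a+1}$, not $\overline{a(a+1)}$. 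The remaining verifications (injectivity, the $(a-1)\overline a$ exclusion, the constant-tail case) are the same bijection arguments. Statement~(2) of the lemma is then immediate: by Lemma~\ref{ref24} both $\varphi[S^1]$ and $\psi[S^1]$ equal the set $X$ of sequences satisfying~(i)--(ii).

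For the symbolic characterization of $<_B$ and $\prec$, take $\sigma\ne\sigma'$ with $a_0=a'_0$ and let $t\ge 0$ be maximal with $a_h=a'_h$ for $0\le h\le t$ (finite, by injectivity of $\varphi$). Then $\sigma,\sigma'$ lie in a common nested interval $\overline L_t\subseteq\overline I_{a_0}$ on which $B^t$ is a homeomorphism onto $\overline I_{a_t}$; being a composition of $t$ reflections, $B^t|_{\overline L_t}$ is orientation-$(-1)^t$, so $\sigma<_B\sigma'$ iff $B^t\sigma<_B B^t\sigma'$ when $t$ is even and iff $B^t\sigma'<_B B^t\sigma$ when $t$ is odd. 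Now $B^t\sigma,B^t\sigma'\in I_{a_t}$ and $a_{t+1}\ne a'_{t+1}$: if $a_{t+1}=a_t$ then by Lemma~\ref{ref24}(i) $B^t\sigma=\theta_{a_t}$, the $<_B$-least point of $I_{a_t}$ (symmetrically if $a'_{t+1}=a'_t$); otherwise both points lie in the interior of $I_{a_t}$, and since $\sm A_{a_t}$ carries the $\arg$-order on $I_{a_t}$ to the enumeration $I_{a_t-1},I_{a_t-2},\dots,I_{a_t+1}$ of the remaining intervals, $B^t\sigma<_B B^t\sigma'$ holds precisely when $a_{t+1}$ precedes $a'_{t+1}$ there, i.e.\ when $a'_{t+1}\prec a_{t+1}\prec a_t$, which by cyclic invariance of $\prec$ equals $a_{t+1}\prec a_t\prec a'_{t+1}$. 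Splitting on the parity of $t$ gives (1.3.1)--(1.3.2). For~(2), one reduces $\prec$ on $S^1$ to the triple $(a_0,a'_0,a''_0)$ together with $<_B$ by a case analysis on the pattern of coincidences among $a_0,a'_0,a''_0$, using that each $I_a$ is an arc: if they are pairwise distinct, $\sigma\prec\sigma'\prec\sigma''$ is equivalent to $a_0\prec a'_0\prec a''_0$; if exactly one pair coincides, the third point is separated from that pair and betweenness becomes an $<_B$-comparison between the two points sharing an interval; if all coincide, betweenness inside a single arc is the conjunction of the two $<_B$-inequalities. Finally, statement~(4) ($<_T$ and $\psi$) follows by repeating this paragraph word for word with $(T,J_a,\sm A_a^T,\zeta^a)$, since step one established that these data satisfy the same structural properties.

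The hard part will be the orientation and cyclic-order bookkeeping in the third paragraph: one must track simultaneously the fact that $B$ and $T$ reverse orientation (so the parity of $t$ flips each comparison) and the precise way a single reflection permutes the cyclic order of the remaining intervals, and then translate everything faithfully into the betweenness relation on $\{0,\dots,m-1\}$ using its invariance under cyclic rotation. The only other point requiring genuine work beyond transcription is the $T$-analogue of the $\overline{a(a+1)}$-exclusion sketched above, which substitutes for the appeal to the billiard literature; the rest is the proof of Lemma~\ref{ref24} carried over.
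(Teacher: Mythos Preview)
Your proposal is correct and follows essentially the same approach as the paper: you transfer the proof of Lemma~\ref{ref24} to $T$ by direct substitution of $(T,J_a,\sm A_a^T,\zeta^a)$ for $(B,I_a,\sm A_a,\theta_a)$, supplying the two needed $T$-specific arguments (contraction by $(m-1)^{-1}$ for the nested intervals, and the expansion argument ruling out tails $\overline{a(a+1)}$), and then characterize $<_B$ by reducing to the first time the symbolic sequences differ and tracking the parity of orientation reversals---exactly as the paper does. Your treatment is slightly more explicit than the paper's (which dismisses statement~(2) as ``clear'' and cites~\cite{castle_et_al11} for the tail exclusion on the $B$-side), but the underlying ideas are identical.
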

\begin{proof}
The proof of Lemma~\ref{ref35} easily extends to the case of the map $T$. Apart from the obvious modifications (use $J_a$ for $I_a$, and $\zeta^a$ for $\theta_a$), one has to replace the occurrences of $B$ with occurrences of $T$, and those of $\sm{A}_a$ with $T_a\m$, the latter being the $a$th inverse branch of $T$, i.e., the map that associates to $\sigma\in \bigcup_{b\not=a}\overline{J}_b$ its unique $-(m-1)$th root lying in $\overline{J}_a$. The fact that no $T$-symbolic sequence has tail $\overline{a(a+1)}$ is easy; indeed, any point having that symbolic sequence should jump forever from $J_a$ to $J_{a+1}$. But at each jump its arclength distance from the fixed point $\zeta^{a+1}$ increases by a factor $m-1$, so the point will eventually escape from $J_a\cup J_{a+1}$. Finally, the analogue of the sequence~\eqref{eq8} surely shrinks to a singleton, because at each step the arclengths shrink by a factor $m-1$. With these modifications, the proof carries through verbatim.

We prove statement~(1).
Suppose $\sigma$ and $\sigma'$ are different, but lie in the same interval $I_{a_0}$. Then there exists $t\ge0$ such that for $t$ steps the successive $B$-images of $\sigma$ and $\sigma'$ keep on lying in the same interval, while $B^{t+1}(\sigma)$ and $B^{t+1}(\sigma')$ lie in the different intervals $I_{a_{t+1}}$ and $I_{a'_{t+1}}$, respectively. Since $B$ is orientation-reversing, $\sigma<_B\sigma'$ if and only if either $t$ is even and $B^t(\sigma)<_B B^t(\sigma')$, or
$t$ is odd and $B^t(\sigma')<_B B^t(\sigma)$. We can then assume without loss of generality $t=0$, and observe that $\sigma<_B\sigma'$ holds if and only if $\sigma=\theta_{a_0}$ (which is equivalent to $a_1=a_0$), or $B(\sigma)\prec\theta_{a_0}\prec B(\sigma')$ (which is equivalent to $a_1\prec a_0\prec a'_1$, since now $B(\sigma)$ and $B(\sigma')$ lie in different intervals, both different from $I_{a_0}$).

Statement (2) is clear, as is the fact that all of the proof applies to the map~$T$.
\end{proof}

\begin{proof}[Proof of Theorem~\ref{ref34}]
Let $S$ be the shift on $X=\varphi[S^1]=\psi[S^1]$, and define $\Phi=\psi\m\circ\varphi$. Then the inner squares in
\begin{equation}\label{eq11}
\begin{tikzcd}
S^1 \arrow[dd, bend right, "\Phi"'] \arrow[r,"B"] \arrow[d,"\varphi"]
& S^1 \arrow[dd, bend left, "\Phi"] \arrow[d,"\varphi"'] \\
X \arrow[r,"S"] & X \\
S^1 \arrow[r,"T"] \arrow[u,"\psi"'] & S^1 \arrow[u,"\psi"]
\end{tikzcd}
\end{equation}
commute, so the outer rectangle commutes as well. Let $\sigma$, $\sigma'$, $\sigma''$ be distinct points of $S^1$. Then $\sigma\prec\sigma'\prec\sigma''$ holds if and only if the conditions of Lemma~\ref{ref36} apply to $\varphi(\sigma)$, $\varphi(\sigma')$, $\varphi(\sigma'')$. By construction, $\varphi(\sigma)=\psi\bigl(\Phi(\sigma)\bigr)$ and analogously for $\sigma'$ and $\sigma''$; therefore $\sigma'$ is between $\sigma$ and $\sigma''$ if and only if $\Phi(\sigma')$ is between $\Phi(\sigma)$ and $\Phi(\sigma'')$.
Since the topology of $S^1$ is definable in terms of betweenness, $\Phi$ is a homeomorphism.

Let $\Phi_1$ be any homeomorphism that makes the outer rectangle in~\eqref{eq11} commute. For every $h\in\set{0,\ldots,m-1}$ and every $e\in\set{1,-1}$, the map $Q(z)=\zeta^hz^e$ commutes with $T$, so that $Q\circ\Phi_1$ too makes the outer rectangle commute. We therefore assume that $\Phi_1$ is orientation-preserving and fixes~$1$, and prove $\Phi_1=\Phi$.
As $\Phi_1$ and $\Phi$ are homeomorphisms and the set of $B$-terminating points is dense in $S^1$, it is enough to show that $\Phi_1$ agrees with $\Phi$ on this set; in other words, that if $\sigma$ has $B$-symbolic sequence $a_0\ldots a_{t-1}\overline{a_t}$ with $a_{t-1}\not=a_t$, then $\Phi_1(\sigma)$ has $T$-symbolic sequence $a_0\ldots a_{t-1}\overline{a_t}$.

We work by induction on $t$. If $t=0$, then $\sigma=\theta_{a_0}$. Since $\Phi_1$ is orientation-preserving, sends the set $\set{\theta_0,\ldots,\theta_{m-1}}$ of $B$-fixed points to the set $\set{\zeta^0,\ldots,\zeta^{m-1}}$ of $T$-fixed points, and fixes $1=\theta_0=\zeta^0$, we have $\Phi_1(\theta_a)=\zeta^a$ for every $a$. In particular, $\Phi_1(\sigma)=\zeta^{a_0}$, which has $T$-symbolic sequence $\overline{a_0}$.
Let $t>0$; then $a_0\not=a_1$, which implies $\sigma\not=\theta_{a_0}$ and $\Phi_1(\sigma)\not=\zeta^{a_0}$. By the inductive hypothesis, the statement is true for all points that land in a $B$-fixed point in $t-1$ steps. Since $B(\sigma)$ is one of these points, we have
\[
\varphi\bigl(B(\sigma)\bigr)=\psi\bigl(\Phi_1(B(\sigma))\bigr)=
\psi\bigr(T(\Phi_1(\sigma))\bigr)=a_1\ldots a_{t-1}\overline{a_t}.
\]
Thus $\psi\bigr(\Phi_1(\sigma)\bigr)=ba_1\ldots a_{t-1}\overline{a_t}$ for some $b$, and we must show $b=a_0$. Suppose not; then we have
$\zeta^{a_0}\prec\zeta^b\prec\Phi_1(\sigma)$, while
$\zeta^{a_0}\prec\Phi(\sigma)\prec\zeta^b$.
Applying the order-preserving homeomorphism $\Phi_1\m$ to the former relation, and $\Phi\m$ to the latter, we get
$\theta_{a_0}\prec\theta_b\prec\sigma$ and
$\theta_{a_0}\prec\sigma\prec\theta_b$, which is impossible; therefore $b=a_0$ and our first statement is proved.

By Theorems~\ref{ref27} and~\ref{ref30} the set of points in $S^1$ of degree $1$ (respectively, $2$) over $\Qbb(i)$ is the set of $B$-terminating (respectively, $B$-preperiodic) points. Their $\Phi$-images are then the $T$-terminating (respectively, $T$-preperiodic) points. It is easily seen the every $T$-terminating or $T$-preperiodic point must have the form $\exp(2\pi iq)$ for some rational number $q$, i.e., must lie in $S^1_\text{tor}$. We have the decomposition $S^1_\text{tor}=H_1\cdot H_2$, where $H_1$ (respectively, $H_2$) is the inner sum of all Pr\"ufer groups $S^1_\text{$p$-tor}$ with $p\nmid m-1$ (respectively, $p\mid m-1$). Now, given $\sigma\in S^1_\text{tor}$, repeated applications of $T$ kill the $H_2$ part, and as soon as this happens the periodicity starts. More precisely, let $h\ge0$ be minimum such that $T^h(\sigma)\in H_1$. Then $T^h(\sigma)$ is $T$-periodic, because raising to the $-(m-1)$th power is an automorphism of $H_1$ of finite order. In particular, $\sigma$ is $T$-terminating if and only if $T^h(\sigma)$ is a fixed point, i.e., a power of $\zeta$. Thus, $\sigma$ is $T$-terminating precisely when it belongs to $\angles{\zeta}\cdot H_2$.
\end{proof}

We note as an aside that the pushforward probability measure $\Phi\m_*\lambda$, where $\lambda$ is the Lebesgue measure on the circle, is $B$-invariant, and is the measure of maximal entropy for $B$.

For the rest of this paper we consider $B$, $T$, $\Phi$ as selfmaps of $\ooi$, as in Figure~\ref{fig7}. This improves visualization, and makes $\Phi=\psi\m\circ\varphi$ the unique homeomorphism of $\ooi$ (with the topology inherited from $\Rbb$, not from $S^1$) that conjugates $B$ with~$T$. Accordingly, $<$ will now denote the standard non-circular orders on $\ooi$ and on $\set{0,\ldots,m-1}$.
We will abuse language by writing $I_a$ and $J_a$ for the $\arg$-images in $\ooi$ of the intervals 
$I_a$ and $J_a$ of $S^1$.

In the next Theorem~\ref{ref37} we
provide an explicit formula for $\Phi(x)$, analogous to the Denjoy-Salem formula for the classical case~\cite{denjoy38}, \cite[pp.~435-436]{salem43}, and to the formula in~\cite[Theorem~1]{bocalinden18} for the Minkowski function induced by the Romik map.
We define a function $d:\set{0,\ldots,m-1}^2\setminus\set{\diag}\to\set{0,\ldots,m-1}$ by
\[
d(a,b)=\begin{cases}
a+1,&\text{if $a<b$};\\
a,&\text{otherwise}.
\end{cases}
\]

\begin{theorem}
Let\label{ref37} $x\in\ooi$ have $B$-symbolic sequence $\bm{a}$. Then
\begin{equation}\label{eq12}
\Phi(x)=
\frac{1}{m-1}\sum_{t=0}^\infty
d(a_t,a_{t+1})\biggl(-\frac{1}{m-1}\biggr)^t.
\end{equation}
\end{theorem}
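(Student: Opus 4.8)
The plan is to express $\Phi(x)$ by iterating the inverse branches of $T$. Since $\Phi=\psi\m\circ\varphi$, a point $x\in\ooi$ with $B$-symbolic sequence $\bm{a}$ is carried by $\Phi$ to the unique $y\in\ooi$ whose $T$-symbolic sequence is $\bm{a}$; so it suffices to recover $y$ from $\bm{a}$. Let $\phi_a:=T_a\m$ denote the $a$-th inverse branch of $T$, as in the proof of Lemma~\ref{ref36}. Arguing as in the proof of Lemma~\ref{ref24}, whose statement carries over to $\psi$ verbatim by Lemma~\ref{ref36}, the point $y$ is the unique element of $\bigcap_{t\ge0}\phi_{a_0}\circ\cdots\circ\phi_{a_{t-1}}\bigl[\overline{J}_{a_t}\bigr]$, these nested intervals having diameters tending to $0$ because each $\phi_a$ contracts $\ooi$ by the factor $1/(m-1)$. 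Equivalently, using the conjugacy $T\circ\Phi=\Phi\circ B$ of Theorem~\ref{ref34} and induction on the shift, $y=\phi_{a_0}\bigl(\phi_{a_1}(\phi_{a_2}(\cdots))\bigr)$, the infinite composition converging by uniform contraction.

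The substantive step is to identify $\phi_a$ explicitly. In $\arg$-coordinates $T$ is $x\mapsto-(m-1)x\pmod 1$, with slope $-(m-1)$ on each of its $m-1$ branches and with fixed points the left endpoints $\zeta^a$ of the intervals $J_a$; hence $\phi_a$ is orientation-reversing, of constant slope $-1/(m-1)$, maps onto $\overline{J}_a$, and a direct check gives that on $\ooi\setminus J_a$
\[
\phi_a(u)=\frac{(a+1)-u}{m-1}\ \text{ if $u$ lies to the right of $J_a$ in $\ooi$,}\qquad
\phi_a(u)=\frac{a-u}{m-1}\ \text{ if $u$ lies to the left of $J_a$.}
\]
At the $t$-th stage of the composition the argument fed into $\phi_{a_t}$ is the point $y^{(t+1)}$ whose $T$-symbolic sequence is $a_{t+1}a_{t+2}\ldots$, so $y^{(t+1)}\in\overline{J}_{a_{t+1}}$. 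Conditions~(i) and~(ii) of Lemma~\ref{ref24} (carried over by Lemma~\ref{ref36}) ensure that $y^{(t+1)}$ is never the right endpoint $\zeta^{a_t+1}$ of $J_{a_t}$ — the tail $a_t\overline{a_t+1}$ being forbidden — and equals the left endpoint $\zeta^{a_t}$ only when $a_{t+1}=a_t$. Hence, outside that last situation, $y^{(t+1)}$ lies strictly to the right of $J_{a_t}$ exactly when $a_{t+1}>a_t$, and strictly to the left exactly when $a_{t+1}<a_t$; in every instance the branch of $\phi_{a_t}$ applied to $y^{(t+1)}$ is $u\mapsto\bigl(d(a_t,a_{t+1})-u\bigr)/(m-1)$, the additive constant being $a_t+1$ precisely when $a_t<a_{t+1}$.

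It remains to compose. Induction on $t$ gives
\[
\phi_{a_0}\circ\cdots\circ\phi_{a_{t-1}}(v)=\frac{1}{m-1}\sum_{j=0}^{t-1}d(a_j,a_{j+1})\Bigl(-\frac{1}{m-1}\Bigr)^{j}+\Bigl(-\frac{1}{m-1}\Bigr)^{t}v ,
\]
and, letting $t\to\infty$ with $v=y^{(t)}\in\ooi$ bounded and $1/(m-1)<1$, the last term vanishes and one obtains exactly~\eqref{eq12}; the series converges absolutely since $0\le d(a_j,a_{j+1})\le m-1$. The constant-tail case $\bm{a}=a_0\cdots a_{h-1}\overline{a_h}$ needs no separate treatment: there $d(a_h,a_h)=a_h$ and $\phi_{a_h}$ fixes $\zeta^{a_h}$, so the same formula applies. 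I expect the only real difficulty to be the branch bookkeeping of the middle paragraph: one must verify with care, invoking~(i) and~(ii), that at each stage $y^{(t+1)}$ genuinely sits on one definite side of $J_{a_t}$ — in particular that it never coincides with the forbidden endpoint $\zeta^{a_t+1}$ — so that the correct one of the two affine expressions for $\phi_{a_t}$ is the one in force; everything else is a routine geometric-series computation.
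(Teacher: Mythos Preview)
Your proof is correct and follows essentially the same route as the paper's: both identify the inverse branches $T_a\m$ as the affine maps $u\mapsto\bigl(d(a,b)-u\bigr)/(m-1)$, with the additive constant depending on which side of $J_a$ the argument sits, and then compose and pass to the limit. Your branch bookkeeping is in fact slightly more explicit than the paper's (which simply applies the composition to the seed~$0$, remarking parenthetically that any seed would do, and sets $a_n=0$), but the argument is the same.
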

\begin{proof}
The statement amounts to saying that $\psi\m(\bm{a})$ equals the value of the absolutely convergent series on the right-hand side of~\eqref{eq12}. By construction,
\[
\psi\m(\bm{a})=
\lim_{n\to\infty}
T_{a_0}\m T_{a_1}\m\cdots T_{a_{n-1}}\m(0),
\]
where $T_{a_t}\m$ is the $a_t$th inverse branch of $T$ discussed in the proof of Lemma~\ref{ref36} (instead of $0$, any point in $\ooi$ would do). We recall that, by definition, $T_a\m$ is that inverse branch of $T$ that sends $\bigcup_{b\not=a}\overline{J}_b$ onto $\overline{J}_a$.
Here a picture may help: rotate the graph of $T$ in Figure~\ref{fig7} (left) along the diagonal, and look at its $m=4$ inverse branches, the first two being
\begin{align*}
T_0\m(x)&=-x/3+1/3,\quad\,\text{on $[1/4,1]$;}\\
T_1\m(x)&=\begin{cases}
-x/3+1/3,&\text{on $[0,1/4]$;}\\
-x/3+2/3,&\text{on $[1/2,1]$.}
\end{cases}
\end{align*}

A brief pondering over such a picture
shows that $T_a\m(x)$ equals $-x/(m-1)+(a+1)/(m-1)$ on $\bigcup_{b>a}\overline{J}_b$, and equals $-x/(m-1)+a/(m-1)$ on $\bigcup_{b<a}\overline{J}_b$; in short, 
\[
T_{a_t}\m(x)=-\frac{x}{m-1}+
\frac{d(a_t,a_{t+1})}{m-1}.
\]
Applying induction to the above formula one easily proves that
\[
T_{a_0}\m T_{a_1}\m\cdots T_{a_{n-1}}\m(0)=
\frac{1}{m-1}\sum_{t=0}^{n-1}
d(a_t,a_{t+1})\biggl(-\frac{1}{m-1}\biggr)^t,
\]
(where we set $a_n=0$), and the statement follows by letting $n$ tend to infinity.
\end{proof}

If $x$ is $B$-preperiodic, \eqref{eq12} yields a finite expression for $\Phi(x)$. Indeed, writing for short $d_t=d(a_t,a_{t+1})$ and $\bm{d}=d_0d_1\ldots$, we have that the map $\bm{a}\mapsto\bm{d}$ is shift-invariant; in particular, it sends preperiodic sequences to preperiodic ones.
Hence, for $\bm{a}=\varphi(x)$ and
$\bm{d}=\bm{d}(\bm{a})=d_0\ldots d_{h-1}\overline{d_h\ldots d_{h+p-1}}$ we set
\[
y=\sum_{t=0}^{h-1}d_t\biggl(-\frac{1}{m-1}\biggr)^t,\qquad
z=\sum_{t=0}^{p-1}d_{h+t}\biggl(-\frac{1}{m-1}\biggr)^t,
\]
and obtain by a straightforward computation
\begin{equation}\label{eq15}
\Phi(x)=\psi\m(\bm{a})=\frac{1}{m-1}
\biggl(y+\frac{(-1)^h(m-1)^{-h}z}{1+(-1)^{p+1}(m-1)^{-p}}\biggr).
\end{equation}

\begin{example}
The\label{ref47} point $\omega_0$ of Example~\ref{ref31} has $B$-symbolic sequence $\bm{a}=45\overline{35420}$, and $m=6$. Thus $\bm{d}=55\overline{45421}$ and, applying~\eqref{eq15},
\[
\psi\m(\bm{a})=\frac{32243}{39075}=
\frac{1}{3}+\frac{11}{25}+\frac{27}{521}.
\]
Multiplying successively by $-(m-1)=-5$, and working in $\Qbb/\Zbb\simeq S^1_\text{tor}$, the summand $1/3$ is fixed (because $-5\equiv1$ modulo $3$), and $11/25$ gets killed in two steps. So it only remains the summand $27/521$, which yields a periodic orbit of length $5$ (because $-5$ has order $5$ modulo $521$), as expected.

The Galois conjugate $\alpha_0$ of $\omega_0$ has $B$-symbolic sequence $\bm{a}'=45\overline{02453}$ and
\[
\psi\m(\bm{a}')=\frac{62873}{78150}=
\frac{1}{2}+\frac{2}{3}+\frac{23}{25}+\frac{374}{521}+\text{integer part},
\]
with identical dynamical behaviour. The appearance of the same primes at the denominators is not surprising. Indeed, given a periodic orbit of length $p$, a simple computation shows that the only primes whose powers may appear as denominators of summands are those dividing $(m-1)^p+(-1)^{p+1}$, in our case $2$, $3$, $521$.
\end{example}

\section{Singularity and H\"older exponent}\label{ref38}

We maintain the setting described before Theorem~\ref{ref37}. Since $\Phi$ is a monotonically increasing homeomorphism of $[0,1)$, it is differentiable $\lambda$-a.e.~($\lambda$ referring to the Lebesgue measure) with finite derivative.

\begin{theorem}
The\label{ref33} function $\Phi$ is purely singular (i.e., $\Phi'=0$ $\lambda$-a.e.).
\end{theorem}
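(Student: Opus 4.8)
The plan is to recast pure singularity of $\Phi$ as mutual singularity of two $B$-invariant measures on $\ooi$. First I would observe that, $\Phi$ being a monotonically increasing self-homeomorphism of $\ooi$ with $\Phi(0)=0$, it is differentiable $\lambda$-a.e., and that the probability measure $\nu$ on $\ooi$ defined by $\nu(E)=\lambda\bigl(\Phi[E]\bigr)$ (equivalently $\nu=\Phi\m_*\lambda$, as in the remark following Theorem~\ref{ref34}) is precisely the Lebesgue--Stieltjes measure of $\Phi$: its distribution function $x\mapsto\nu\bigl([0,x)\bigr)=\lambda\bigl(\Phi[\,[0,x)\,]\bigr)$ equals $\Phi$ itself. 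Hence the Lebesgue decomposition of $\nu$ against $\lambda$ reads $\nu=\Phi'\lambda+\nu_{\mathrm s}$ with $\nu_{\mathrm s}\perp\lambda$, so that $\Phi'=0$ $\lambda$-a.e.\ is equivalent to $\nu\perp\lambda$; this is what has to be proved. Moreover $\nu$ is $B$-invariant: from $\Phi\circ B=T\circ\Phi$ and the $T$-invariance of $\lambda$ (as $T$ is a surjective group endomorphism of the circle) one gets $\nu(B\m E)=\lambda\bigl(\Phi[B\m E]\bigr)=\lambda\bigl(T\m\Phi[E]\bigr)=\lambda\bigl(\Phi[E]\bigr)=\nu(E)$.

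Next I would bring in the absolutely continuous invariant measure $\mu$ of Theorem~\ref{ref32}, recording three of its features. (i)~$\mu\sim\lambda$: at each $x\in\ooi\setminus\{x_0,\dots,x_{m-1}\}$ the density $\sum_ah_a$ of $\mu$ reduces to the single term $h_b$, where $x\in(x_b,x_{b+1})$, and the integral representation of $h_b$ found in the proof of Theorem~\ref{ref32}(ii) shows $h_b(x)>0$ (the integration range $[0,x_b]\cup[x_{b+1},1]$ having positive length, as $m\ge3$); thus the density is $\lambda$-a.e.\ positive and $\mu,\lambda$ are mutually absolutely continuous. (ii)~$\mu$ is an infinite measure, since $h_a$ has a non-integrable pole $\sim(x-x_a)\m$ at the indifferent fixed point $x_a$, so $h_a\notin L^1(\lambda)$. (iii)~$(\ooi,\mu,B)$ is conservative and ergodic, by Theorem~\ref{ref32}(iii).

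The core step is then to prove $\nu\perp\mu$; once this is established, (i) upgrades it to $\nu\perp\lambda$, which finishes the argument. Here I would appeal to the standard fact of infinite ergodic theory (see~\cite{aaronson97}) that a conservative ergodic transformation preserving a $\sigma$-finite \emph{infinite} measure carries no absolutely continuous invariant probability measure, and, more generally, that any finite invariant measure is mutually singular with $\mu$. Concretely, the mechanism is that the Birkhoff averages $n\m\sum_{k=0}^{n-1}f\circ B^k$ vanish pointwise $\mu$-a.e.\ for every $f\in L^1(\mu)$ (there is no $L^1$ ergodic theorem in infinite measure): choosing a $\mu$-conull $B$-invariant set $Y$ on which these averages vanish for a countable $L^1(\mu)$-dense family of nonnegative $f$'s, $B$-invariance of the finite measure $\nu\restriction Y$ gives $\int_Yf\ud\nu=\int_Yn\m\sum_{k<n}f\circ B^k\ud\nu\to0$ by dominated convergence, so $\nu\restriction Y$ kills every $f\in L^1(\mu)$ with $0\le f\le1$; since $\mu$ is $\sigma$-finite this forces $\nu(Y)=0$, i.e.\ $\nu$ lives on the $\mu$-null set $\ooi\setminus Y$.

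I expect the only genuinely non-routine ingredient to be that cited vanishing of $L^1$ Birkhoff averages in infinite measure (equivalently, the incompatibility of a finite invariant measure with a conservative ergodic infinite one); the remaining steps ---identifying $\nu$ with the Stieltjes measure of $\Phi$, checking $\mu\sim\lambda$ and $\mu(\ooi)=\infty$, and transporting singularity along $\mu\sim\lambda$--- are routine.
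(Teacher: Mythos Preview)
Your argument is correct and takes a genuinely different route from the paper. You recast singularity as $\nu\perp\lambda$ for the pushforward $\nu=\Phi\m_*\lambda$, observe that $\nu$ is a $B$-invariant \emph{probability} measure, and then invoke the standard infinite-ergodic-theory fact (vanishing of $L^1$ Birkhoff averages, as in~\cite{aaronson97}) that a conservative ergodic system with $\sigma$-finite infinite invariant measure $\mu$ admits no nonzero finite invariant measure absolutely continuous with respect to $\mu$; since $\mu\sim\lambda$ this forces $\nu\perp\lambda$. The paper instead argues pointwise: using Poincar\'e recurrence for the infinite measure $\mu$ it shows that $\lambda$-a.e.\ $x$ \emph{moves parabolically} infinitely often (its $B$-symbolic sequence contains infinitely many blocks $a(a\pm1)a$), and then an explicit matrix computation based on Lemma~\ref{ref39} shows that at such a point the ratio of consecutive cylinder lengths cannot tend to $m-1$, which it would have to if $\Phi'(x)$ existed and were nonzero. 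Your approach is shorter and more conceptual, and would apply verbatim to any conjugacy between a map with an infinite conservative ergodic a.c.i.m.\ and a map preserving Lebesgue measure; the paper's approach is more hands-on and exposes the mechanism (the parabolic cusps are what make the cylinders shrink too slowly relative to the uniform rate $(m-1)^{-k}$). One small point worth making explicit in your write-up: for $\nu\!\restriction\! Y$ to be genuinely $B$-invariant you need $B^{-1}Y=Y$ exactly (not merely modulo $\mu$), and this is automatic provided the $f$'s in your countable family are chosen \emph{bounded}---say indicators of finite-$\mu$-measure sets exhausting $\ooi$---since for bounded $f$ the set $\{x:\tfrac1n S_nf(x)\to0\}$ is strictly $B$-invariant.
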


We need a preliminary lemma, for which we refer to the notation introduced in Definition~\ref{ref28}.

\begin{lemma}
For every $a$, we\label{ref39} have $\bm{w}_{a-1}+\bm{w}_a=q_a\bm{t}_a$ for some $q_a\in\Zbb\pp$. Moreover, the identities
\begin{equation}\label{eq17}
\begin{split}
\bm{A}_{a-1}\bm{w}_a&=\bm{w}_{a-1}+q_a\bm{t}_a,\\
\bm{A}_a\bm{w}_{a-1}&=\bm{w}_a+q_a\bm{t}_a,
\end{split}
\end{equation}
hold.
\end{lemma}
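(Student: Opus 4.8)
The plan is to reduce the whole statement to the single scalar identity $\angles{\bm{w}_{a-1},\bm{w}_a}=-1$. Grant it for a moment. Since $\bm{A}_{a-1}=\bm{R}_{\bm{w}_{a-1}}$ and $\angles{\bm{w}_{a-1},\bm{w}_{a-1}}=1$, formula~\eqref{eq24} gives $\bm{A}_{a-1}\bm{w}_a=\bm{w}_a-2\angles{\bm{w}_{a-1},\bm{w}_a}\bm{w}_{a-1}=\bm{w}_a+2\bm{w}_{a-1}=(\bm{w}_{a-1}+\bm{w}_a)+\bm{w}_{a-1}$, and symmetrically $\bm{A}_a\bm{w}_{a-1}=\bm{w}_{a-1}+2\bm{w}_a=(\bm{w}_{a-1}+\bm{w}_a)+\bm{w}_a$; so, once we also know that $\bm{w}_{a-1}+\bm{w}_a=q_a\bm{t}_a$ with $q_a\in\Zbb\pp$, both equalities in~\eqref{eq17} drop out. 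But the shape $\bm{w}_{a-1}+\bm{w}_a=q_a\bm{t}_a$ is itself a consequence of $\angles{\bm{w}_{a-1},\bm{w}_a}=-1$: that identity makes $\bm{w}_{a-1}+\bm{w}_a$ isotropic, as $\angles{\bm{w}_{a-1}+\bm{w}_a,\bm{w}_{a-1}+\bm{w}_a}=1+2(-1)+1=0$; and both $\bm{w}_{a-1}$ and $\bm{w}_a$ are $\angles{\argomento,\argomento}$-orthogonal to $\bm{t}_a$ (immediate from the cross-product formula defining $\bm{w}_a$ in Definition~\ref{ref28}, since $\bm{t}_a$ is a common vertex), so $\bm{w}_{a-1}+\bm{w}_a$ lies in the hyperplane $\bm{t}_a^\perp$. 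On this hyperplane $\angles{\argomento,\argomento}$ is positive semidefinite with radical exactly $\Rbb\bm{t}_a$ (after a rotation one may take $\bm{t}_a=[1,0,1]$, and then $\bm{t}_a^\perp=\set{(s,t,s)}$ carries the form $t^2$); hence its only isotropic vectors are the multiples of $\bm{t}_a$, forcing $\bm{w}_{a-1}+\bm{w}_a=q_a\bm{t}_a$. Integrality of $q_a$ is then automatic, since $\bm{w}_{a-1},\bm{w}_a\in\Zbb^3$ by unimodularity of the partition while $\bm{t}_a$ is a primitive pythagorean triple.

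So everything comes down to proving $\angles{\bm{w}_{a-1},\bm{w}_a}=-1$ and, at the same time, that the coefficient $q_a$ is strictly positive — and this is the only step where I expect to compute. By the semidefiniteness remark above, $\angles{\bm{w}_{a-1},\bm{w}_a}$ is already forced to be $\pm1$; the content is the sign, i.e.\ that it is the \emph{sum} $\bm{w}_{a-1}+\bm{w}_a$, not the difference, that is parabolic. I would settle this exactly as in the proof of Lemma~\ref{ref14}(i): conjugating by a rotation in $\SOO_3\cap\SOO_{2,1}$ — which preserves both $\angles{\argomento,\argomento}$ and the defining formula for the $\bm{w}$'s, hence costs nothing — assume $\bm{t}_a=[1,0,1]$, and write $\bm{t}_{a+1}=[\cos\beta,\sin\beta,1]$, $\bm{t}_{a-1}=[\cos\gamma,-\sin\gamma,1]$, where $\beta,\gamma\in(0,2\pi)$ are the counterclockwise arclengths of $[\bm{t}_a,\bm{t}_{a+1}]$ and $[\bm{t}_{a-1},\bm{t}_a]$, with $\beta+\gamma<2\pi$ since $m\ge3$. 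The same computation that appears in Lemma~\ref{ref14}(i) gives $\bm{w}_a=(\cot(\beta/2),1,\cot(\beta/2))$ and $\bm{w}_{a-1}=(\cot(\gamma/2),-1,\cot(\gamma/2))$, whence $\angles{\bm{w}_{a-1},\bm{w}_a}=\cot(\gamma/2)\cot(\beta/2)-1-\cot(\gamma/2)\cot(\beta/2)=-1$, and
\[
\bm{w}_{a-1}+\bm{w}_a=\bigl(\cot(\beta/2)+\cot(\gamma/2)\bigr)\,\bm{t}_a=\frac{\sin\bigl((\beta+\gamma)/2\bigr)}{\sin(\beta/2)\sin(\gamma/2)}\,\bm{t}_a,
\]
so $q_a=\cot(\beta/2)+\cot(\gamma/2)>0$ because $\beta/2$, $\gamma/2$, $(\beta+\gamma)/2$ all lie in $(0,\pi)$. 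Combined with $q_a\in\Zbb$ this yields $q_a\in\Zbb\pp$, and the reflection identities of the first paragraph then finish the proof.

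The one point to watch is the arclength bookkeeping: $\bm{t}_{a-1}$, $\bm{t}_a$, $\bm{t}_{a+1}$ must be taken in their genuine counterclockwise cyclic order, so that $\beta$ and $\gamma$ are the two adjacent arcs at $\bm{t}_a$ and $\beta+\gamma$ is a proper arc (strictly less than $2\pi$); this is exactly where $m\ge3$ is used, and it is what makes $q_a$ strictly positive rather than merely nonzero. Everything else is formal substitution.
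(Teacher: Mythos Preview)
Your proof is correct and follows essentially the same route as the paper's: reduce to the single identity $\angles{\bm{w}_{a-1},\bm{w}_a}=-1$, deduce that $\bm{w}_{a-1}+\bm{w}_a$ is isotropic and tangent to the cone at $\bm{t}_a$, hence a multiple of $\bm{t}_a$, and then read off the reflection identities. The only substantive difference is in how you verify $\angles{\bm{w}_{a-1},\bm{w}_a}=-1$ and $q_a>0$: the paper normalizes by an element of $\SO^\uparrow_{2,1}\Rbb$ to put the three vertices at specific points and computes once, then uses a separate parabolic-shrinking argument for positivity; you instead rotate only about the $x_3$-axis, keep $\bm{t}_{a\pm1}$ parametrized by the arclengths $\beta,\gamma$, and extract both the inner product $-1$ and the positivity of the coefficient from the single trigonometric computation. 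Your approach is slightly more economical in that one computation does both jobs.

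One small notational slip worth flagging: after your rotation the primitive pythagorean triple $\bm{t}_a$ lands at $t_{a,3}\,(1,0,1)$, not at $(1,0,1)$ itself, so the displayed coefficient $\cot(\beta/2)+\cot(\gamma/2)$ is $q_a\,t_{a,3}$ rather than $q_a$. This does not affect the argument, since $t_{a,3}>0$ and you only need the sign of $q_a$ here (integrality having been established separately from primitivity of $\bm{t}_a$); but the equality ``$q_a=\cot(\beta/2)+\cot(\gamma/2)$'' as written is off by that positive factor.
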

\begin{proof}
It is easy to show that $\angles{\bm{w}_{a-1},\bm{w}_a}=-1$; for example, applying an appropriate element of $\SO^\uparrow_{2,1}\Rbb$ we may assume $\bm{t}_{a-1}=[0,-1,1]$, 
$\bm{t}_a=[1,0,1]$, $\bm{t}_{a+1}=[0,1,1]$, and compute directly.
As a consequence, $\angles{\bm{w}_{a-1}+\bm{w}_a,\bm{w}_{a-1}+\bm{w}_a}=1-2+1=0$, and $\bm{w}_{a-1}+\bm{w}_a$ lies on the isotropic cone of the Lorentz form.
By the formula~\eqref{eq1}, the plane tangent to this cone at $\bm{t}_a$ contains both $\bm{w}_{a-1}$ and $\bm{w}_a$; hence $\bm{w}_{a-1}+\bm{w}_a$ must be an integer multiple of $\bm{t}_a$.
We thus have $\bm{w}_{a-1}+\bm{w}_a=q_a\bm{t}_a$ for some $q_a\in\Zbb$, and must prove $q_a>0$. Now, we can surely construct a parabolic transformation $\bm{P}\in\SO^\uparrow_{2,1}\Rbb$ that fixes $\bm{t}_a$ and is such that $I_{\bm{P}\bm{w}_{a-1}}$ and 
$I_{\bm{P}\bm{w}_a}$ have both arclength strictly less than $\pi$. By Lemma~\ref{ref14}(v), $\bm{P}\bm{w}_{a-1}$ and $\bm{P}\bm{w}_a$ have both strictly positive third coordinate. Since $\bm{P}\bm{w}_{a-1}+\bm{P}\bm{w}_a=q_a\bm{t}_a$ and $\bm{t}_a$ has positive third coordinate too, $q_a$ must be strictly positive.

For the second statement we observe that $\bm{t}_a$ is a fixed point of $\bm{A}_{a-1}=\bm{R}_{\bm{w}_{a-1}}$, as well as of $\bm{A}_a=\bm{R}_{\bm{w}_a}$. We thus compute $\bm{A}_{a-1}\bm{w}_a=
\bm{A}_{a-1}(-\bm{w}_{a-1}+q_a\bm{t}_a)=\bm{w}_{a-1}+q_a\bm{t}_a$, and analogously for the other identity in~\eqref{eq17}.
\end{proof}

Let $x\in\ooi$ have $B$-symbolic sequence $\bm{a}$. If, for some $t\ge0$, we have $a_t=a_{t+2}$ while $a_{t+1}\in\set{a_t-1,a_t+1}$, then we say that $x$ \newword{moves parabolically} at time $t$.

\begin{proof}[Proof of Theorem~\ref{ref33}]
Let $\mu$ be the infinite measure induced by the density  $\sum_ah_a$
of Theorem~\ref{ref32}(ii). Since $(\ooi,\mu,B)$ is ergodic and conservative, by the Halmos version of the Poincar\'e recurrence theorem the set~$P$ of points that move parabolically at infinitely many times has full $\mu$-measure. As $\sum_ah_a$ is bounded from below by some positive constant, $\mu(P^c)=0$ implies $\lambda(P^c)=0$. In particular, the set $P'$ of points $x$ that move parabolically at infinitely many times, and are such that $\Phi'(x)$ exists finite, has full Lebesgue measure. We claim that $\Phi'(x)=0$ for every $x\in P'$.

Fix such an $x$, and let $\bm{a}$ be its $B$-symbolic sequence.
Then, for each $t\ge0$, $x$ belongs to the cylinder $B_{a_0}\m\cdots B_{a_{t-1}}\m [I_{a_t}]$, whose closure is the $\arg$-image of
$\bm{A}_{a_0}\cdots\bm{A}_{a_{t-1}}[I_{\bm{w}_{a_t}}]$.
To be fully precise we clarify that, according to Definition~\ref{ref23}, $I_a$ is the half-open interval $[\bm{t}_a,\bm{t}_{a+1})$ (or, here, its $\arg$-image), while $I_{\bm{w}_a}$ is, as defined in~\S\ref{ref40}, the closed interval $[\bm{t}_a,\bm{t}_{a+1}]$. However, our fixed $x$ is surely not $B$-terminating, so interval endpoints are of no concern here.

It is easy to show that
\[
\Phi'(x)=\lim_{t\to\infty}
\frac{m\m(m-1)^{-(t+1)}}{\lambda(B_{a_0}\m\cdots B_{a_t}\m [I_{a_{t+1}}])}.
\]
Suppose by contradiction that the above limit is different from $0$. Then,
taking the quotient of two consecutive terms and multiplying by $m-1$, we obtain
\[
\lim_{t\to\infty}
\frac{\lambda(B_{a_0}\m\cdots B_{a_t}\m [I_{a_{t+1}}])}{\lambda(B_{a_0}\m\cdots B_{a_{t+1}}\m [I_{a_{t+2}}])}=m-1.
\]
Up to a factor of $2\pi$, the length of $B_{a_0}\m\cdots B_{a_t}\m [I_{a_{t+1}}]$ equals the arclength of $\bm{A}_{a_0}\cdots\bm{A}_{a_t}[I_{\bm{w}_{a_{t+1}}}]$ which, by Lemma~\ref{ref14}(vii), is asymptotic to the inverse of $(\bm{A}_{a_0}\cdots\bm{A}_{a_t}\bm{w}_{a_{t+1}})_3$, the index $3$ referring to the $3$rd coordinate. Therefore, writing $\bm{A}_{a_0}\cdots\bm{A}_{a_{t-1}}=\bm{C}_{t-1}$ for short, we have
\begin{equation}\label{eq18}
\lim_{t\to\infty}
\frac{(\bm{C}_{t-1}\bm{A}_{a_t}\bm{A}_{a_{t+1}}\bm{w}_{a_{t+2}})_3}{(\bm{C}_{t-1}\bm{A}_{a_t}\bm{w}_{a_{t+1}})_3}=m-1.
\end{equation}
Assume now that $t$ is a parabolic time and write $a_t=a_{t+2}=a$; without loss of generality $a_{t+1}=a-1$. Using Lemma~\ref{ref39} and observing that $\bm{A}_a\bm{t}_a=\bm{t}_a$, we compute
\begin{equation}\label{eq19}
\begin{split}
\frac{(\bm{C}_{t-1}\bm{A}_a\bm{A}_{a-1}\bm{w}_a)_3}{(\bm{C}_{t-1}\bm{A}_a\bm{w}_{a-1})_3}
&=
\frac{(\bm{C}_{t-1}\bm{A}_a(\bm{w}_{a-1}+q_a\bm{t}_a))_3}{(\bm{C}_{t-1}\bm{A}_a\bm{w}_{a-1})_3}\\
&=1+\frac{(\bm{C}_{t-1}\bm{A}_aq_a\bm{t}_a)_3}{(\bm{C}_{t-1}\bm{A}_a\bm{w}_{a-1})_3}\\
&=1+\frac{(\bm{C}_{t-1}q_a\bm{t}_a)_3}{(\bm{C}_{t-1}(\bm{w}_a+q_a\bm{t}_a))_3}\\
&=1+\frac{(\bm{C}_{t-1}q_a\bm{t}_a)_3}{(\bm{C}_{t-1}\bm{w}_a)_3+(\bm{C}_{t-1}q_a\bm{t}_a)_3}.
\end{split}
\end{equation}
Since $(\bm{C}_{t-1}\bm{w}_a)_3$ is eventually positive (actually, it  goes to infinity for $t\to\infty$), the last term in the above chain of equalities is less than $2$ for all sufficiently large parabolic times. If $m\ge4$ this contradicts~\eqref{eq18} and establishes Theorem~\ref{ref33}.

If $m=3$ we need one more parabolic iteration. Namely, we redefine a parabolic time as a time $t$ at which the $B$-symbolic sequence of $x$ has the form either $a(a-1)a(a-1)a$ or $a(a+1)a(a+1)a$. Then the chain of equalities in~\eqref{eq19} starts with
\[
\frac{(\bm{C}_{t-1}\bm{A}_a\bm{A}_{a-1}\bm{A}_a\bm{A}_{a-1}\bm{w}_a)_3}{(\bm{C}_{t-1}\bm{A}_a\bm{A}_{a-1}\bm{A}_a\bm{w}_{a-1})_3},
\]
and ends up with
\[
1+\frac{(\bm{C}_{t-1}q_a\bm{t}_a)_3}{(\bm{C}_{t-1}\bm{w}_a)_3+(\bm{C}_{t-1}3q_a\bm{t}_a)_3},
\]
which is eventually less than $4/3$, again contradicting~\eqref{eq18}.
\end{proof}

In~\S\ref{ref3} we set $\varGamma^\pm_B=\angles{\sm{A}_0,\ldots,\sm{A}_{m-1}}<\PSU^\pm_{1,1}\Zbb[i]$; let us now define $\Gamma^\pm_B=C\m\varGamma^\pm_BC=\angles{A_0,\ldots,A_{m-1}}<\PSL^\pm_2\Zbb$ and $\bm{\Gamma}^\pm_B=
\angles{\bm{A}_0,\ldots,\bm{A}_{m-1}}<\OO^\uparrow_{2,1}\Zbb$; see the diagram~\eqref{eq16}.
Let $A\in\Gamma^\pm_B$; then $A^2$ has positive determinant and is conjugate to a matrix either of the form $\bbmatrix{\exp(t/2)}{}{}{\exp(-t/2)}$ or of the form $\bbmatrix{1}{t}{}{1}$ ($\Gamma_B$ does not contain elliptic elements). The formulas in~\eqref{eq2} show immediately that the spectral radius $\rho(\bm{A}^2)$ of $\bm{A}^2$ is the square of the spectral radius of $A^2$; taking square roots we obtain $\rho(\bm{A})=\rho(A)^2$.

We fix a lifting ---whose choice is irrelevant--- of $A_0,\ldots,A_{m-1}$ to $\SL^\pm_2\Zbb$, and we denote by $\Sigma^k$ (respectively, $\bm{\Sigma}^k$) the set of all products of $k$ elements of $\Sigma=\Sigma^1=\set{A_0,\ldots,A_{m-1}}$ (respectively, $\set{\bm{A}_0,\ldots,\bm{A}_{m-1}}$), repetitions allowed. We recall that the \newword{joint spectral radius} of $\Sigma$ is the number
\[
\rho(\Sigma)=\lim_{k\to\infty}
\bigl(\max\set{\norm{A}^{1/k}:A\in\Sigma^k}\bigr),
\]
where $\norm{\phantom{A}}$ is the operator norm induced by some vector norm, whose choice is irrelevant; see~\cite{bergerwang92}, \cite{elsner95}, \cite{guglielmizennaro14} for a detailed treatment. By the Berger-Wang theorem
\[
\rho(\Sigma)=\limsup_{k\to\infty}
\bigl(\max\set{\rho(A)^{1/k}:A\in\Sigma^k}\bigr),
\]
and the previous remarks imply that $\rho(\bm{\Sigma})=\rho(\Sigma)^2$.

The \newword{finiteness conjecture}~\cite[p.~19]{lagariaswang95} states the following:
\begin{itemize}
\item For every finite set of matrices $\Pi$ there exists $k\ge1$ and $A\in\Pi^k$ such that $\rho(\Pi)=\rho(A)^{1/k}$.
\end{itemize}
Although the conjecture has been refuted in~\cite{bouschmairesse02}, counterexamples are difficult to construct, and are widely believed to be rare; see~\cite{jenkinsonpollicott18} for a detailed discussion and references to the literature. We do not know if the sets $\Sigma=\set{A_0,\ldots,A_{m-1}}$ defining our billiard maps always satisfy the conjecture. However, for any specific example we examined it was easy to guess an appropriate $k$ and $A\in\Sigma^k$, and the guess was proved correct by explicitly constructing an appropriate matrix norm; see Example~\ref{ref45}.

\begin{definition}
Let\label{ref41} $(\sigma,\rho)\in\Scal_B$, and let $\gamma:\Rbb\to\Dcal$ be the geodesic path of ideal endpoints $\gamma(-\infty)=\rho$ and $\gamma(+\infty)=\sigma$, parametrized by arclength, and entering the table $D$ at $t=0$. Then $\gamma$ descends to a billiard trajectory
$\bar\gamma:\Rbb\to D=\varGamma^\pm_B\backslash\Dcal$, and we define the
\newword{mean free path} of $\bar\gamma$ to be
\[
\mfp(\bar\gamma)=\lim_{t\to\infty}
\frac{t}{\text{number of bounces between time $0$ and time $t$}},
\]
provided that the limit exists (it surely does if $\bar\gamma$ is periodic).
\end{definition}

\begin{theorem}
For\label{ref42} $\tilde\mu$-every $(\sigma,\rho)$, the mean free path of $\bar\gamma$ equals $0$. The supremum of the family of mean free paths of periodic trajectories equals $2\log(\rho(\Sigma))$, and this supremum is a maximum if and only if the finiteness conjecture holds for $\Sigma$.
\end{theorem}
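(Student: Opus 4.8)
The plan is to dispatch the two assertions by quite different means, and I expect all the real difficulty to lie in the first one.

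\emph{Vanishing of the mean free path $\tilde\mu$-almost everywhere.} I would express $\mfp(\bar\gamma)$ as a Birkhoff average for the invertible system $(\Scal_B,\tilde\mu,\widetilde B)$. Let $\tau\colon\Scal_B\to\Rbb\pp$ send $(\sigma,\rho)$ to the hyperbolic length of the arc cut out of $D$ by the geodesic from $\rho$ to $\sigma$, between the entrance point and the first bounce; since the $t$-th bounce of $\bar\gamma$ then occurs at time $\sum_{k=0}^{t-1}\tau\bigl(\widetilde B^k(\sigma,\rho)\bigr)$, one has $\mfp(\bar\gamma)=\lim_{t\to\infty}t\m\sum_{k=0}^{t-1}\tau\circ\widetilde B^k$ whenever that limit exists. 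The suspension flow of $(\Scal_B,\tilde\mu,\widetilde B)$ under the roof $\tau$ is the billiard flow on the unit tangent bundle of $D$, whose invariant Liouville measure has total mass proportional to the hyperbolic area of $D$; the latter is $\pi(m-2)$ by Gauss--Bonnet, so $\int_{\Scal_B}\tau\ud\tilde\mu<\infty$, i.e.\ $\tau\in L^1(\tilde\mu)$. By Theorems~\ref{ref26} and~\ref{ref32}(iii) the system $(\Scal_B,\tilde\mu,\widetilde B)$ is conservative, ergodic and of infinite total mass, and in this regime Birkhoff averages of $L^1$ functions tend to $0$ almost everywhere; for completeness I would recall the short argument: choosing $M\subset\Scal_B$ with $0<\tilde\mu(M)<\infty$, the generalised Kac formula gives that the first return time to $M$ has infinite integral, so the ergodic theorem for the induced map forces the number of returns to $M$ up to time $t$ to be $o(t)$ a.e., and Hopf's ratio ergodic theorem applied to $\tau$ and $\mathbf 1_M$ then yields $t\m\sum_{k<t}\tau\circ\widetilde B^k\to0$ a.e. This proves the first assertion.

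\emph{The supremum over periodic trajectories equals $2\log\rho(\Sigma)$.} By Lemma~\ref{ref24} and Theorem~\ref{ref30} a periodic trajectory of combinatorial period $p$ has a cyclically reduced $B$-symbolic sequence $\overline{a_0\cdots a_{p-1}}$ (no two cyclically adjacent symbols equal), and its closed geodesic is the axis of $A_{a_0}\cdots A_{a_{p-1}}\in\Sigma^p$, which is hyperbolic or a glide reflection; in either case its translation length, hence the length of the closed geodesic, equals $2\log\rho(A_{a_0}\cdots A_{a_{p-1}})$, so $\mfp(\bar\gamma)=2p\m\log\rho(A_{a_0}\cdots A_{a_{p-1}})$. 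Since a cyclically reduced power of some $A_aA_{a+1}$ is parabolic (its symbolic sequence being excluded by Lemma~\ref{ref24}(ii)) and contributes $0$, the supremum of the mean free paths of periodic trajectories equals $2\sup\set{p\m\log\rho(A):p\ge1,\ A\in\Sigma^p\text{ cyclically reduced}}$. Now for $A\in\Sigma^k$ and every $n$ one has $\rho(A)^{1/k}=\rho(A^n)^{1/(nk)}\le\max\set{\rho(A')^{1/(nk)}:A'\in\Sigma^{nk}}$, and the sequence on the right is a subsequence of the one in the Berger--Wang formula, so its limit superior is at most $\rho(\Sigma)$; hence $\rho(A)^{1/k}\le\rho(\Sigma)$. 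Together with Berger--Wang this gives $\rho(\Sigma)=\sup_{k\ge1}\max\set{\rho(A)^{1/k}:A\in\Sigma^k}$. Finally, cyclically reducing a word in the free product $\varGamma^\pm_B$ leaves $\rho$ unchanged (the reduction is conjugate to the original, up to a sign in the $\SL^\pm_2$ lift) and does not increase its length, so restricting the supremum to cyclically reduced words does not change it; and since $D$ has finite positive area $\varGamma^\pm_B$ is non-elementary, so $\rho(\Sigma)>1$ and the words nearly attaining the supremum are non-parabolic of length $\ge2$, i.e.\ genuine periodic trajectories. Hence the supremum equals $2\log\rho(\Sigma)$.

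\emph{It is a maximum iff the finiteness conjecture holds.} If the conjecture holds, take $A\in\Sigma^k$ with $\rho(A)^{1/k}=\rho(\Sigma)$ and cyclically reduce it to $A'\in\Sigma^p$ with $p\le k$ and $\rho(A')=\rho(A)$; then $p\m\log\rho(A')\ge k\m\log\rho(A)=\log\rho(\Sigma)$, so equality holds and, $\rho(\Sigma)$ exceeding $1$, the word $A'$ is non-parabolic of length $\ge2$, giving a periodic trajectory of mean free path $2\log\rho(\Sigma)$. Conversely a periodic trajectory of maximal mean free path yields, via the formula above, a word $A\in\Sigma^p$ with $\rho(A)^{1/p}=\rho(\Sigma)$, which is precisely the finiteness conjecture for $\Sigma$. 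The only genuinely delicate point is thus the first assertion --- spelling out the identification $\int\tau\ud\tilde\mu<\infty$ and the a.e.\ vanishing of Birkhoff averages of integrable functions in the conservative, ergodic, infinite-measure setting; the second assertion is soft bookkeeping with the joint spectral radius on top of the dictionary between cyclically reduced words and periodic trajectories set up in §\ref{ref6}.
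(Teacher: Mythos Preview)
Your proof is correct and follows essentially the same approach as the paper's: the first assertion via integrability of the free-path roof function (finite Liouville volume) combined with the vanishing of Birkhoff averages of $L^1$ functions in conservative infinite-measure systems, and the second via the dictionary between periodic trajectories and cyclically reduced words together with the Berger--Wang formula. Your write-up is in fact more explicit than the paper's in two places---you spell out the Hopf ratio ergodic theorem argument that the paper defers to a reference, and you treat the ``maximum iff finiteness conjecture'' equivalence separately, whereas the paper leaves it implicit in the identification $M=2\log\rho(\Sigma)$.
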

\begin{proof}
Let $f:\Scal_B\to\Rbb\pp$ be defined by $f(\sigma,\rho)=\sup\set{t>0:\gamma(t)\in D}$, where~$\gamma$ depends on $(\sigma,\rho)$ as in Definition~\ref{ref41}. Then the integral of $f$ with respect to $\tilde\mu$ is finite, since it equals one half of the volume of the unit tangent bundle of $\varGamma_B\backslash\Dcal$. Since the measure-preserving system $(\Scal_B,\tilde\mu,\widetilde B)$ is conservative, a basic result of infinite ergodic theory~\cite[\S4]{isola11} yields that for $\tilde\mu$-every $(\sigma,\rho)$ we have
\[
\lim_{n\to\infty}\frac{1}{n}
\sum_{k=0}^{n-1}f\bigl(\widetilde B^k(\sigma,\rho)\bigr)=0.
\]
As the limit above is precisely the free mean path of $\bar\gamma$, our first statement follows.

Let $M=\sup\set{\mfp(\bar\gamma):\text{$\bar\gamma$ is a periodic billiard trajectory}}$. Given $k\ge3$, let $A$ have maximum spectral radius in $\Sigma^k$. Surely $A^2$ cannot be parabolic and, by the unique factorization of $A$ as a product of elements in $\Sigma$, we see that there exists $B=A_{b_0}\cdots A_{b_{h-1}}\in\Sigma^h$ such that $2\le h\le k$, $b_0\not=b_{h-1}$, and $A$ is conjugate to $B$.
Define
$\gamma:\Rbb\to\Dcal$ by $\gamma(t)=CB*\exp(ti)$, where $C$ is the Cayley matrix.
Then $\gamma$ descends to a $h$-bounces periodic billiard trajectory $\bar\gamma$ on $D$, which we claim to have length $2\log(\rho(B))$. Indeed, if $h$ is even then $B$ is hyperbolic; thus, by the proof
of~\cite[Proposition~1]{castle_et_al11}, $\bar\gamma$ has length $2\arccosh(\abs{\tr B}/2)$, which is indeed $2\log(\rho(B))$.
If $h$ is odd, then we replace $B$ with $B^2$ and obtain that $\bar\gamma$ has length $\log(\rho(B^2))$, which again equals $2\log(\rho(B))$. As $\bar\gamma$ involves $h$ bounces, we have $\mfp(\bar\gamma)=2\log(\rho(B)^{1/h})$;
we conclude that $2\log(\rho(A)^{1/k})\le
2\log(\rho(B)^{1/h})=\mfp(\bar\gamma)$, and thus $2\log(\rho(\Sigma))\le M$.

Conversely, any periodic trajectory $\bar\gamma$ involving $k$ bounces can be lifted (nonuniquely) to a
unit speed geodesic path $\gamma:\Rbb\to\Dcal$. The $B$-symbolic sequence~$\bm{a}$ of $\gamma(+\infty)=\sigma\in S^1$
is periodic of period $k$ and the argument above, applied to 
$A=A_{a_0}\cdots A_{a_{k-1}}$, shows that $\bar\gamma$ has mean free path $2\log(\rho(A)^{1/k})$; therefore $M\le2\log(\rho(\Sigma))$.
\end{proof}

\begin{theorem}
The\label{ref43} function $\Phi$ is H\"older continuous of exponent
\[
\alpha=\frac{\log(m-1)}{2\,\log(\rho(\Sigma))}.
\]
If the finiteness conjecture holds for $\Sigma$, then $\alpha$ is the best H\"older exponent (i.e., $\Phi$ is not H\"older continuous of exponent $\beta$, for any $\beta>\alpha$).
\end{theorem}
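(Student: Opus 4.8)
The plan is to read off the H\"older exponent from a comparison of the lengths of $B$-cylinders and of $T$-cylinders. For an admissible word $w=a_0a_1\cdots a_{n-1}$ (one satisfying (i)--(ii) of Lemma~\ref{ref24}) write $I^B_w$ and $I^T_w$ for the corresponding cylinders, i.e.\ the sets of points whose $B$- resp.\ $T$-symbolic sequence begins with $w$; by construction $\Phi[I^B_w]=I^T_w$. Since $T$ acts on $\ooi$ by $x\mapsto-(m-1)x\bmod 1$, each inverse branch of $T$ is affine of slope $-1/(m-1)$, so $\arclength(I^T_w)=(m-1)^{-(n-1)}$ times the (common) length of a basic arc, hence $\asymp(m-1)^{-n}$. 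On the other hand, by Lemma~\ref{ref14}(v),(vii) one has $\arclength(I^B_w)\asymp\min\bigl(1,(\bm A_{a_0}\cdots\bm A_{a_{n-2}}\bm w_{a_{n-1}})_3^{-1}\bigr)$. The set $\bm\Sigma=\set{\bm A_0,\ldots,\bm A_{m-1}}$ generates a finite-index subgroup of $\OO^\uparrow_{2,1}\Zbb$ and is therefore irreducible, so it admits an extremal norm $\norm\argomento$, one with $\norm{\bm A\bm v}\le\rho(\bm\Sigma)\norm{\bm v}$ for all $\bm A\in\bm\Sigma$; consequently $(\bm A_{a_0}\cdots\bm A_{a_{n-2}}\bm w_{a_{n-1}})_3\le\rho(\bm\Sigma)^{\,n}\max_a\norm{\bm w_a}$, whence $\arclength(I^B_w)\gtrsim\rho(\bm\Sigma)^{-n}$. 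Recalling $\rho(\bm\Sigma)=\rho(\Sigma)^2$ and that $\alpha=\log(m-1)/(2\log\rho(\Sigma))$ is exactly the exponent making $\rho(\bm\Sigma)^{\alpha}=m-1$, we get
\[
\arclength(\Phi[I^B_w])=\arclength(I^T_w)\asymp(m-1)^{-n}=\bigl(\rho(\bm\Sigma)^{-n}\bigr)^{\alpha}\lesssim\arclength(I^B_w)^{\alpha}
\]
for every admissible $w$.

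Next I would promote this uniform cylinder estimate to the pointwise inequality $\abs{\Phi(x)-\Phi(y)}\le C'\abs{x-y}^{\alpha}$. Fix $x<y$ and let $n$ be the largest depth of a $B$-cylinder $Z$ containing both; then $x$ and $y$ lie in distinct depth-$(n+1)$ subcylinders of $Z$ and $\abs{\Phi(x)-\Phi(y)}\le\arclength(\Phi[Z])\asymp(m-1)^{-n}=(\rho(\bm\Sigma)^{-n})^{\alpha}$, so it is enough to show $\abs{x-y}\gtrsim\rho(\bm\Sigma)^{-n}$ in those situations where the bound is actually required. If the two subcylinders are not adjacent, an intervening subcylinder of length $\gtrsim\rho(\bm\Sigma)^{-n-1}$ lies inside $[x,y]$; more generally, descending finitely far along the common boundary point $p$ of the two subcylinders, either the cylinder of $x$ (or of $y$) detaches from $p$ after a bounded number of steps — again producing a subcylinder of controlled relative size between $x$ and $p$, so $\abs{x-y}\ge\abs{x-p}\gtrsim\rho(\bm\Sigma)^{-n}$ — or $x$ and $y$ are both extraordinarily close to $p$. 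In the latter case the symbolic description of points near a cylinder endpoint (Lemma~\ref{ref24}, using that the tails $\overline{a(a+1)}$ are forbidden) forces $x$ and $y$ to move parabolically, in the sense of \S\ref{ref38}, for a long initial run of some length $j$; along such a run the $B$-cylinder of $x$ shrinks only like $1/j$ (a parabolic matrix has linear growth) while its $\Phi$-image shrinks like $(m-1)^{-j}$, so $\abs{\Phi(x)-\Phi(p)}\lesssim(m-1)^{-n-j}\lesssim j^{-\alpha}\rho(\bm\Sigma)^{-\alpha n}\lesssim\abs{x-p}^{\alpha}\le\abs{x-y}^{\alpha}$ because $(m-1)^{-j}j^{\alpha}$ is bounded; together with the twin estimate for $\abs{\Phi(p)-\Phi(y)}$ this closes the H\"older bound. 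I expect \emph{this last case division to be the main obstacle}: one must make sure that the parabolic cusps, at which $\Phi$ is in fact super-polynomially flat, never degrade the exponent $\alpha$ produced by the hyperbolically growing cylinders, and that the finitely many admissible-transition constraints keep every auxiliary constant uniform.

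For the sharpness statement, suppose the finiteness conjecture holds for $\Sigma$: choose $k\ge1$ and $A=A_{b_0}\cdots A_{b_{k-1}}\in\Sigma^k$ with $\rho(A)^{1/k}=\rho(\Sigma)$ and, exactly as in the proof of Theorem~\ref{ref42}, with $b_0\ne b_{k-1}$ and $\bm A=\bm A_{b_0}\cdots\bm A_{b_{k-1}}$ hyperbolic, so that the bi-infinite word $\overline{b_0\cdots b_{k-1}}$ is admissible. Put $Z_n=I^B_{(b_0\cdots b_{k-1})^n}$ and let $x_n<y_n$ be the two endpoints of this (nondegenerate) arc. Since $\bm A$ is hyperbolic, $\arclength(Z_n)\asymp\rho(\bm A)^{-n}=\rho(\bm\Sigma)^{-nk}$, and since $\Phi$ maps endpoints to endpoints, $\abs{\Phi(x_n)-\Phi(y_n)}=\arclength(I^T_{(b_0\cdots b_{k-1})^n})\asymp(m-1)^{-nk}$. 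Hence for every $\beta>\alpha$,
\[
\frac{\abs{\Phi(x_n)-\Phi(y_n)}}{\abs{x_n-y_n}^{\beta}}\asymp\Bigl(\frac{\rho(\bm\Sigma)^{\beta}}{m-1}\Bigr)^{nk}\xrightarrow[\,n\to\infty\,]{}\infty,
\]
because $\rho(\bm\Sigma)^{\beta}>\rho(\bm\Sigma)^{\alpha}=m-1$; so $\Phi$ is not H\"older continuous of exponent $\beta$, and $\alpha$ is optimal.
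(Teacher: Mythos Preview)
Your strategy---compare $B$- and $T$-cylinder lengths, then promote to a pointwise H\"older bound---is the same as the paper's, and your sharpness argument is essentially identical to the one there. The use of an extremal (Barabanov) norm in place of the paper's $\varepsilon$-approximation to $\rho(\bm\Sigma)$ is a legitimate and slightly cleaner alternative; irreducibility of $\bm\Sigma$ is clear since the generated group is Zariski-dense in $\OO_{2,1}$.

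The substantive difference is in the promotion step, and here your argument is both more complicated and, as written, not correct. You take the \emph{largest} $n$ with $x,y$ in the same depth-$n$ cylinder and then try to show $\abs{x-y}\gtrsim\rho(\bm\Sigma)^{-n}$, handling the adjacent-subcylinder case via the common endpoint $p$. But the claim ``the cylinder of $x$ detaches from $p$ after a bounded number of steps \ldots\ so $\abs{x-p}\gtrsim\rho(\bm\Sigma)^{-n}$'' is false: the detachment depth $k_x$ can exceed $n$ by any amount, and what you actually get is $\abs{x-p}\gtrsim\rho(\bm\Sigma)^{-k_x}$. Once you use $k_x$ instead of $n$, the general cylinder lower bound already yields $\abs{\Phi(x)-\Phi(p)}\lesssim(m-1)^{-(k_x-1)}\lesssim\abs{x-p}^\alpha$ directly, and the whole parabolic case analysis becomes unnecessary---so your dichotomy ``bounded detachment vs.\ long parabolic run'' is a red herring. (Your parabolic estimate is correct, just superfluous.)

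The paper avoids this entirely by flipping the quantifier: it takes the \emph{smallest} $k$ for which $[x,x']$ \emph{contains} a full level-$k$ cylinder. Then $\abs{x-x'}$ is bounded below by that cylinder's length, and---this is the clean step you are missing---since $[x,x']$ contains no level-$(k-1)$ cylinder, it can meet at most two adjacent level-$(k-1)$ cylinders and hence at most $2m-3$ level-$k$ endpoints, giving $\abs{\Phi x'-\Phi x}\lesssim(m-1)^{-k}$ with no case analysis. Combining and letting $\varepsilon\to0$ finishes. Your approach can be salvaged along the lines above, but the paper's endpoint-counting device is both shorter and sidesteps the obstacle you correctly flagged.
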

\begin{proof}
Let $\norm{\bm{x}}=\max\set{\abs{x_1},\abs{x_2},\abs{x_3}}$ denote the $\infty$-norm in $\Rbb^3$; note that $\norm{\bm{x}}=\abs{x_3}$ on $\Scal\cap\Zbb^3$, exception being made for the four points $(\pm1,0,0)$, $(0,\pm1,0)$ only. As noted in the proof of Theorem~\ref{ref33}, the closure of the cylinder $B_{a_0}\m\cdots B_{a_{k-1}}\m [I_{a_k}]$ is the $\arg$-image of $\bm{A}_{a_0}\cdots\bm{A}_{a_{k-1}}[I_{\bm{w}_{a_k}}]$.
Taking into account Lemma~\ref{ref14}(iii) and (vii), the length of the former is asymptotic, as $k$ increases, to $\pi\m\norm{\bm{A}_{a_0}\cdots\bm{A}_{a_{k-1}}\bm{w}_{a_t}}\m$.
Once fixed a constant $C>\pi\max\set{\norm{\bm{w}_{a_0}},\ldots,\norm{\bm{w}_{a_{m-1}}}}>1$,
this implies that there exists a level $k_0$ such that, for every $k\ge k_0$ and every cylinder $B_{a_0}\m\cdots B_{a_{k-1}}\m [I_{a_k}]$ of level $k$, we have
\[
C\m\norm{\bm{A}_{a_0}\cdots\bm{A}_{a_{k-1}}}\m
<\lambda(B_{a_0}\m\cdots B_{a_{k-1}}\m [I_{a_k}])
<1/2,
\]
where the matrix norm is the one induced by the vector norm.

Fix now $\varepsilon>0$. Then there exists $k_1\ge k_0$ such that, for every $k\ge k_1$ and every matrix $\bm{A}_{a_0}\cdots\bm{A}_{a_{k-1}}\in\bm{\Sigma}^k$, we have $\rho(\bm{\Sigma})+\varepsilon>
\norm{\bm{A}_{a_0}\cdots\bm{A}_{a_{k-1}}}^{1/k}$. Let $0\le x<x'<1$ be such that
\[
x'-x\le l_1=\min\set{l:\text{$l$ is the length of a cylinder of level $k_1$}}.
\]
Let $k\ge k_1$ be minimum such that the interval $[x,x']$ contains a cylinder $B_{a_0}\m\cdots B_{a_{k-1}}\m [I_{a_k}]$ of level $k$; then we have
\[
x'-x>C\m\norm{\bm{A}_{a_0}\cdots\bm{A}_{a_{k-1}}}\m
>C\m(\rho(\bm{\Sigma})+\varepsilon)^{-k},
\]
which implies
\begin{equation}\label{eq20}
k>-\frac{\log(C)}{\log(\rho(\bm{\Sigma})+\varepsilon)}
-\frac{\log(x'-x)}{\log(\rho(\bm{\Sigma})+\varepsilon)}.
\end{equation}

On the other hand, the interval $[x,x']$ may contain at most $1+(m-2)+(m-2)=2m-3$ endpoints of cylinders of level $k$; therefore
\[
\Phi x'-\Phi x <(2m-2)m\m(m-1)^{-k},
\]
which implies
\begin{equation}\label{eq21}
k<\frac{\log(2m\m(m-1))}{\log(m-1)}
-\frac{\log(\Phi x' -\Phi x)}{\log(m-1)}.
\end{equation}
Eliminating $k$ from \eqref{eq20} and \eqref{eq21} and rearranging terms, we obtain
\[
\frac{\log(\Phi x'-\Phi x)}{\log(m-1)}<
\frac{\log(C)}{\log(\rho(\bm{\Sigma})+\varepsilon)}+
\frac{\log(2m\m(m-1))}{\log(m-1)}+
\frac{\log(x'-x)}{\log(\rho(\bm{\Sigma})+\varepsilon)},
\]
whence
\begin{align*}
\log(\Phi x'-\Phi x)&<
\frac{\log(m-1)\log(C)}{\log(\rho(\bm{\Sigma})+\varepsilon)}+
\log(2m\m(m-1))+
\frac{\log(m-1)}{\log(\rho(\bm{\Sigma})+\varepsilon)}\log(x'-x)\\
&<
\log(E)+\frac{\log(m-1)}{\log(\rho(\bm{\Sigma})+\varepsilon)}\log(x'-x),
\end{align*}
where
\[
E=\exp\biggl(
\frac{\log(m-1)\log(C)}{\log(\rho(\bm{\Sigma}))}+
\log(2m\m(m-1))
\biggr).
\]
We thus obtained
\[
\Phi x'-\Phi x<E(x'-x)^{\log(m-1)/\log(\rho(\bm{\Sigma})+\varepsilon)}.
\]
Since $E$ does not depend on $\varepsilon$, we let $\varepsilon$ tend to $0$ and obtain the H\"older condition $\Phi x'-\Phi x\le E(x'-x)^{\alpha}$, valid for $x'-x\le l_1$
(remember that $\rho(\bm{\Sigma})=\rho(\Sigma)^2$). Replacing $E$ with $\max\set{E,l_1^{-\alpha}}$, the condition holds for every pair $x<x'$.

Assume now that the finiteness conjecture holds for $\bm{\Sigma}$, 
and let $\bm{A}=\bm{A}_{a_0}\cdots\bm{A}_{a_{k-1}}\in\bm{\Sigma}^k$
be a maximizing matrix (i.e., $\rho(\bm{\Sigma})=\rho(\bm{A})^{1/k}$).
We must have $a_0\not=a_{k-1}$, since otherwise $\bm{A}$ would be conjugate to a matrix $\bm{B}$ in $\bm{\Sigma}^{k-2}$ and we would have $\rho(\bm{B})^{1/(k-2)}>\rho(\bm{A})^{1/k}=\rho(\bm{\Sigma})$, which is impossible.
The eigenvalues of $\bm{A}$ are $(-1)^k$, $\rho(\bm{A})$, and $\rho(\bm{A})\m$; let $\bm{v}_1,\bm{v}_2,\bm{v}_3$ be the corresponding eigenvectors. The vector $\bm{w}_{a_0}$ cannot lie in the subspace spanned by $\bm{v}_1$ and $\bm{v}_3$, because $\norm{\bm{A}^n\bm{w}_{a_0}}\to\infty$ for $n\to\infty$. This easily implies that the length of the cylinder $(B_{a_0}\m\cdots B_{a_{k-1}}\m)^n[I_{a_0}]$, of level $kn$ and endpoints $x_n<x'_n$, is asymptotic to $C\rho(\bm{A})^{-n}$ as $n\to\infty$, for some constant $C$. But then, for any $\varepsilon>0$,
\[
\lim_{n\to\infty}
\frac{\Phi x'_n-\Phi x_n}{(x'_n-x_n)^{\alpha+\varepsilon}}=
\lim_{n\to\infty}
\frac{m\m(m-1)^{-kn}}{C^{\alpha+\varepsilon}\rho(\bm{A})^{-(\alpha+\varepsilon)n}}=\infty,
\]
because $\rho(\bm{\Sigma})^{\alpha+\varepsilon}>m-1$ implies
$\rho(\bm{A})^{\alpha+\varepsilon}>(m-1)^k$, and thus
$(m-1)^{-k}/\rho(\bm{A})^{-(\alpha+\varepsilon)}>1$.
\end{proof}

\begin{example}
Consider\label{ref45} the square billiard table of Example~\ref{ref46}. By the symmetries of the table, the graph of the induced Minkowski function $\Phi$ in Figure~\ref{fig7} (right) results from the gluing of four identical pieces, the fourth piece corresponding to the interval $[-i,1]$ in $S^1$. Since the foldings $\bm{F},\bm{J}\bm{F},\bm{J}$ involved in the construction of the Romik map in~\S\ref{ref9} are isometries, it is not difficult to realize that this fourth piece is conjugate via stereographic projection from $[0,1,1]$ to the Minkowski function $Q_E$ introduced in~\cite{bocalinden18} for the Romik map. As the above stereographic projection is a Lipschitz bijection with Lipschitz inverse
between $[-i,1]$ and $[0,1]$, the H\"older exponents of $\Phi$ and of $Q_E$ must agree.

The set $\Sigma$ contains the four matrices
\[
A_0=\begin{pmatrix}
-1 & 2 \\
 & 1
 \end{pmatrix},\quad
A_1=\begin{pmatrix}
1 & 2 \\
 & -1
 \end{pmatrix},\quad
A_2=\begin{pmatrix}
1 & \\
-2 & -1
 \end{pmatrix},\quad
A_3=\begin{pmatrix}
-1 & \\
-2 & 1
 \end{pmatrix}.
\]
By looking at our square billiard table, we obviously conjecture that the maximum periodic mean free path should be realized by
bouncing between two opposite walls; in other words, that the finiteness conjecture should hold for $\Sigma$, with witnessing matrix $A_3A_1\in\Sigma^2$ (or its conjugate $A_2A_0$).

Denote by $\norm{\phantom{A}}_2$ the spectral norm on $2\times2$ real matrices induced by the euclidean norm on $\Rbb^2$. Then, as it is well known, $\norm{A}_2=\rho(A^\top A)^{1/2}$, and one checks immediately that $\norm{A_a}_2=\sqrt{3+\sqrt{8}}$ for every $a\in\set{0,1,2,3}$. 
Since $\rho(A_3A_1)^{1/2}\le\rho(\Sigma)\le
\max\set{\norm{A}_2:A\in\Sigma^1}$, and 
$\rho(A_3A_1)^{1/2}$ equals $\sqrt{3+\sqrt{8}}=1+\sqrt{2}$ as well, our conjecture is confirmed.
Theorem~\ref{ref42} now yields that $\Phi$, and thus $Q_E$, has H\"older best exponent $\log(3)/(2\log(1+\sqrt{2}))$, in agreement with~\cite[Theorem~2]{bocalinden18}.
\end{example}


\begin{thebibliography}{10}

\bibitem{aaronson97}
J.~Aaronson.
\newblock {\em An introduction to infinite ergodic theory}, volume~50 of {\em
  Mathematical Surveys and Monographs}.
\newblock American Mathematical Society, Providence, RI, 1997.

\bibitem{aaronsondenker99}
J.~Aaronson and M.~Denker.
\newblock The {P}oincar\'{e} series of {$\mathbf{C}\backslash\mathbf{Z}$}.
\newblock {\em Ergodic Theory Dynam. Systems}, 19(1):1--20, 1999.

\bibitem{alperin05}
R.~C. Alperin.
\newblock The modular tree of {P}ythagoras.
\newblock {\em Amer. Math. Monthly}, 112(9):807--816, 2005.

\bibitem{barning63}
F.~J.~M. Barning.
\newblock On {P}ythagorean and quasi-{P}ythagorean triangles and a generation
  process with the help of unimodular matrices.
\newblock {\em Math. Centrum Amsterdam Afd. Zuivere Wisk.}, 1963(ZW-011):37,
  1963.

\bibitem{bergerwang92}
M.~A. Berger and Y.~Wang.
\newblock Bounded semigroups of matrices.
\newblock {\em Linear Algebra Appl.}, 166:21--27, 1992.

\bibitem{berggren34}
B.~{Berggren}.
\newblock {Pytagoreiska trianglar.}
\newblock {\em {Tidskr. Element\"ar Mat. Fys. Kemi}}, 17:129--139, 1934.

\bibitem{bocalinden18}
F.~P. Boca and C.~Linden.
\newblock On {M}inkowski type question mark functions associated with even or
  odd continued fractions.
\newblock {\em Monatsh. Math.}, 187(1):35--57, 2018.

\bibitem{borevichshafarevich66}
A.~I. Borevich and I.~R. Shafarevich.
\newblock {\em Number theory}.
\newblock Academic Press, 1966.

\bibitem{bouschmairesse02}
T.~Bousch and J.~Mairesse.
\newblock Asymptotic height optimization for topical {IFS}, {T}etris heaps, and
  the finiteness conjecture.
\newblock {\em J. Amer. Math. Soc.}, 15(1):77--111, 2002.

\bibitem{cannon_et_al97}
J.~W. Cannon, W.~J. Floyd, R.~Kenyon, and W.~R. Parry.
\newblock Hyperbolic geometry.
\newblock In {\em Flavors of geometry}, volume~31 of {\em Math. Sci. Res. Inst.
  Publ.}, pages 59--115. Cambridge Univ. Press, 1997.

\bibitem{cassarpaia90}
D.~Cass and P.~J. Arpaia.
\newblock Matrix generation of {P}ythagorean {$n$}-tuples.
\newblock {\em Proc. Amer. Math. Soc.}, 109(1):1--7, 1990.

\bibitem{castle_et_al11}
S.~Castle, N.~Peyerimhoff, and K.~F. Siburg.
\newblock Billiards in ideal hyperbolic polygons.
\newblock {\em Discrete Contin. Dyn. Syst.}, 29(3):893--908, 2011.

\bibitem{chakim19b}
B.~Cha and D.~H. Kim.
\newblock {L}agrange spectrum of {R}omik's dynamical system.
\newblock \texttt{https://arxiv.org/abs/1903.02882}, 2019.

\bibitem{chakim19a}
B.~Cha and D.~H. Kim.
\newblock Number theoretical properties of {R}omik's dynamical system.
\newblock \texttt{https://arxiv.org/abs/1902.03571}, 2019.

\bibitem{cha_et_al18}
B.~Cha, E.~Nguyen, and B.~Tauber.
\newblock Quadratic forms and their {B}erggren trees.
\newblock {\em J. Number Theory}, 185:218--256, 2018.

\bibitem{chernovmar01}
N.~Chernov and R.~Markarian.
\newblock {\em Introduction to the ergodic theory of chaotic billiards}.
\newblock IMPA Mathematical Publications. Instituto de Matem\'{a}tica Pura e
  Aplicada, Rio de Janeiro, second edition, 2003.

\bibitem{conradpd}
K.~Conrad.
\newblock Pythagorean descent.
\newblock
  \url{http://www.math.uconn.edu/~kconrad/blurbs/linmultialg/descentPythag.pdf}.

\bibitem{CornfeldFomSi82}
I.~P. Cornfeld, S.~V. Fomin, and Ya.~G. Sina\u{\i}.
\newblock {\em Ergodic theory}.
\newblock Springer, 1982.

\bibitem{denjoy38}
A.~Denjoy.
\newblock Sur une fonction r{\'e}elle de {M}inkowski.
\newblock {\em J. Math. Pures Appl.}, 17:105--151, 1938.

\bibitem{eckert84}
E.~J. Eckert.
\newblock The group of primitive {P}ythagorean triangles.
\newblock {\em Math. Mag.}, 57(1):22--27, 1984.

\bibitem{elsner95}
L.~Elsner.
\newblock The generalized spectral-radius theorem: an analytic-geometric proof.
\newblock {\em Linear Algebra Appl.}, 220:151--159, 1995.

\bibitem{fried96}
D.~Fried.
\newblock Symbolic dynamics for triangle groups.
\newblock {\em Invent. Math.}, 125(3):487--521, 1996.

\bibitem{guglielmizennaro14}
N.~Guglielmi and M.~Zennaro.
\newblock Stability of linear problems: joint spectral radius of sets of
  matrices.
\newblock In {\em Current challenges in stability issues for numerical
  differential equations}, volume 2082 of {\em Lecture Notes in Math.}, pages
  265--313. Springer, 2014.

\bibitem{hardywri85}
G.~H. Hardy and E.~M. Wright.
\newblock {\em An introduction to the theory of numbers}.
\newblock Oxford University Press, 5th edition, 1985.

\bibitem{isola11}
S.~Isola.
\newblock From infinite ergodic theory to number theory (and possibly back).
\newblock {\em Chaos Solitons Fractals}, 44(7):467--479, 2011.

\bibitem{jenkinsonpollicott18}
O.~Jenkinson and M.~Pollicott.
\newblock Joint spectral radius, {S}turmian measures and the finiteness
  conjecture.
\newblock {\em Ergodic Theory Dynam. Systems}, 38(8):3062--3100, 2018.

\bibitem{jordansahlsten16}
T.~Jordan and T.~Sahlsten.
\newblock Fourier transforms of {G}ibbs measures for the {G}auss map.
\newblock {\em Math. Ann.}, 364(3-4):983--1023, 2016.

\bibitem{katok10}
S.~Katok.
\newblock Fuchsian groups, geodesic flows on surfaces of constant negative
  curvature and symbolic coding of geodesics.
\newblock In {\em Homogeneous flows, moduli spaces and arithmetic}, volume~10
  of {\em Clay Math. Proc.}, pages 243--320. Amer. Math. Soc., 2010.

\bibitem{kessebohmerstratmann07}
M.~Kesseb\"{o}hmer and B.~O. Stratmann.
\newblock A multifractal analysis for {S}tern-{B}rocot intervals, continued
  fractions and {D}iophantine growth rates.
\newblock {\em J. Reine Angew. Math.}, 605:133--163, 2007.

\bibitem{kolodziej81}
R.~Ko{\l}odziej.
\newblock An infinite smooth invariant measure for some transformation of a
  circle.
\newblock {\em Bull. Acad. Polon. Sci. S\'{e}r. Sci. Math.},
  29(11-12):549--551, 1981.

\bibitem{lagariaswang95}
J.~C. Lagarias and Y.~Wang.
\newblock The finiteness conjecture for the generalized spectral radius of a
  set of matrices.
\newblock {\em Linear Algebra Appl.}, 214:17--42, 1995.

\bibitem{maclachlanreid03}
C.~Maclachlan and A.~W. Reid.
\newblock {\em The arithmetic of hyperbolic 3-manifolds}.
\newblock Springer, 2003.

\bibitem{miller13}
A.~Miller.
\newblock Trees of integral triangles with given rectangular defect.
\newblock {\em Discrete Math.}, 313(1):50--66, 2013.

\bibitem{misiurewicz81}
M.~Misiurewicz.
\newblock The result of {R}afa{\l} {K}o{\l}odziej.
\newblock In {\em Ergodic theory ({S}em., {L}es {P}lans-sur-{B}ex, 1980)},
  volume~29 of {\em Monograph. Enseign. Math.}, pages 57--60. Univ. Gen\`eve,
  1981.

\bibitem{morita86}
J.~Morita.
\newblock A transformation group of the {P}ythagorean numbers.
\newblock {\em Tsukuba J. Math.}, 10(1):151--153, 1986.

\bibitem{moschella06}
U.~Moschella.
\newblock The de {S}itter and anti-de {S}itter sightseeing tour.
\newblock In {\em Einstein, 1905--2005}, volume~47 of {\em Prog. Math. Phys.},
  pages 120--133. Birkh\"{a}user, 2006.

\bibitem{nomizu82}
K.~Nomizu.
\newblock The {L}orentz-{P}oincar\'{e} metric on the upper half-space and its
  extension.
\newblock {\em Hokkaido Math. J.}, 11(3):253--261, 1982.

\bibitem{panti09}
G.~Panti.
\newblock A general {L}agrange theorem.
\newblock {\em Amer. Math. Monthly}, 116(1):70--74, 2009.

\bibitem{panti18}
G.~Panti.
\newblock Slow continued fractions, transducers, and the {S}erret theorem.
\newblock {\em J. Number Theory}, 185:121--143, 2018.

\bibitem{rockettszusz92}
A.~M. Rockett and P.~Sz{\"u}sz.
\newblock {\em Continued fractions}.
\newblock World Scientific, 1992.

\bibitem{rohlin61}
V.~A. Rohlin.
\newblock Exact endomorphisms of a {L}ebesgue space.
\newblock {\em Amer. Math. Soc. Transl. Ser. 2}, 39:1--36, 1964.
\newblock Originally published in {\em Mat. Izv. Akad. Nauk SSSR Ser. Mat.},
  25:499–530, 1961.

\bibitem{romik08}
D.~Romik.
\newblock The dynamics of {P}ythagorean triples.
\newblock {\em Trans. Amer. Math. Soc.}, 360(11):6045--6064, 2008.

\bibitem{salem43}
R.~Salem.
\newblock On some singular monotonic functions which are strictly increasing.
\newblock {\em Trans. Amer. Math. Soc.}, 53:427--439, 1943.

\bibitem{singerman72}
D.~Singerman.
\newblock Finitely maximal {F}uchsian groups.
\newblock {\em J. London Math. Soc. (2)}, 6:29--38, 1972.

\bibitem{smillieulcigrai10}
J.~Smillie and C.~Ulcigrai.
\newblock Geodesic flow on the {T}eichm\"uller disk of the regular octagon,
  cutting sequences and octagon continued fractions maps.
\newblock In {\em Dynamical numbers---interplay between dynamical systems and
  number theory}, pages 29--65. Amer. Math. Soc., 2010.

\bibitem{takeuchi77a}
K.~Takeuchi.
\newblock Arithmetic triangle groups.
\newblock {\em J. Math. Soc. Japan}, 29(1):91--106, 1977.

\bibitem{thaler83}
M.~Thaler.
\newblock Transformations on {$[0,\,1]$} with infinite invariant measures.
\newblock {\em Israel J. Math.}, 46(1-2):67--96, 1983.

\end{thebibliography}

\end{document}